\documentclass[12pt,a4paper]{article}
\usepackage[dvips]{graphicx}
\usepackage{epsf}
\usepackage{amssymb}
\usepackage{pstricks}
\usepackage{pst-node}
\usepackage{lineno}
\usepackage[normalem]{ulem}

\usepackage{mathrsfs}
\usepackage{amsmath}
\usepackage{amsfonts}

\usepackage{tabularx}
\usepackage{booktabs}
\usepackage{multirow}

\newenvironment{block}[3]{%
  \vspace{2mm}\par%
  \refstepcounter{#2}%
  \noindent\textbf{%
    #1 \thesection.\arabic{#2}.
  }%
}{%
  \par%
}

\newenvironment{thm}[1][]{%
  \begin{block}{\textsc{Theorem}}{theorem}{#1}%
  \slshape%
}{%
  \end{block}%
  \vspace{2mm}%
}
\newenvironment{lem}[1][]{%
  \begin{block}{\textsc{Lemma}}{theorem}{#1}%
  \slshape%
}{%
  \end{block}%
  \vspace{2mm}%
}

\newenvironment{wn}[1][]{%
  \begin{block}{\textsc{Corollary}}{theorem}{#1}%
  \slshape%
}{%
  \end{block}%
  \vspace{2mm}%
}

\newenvironment{obs}[1][]{%
  \begin{block}{\textsc{Observation}}{theorem}{#1}%
  \slshape%
}{%
  \end{block}%
  \vspace{2mm}%
}

\newenvironment{proof}[1][Proof]{%
  \par\noindent\emph{#1}.
}{%
  \eop
  \bigskip%
}

\newenvironment{proof2}[1][Proof]{%
  \par\noindent\emph{#1}.
}{%
  \bigskip%
}

\renewcommand\caption[1]{\small\refstepcounter{figure}%
\begin{center}\textbf{Fig.\ \thefigure .}\ #1\end{center}\normalsize}

\newcommand{\eop}{\hspace*{\fill}\nolinebreak$\Box$\nolinebreak\par}
\newcommand{\later}[1]{{}}
\newcommand{\old}[1]{{}}

\newrgbcolor{gray5}{0.5 0.5 0.5}
\newrgbcolor{gray7}{0.7 0.7 0.7}
\newrgbcolor{gray9}{0.9 0.9 0.9}

\def\cer{{\rm cer}}
\def\c{{\rm c}}
\def\supp{s}
\def\leaf{l}

\textwidth 6.3in
\textheight 9.4in
\topmargin -0.2in
\oddsidemargin .0in


\title{{\textsc{\Certified domination}}}
\date{\today}
\begin{document}

\begin{center}
\noindent\textbf{\Large\uppercase{Certified domination}}
\end{center}

\vspace{10mm}
\noindent\textsc{Magda Dettlaff, Magdalena Lema\'{n}ska}\\[1mm]
Gda\'nsk University of Technology, 80-233 Gda\'nsk, Poland\\
{\small \texttt{\{mdettlaff,magda\}@mif.pg.gda.pl}}\\[3mm]
\textsc{Jerzy Topp, Rados\l{}aw Ziemann, Pawe\l{}~\.Zyli\'nski}\\[1mm]
University of Gda\'{n}sk, 80-952 Gda\'nsk, Poland\\
{\small \texttt{\{j.topp,rziemann,zylinski\}@inf.ug.edu.pl}}

\begin{abstract}
\noindent Imagine that we are given a set $D$ of officials and a set $W$ of civils.
For each civil $x \in W$, there must be an official $v \in D$
that can serve~$x$, and whenever any such $v$ is serving~$x$,
there must also be another civil $w \in W$ that
observes~$v$, that is, $w$ may act as a kind of witness,
to avoid any abuse from $v$. What is the minimum number of officials
to guarantee such a service, assuming a given social network?

In this paper, we introduce the concept of certified domination
that perfectly models the aforementioned problem.
Specifically, a~dominating set $D$ of a graph $G=(V_G,E_G)$ is said to be certified
if every vertex in $D$ has either zero or at least two neighbours in
$V_G\setminus D$. The cardinality of a~minimum certified dominating set in
$G$ is called the certified domination number of $G$.
Herein, we present the exact values of the
certified domination number for some classes of graphs
as well as provide some upper bounds on this parameter for arbitrary graphs.
We then characterise a wide class of graphs with equal domination
and certified domination numbers and characterise graphs with large values
of certified domination numbers. Next,
we examine the effects on the certified domination number
when the graph is modified by deleting/adding an edge or a vertex.
We also provide Nordhaus-Gaddum
type inequalities for the certified domination number.
Finally, we show that the (decision) certified domination problem
is NP-complete.
As a side result,
we characterise a wider class of $D\!D_2$-graphs,
thus generalizing a result of~\cite{HR13}.

\medskip
\noindent{\bf Keywords:}  Certified domination, domination, corona, Nordhaus-Gaddum, $D\!D_2$-graph.\\
{\bf \AmS \; Subject Classification:} 05C05, 05C
\end{abstract}

\section{Introduction}
Imagine that we are given a set $D$ of officials and a set $W$ of civils.
For each civil $x \in W$, there must be an official $v \in D$
that can serve~$x$, and whenever any such $v$ is serving~$x$,
there must also be another civil $w \in W$ that
observes~$v$, that is, $w$ may act as a kind of witness,
to avoid any abuse from $v$. What is the minimum number of officials
to guarantee such a service, assuming a given social network?

The aforementioned problem motivates us
introducing the concept of certified domination.
Specifically, let $D$ be a subsets of the vertex set of a graph $G=(V_G,E_G)$.
We say that $D$ {\em dominates} $G$ (or is a {\em dominating set} of $G$) if each vertex in the set $V_G \setminus D$ has a~neighbour in $D$.
The cardinality of a~minimum dominating set in $G$ is called the~{\em
domination number of $G$} and denoted by $\gamma(G)$,
and any minimum dominating set of $G$ is called a {\em $\gamma$-set}.
A dominating set $D$ of $G$ is called {\em certified}
if every vertex $v \in D$ has either zero or at least two neighbours in
$V_G\setminus D$.
The cardinality of a~minimum certified dominating set in $G$ is called the {\em
certified domination number of $G$} and denoted
by $\gamma_\cer(G)$.
A~minimum certified dominating set of $G$ is called a
{\em $\gamma_\cer$-set}.
Notice that, by the definition, $V_G$
is a~certified dominating set of $G$, and certainly 
$1\leq \gamma_{\cer}(G)\leq |V_G|$. Furthermore, one can observe that
$\gamma_{\cer}(G)\neq |V_G|-1$.

There is a wealth of literature about
domination and its variations in graphs;
we refer to the excellent books of Haynes,
Hedetniemi, and Slater~\cite{HHS98, HHS98b}. 
The domination concept we introduce perfectly fits into that area where, for a given graph $G$,
domination parameters are defined by imposing additional constraints
on a dominating set $D$ or its complement $V_G \setminus D$.
This area includes, to mention but a few,
the multiple domination, the distance domination, or  the global domination. 
In particular,
the problem of certified domination is closely related
to the problem of
existence a $DD_2$-pair in a graph, introduced by Henning and Rall in~\cite{HR13}.
Recall, a set $X \subseteq V_G$ of vertices is {\em $2$-dominating} in~$G$
if it is a dominating set of $G$ and every vertex in $V_G\setminus X$
has at least two 
neighbours in $X$~\cite{FJ85a,FJ85b}.
A {\em $D\!D_2$-pair} of $G$ is a pair $(D, D_2)$ of disjoint sets of vertices of $G$ such that
$D$ is a dominating set of $G$ and $D_2$ is a $2$-dominating set of $G$;
a graph that has a $D\!D_2$-pair is called a {\em $D\!D_2$-graph}.
One can observe that if $G$ has a $D\!D_2$-pair $(D,D_2)$, then
the set $D$ is a certified dominating set. 
However, there are graphs $G$ with
$\gamma_\cer(G) < |D|$ for any $(D,D_2)$-pair in $G$ (if any),
see~Fig.~\ref{fig:gammacer_vs_DD2} for an illustration.

\begin{figure}[h!]
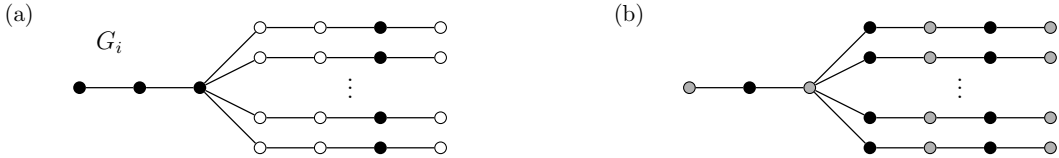

\begin{center}

\pspicture(0,-0.2)(8,2.4)\scalebox{0.8}{

\cnode*[linewidth=0.5pt,fillstyle=solid,fillcolor=white,linecolor=black](0,1){3pt}{v1}
\cnode*[linewidth=0.5pt,fillstyle=solid,fillcolor=white,linecolor=black](1,1){3pt}{v2}
\cnode*[linewidth=0.5pt,fillstyle=solid,fillcolor=white,linecolor=black](2,1){3pt}{v3}

\cnode[linewidth=0.5pt,fillstyle=solid,fillcolor=white,linecolor=black](3,2){3pt}{x1}
\cnode[linewidth=0.5pt,fillstyle=solid,fillcolor=white,linecolor=black](4,2){3pt}{x2}
\cnode*[linewidth=0.5pt,fillstyle=solid,fillcolor=white,linecolor=black](5,2){3pt}{x3}
\cnode[linewidth=0.5pt,fillstyle=solid,fillcolor=white,linecolor=black](6,2){3pt}{x4}

\cnode[linewidth=0.5pt,fillstyle=solid,fillcolor=white,linecolor=black](3,1.5){3pt}{y1}
\cnode[linewidth=0.5pt,fillstyle=solid,fillcolor=white,linecolor=black](4,1.5){3pt}{y2}
\cnode*[linewidth=0.5pt,fillstyle=solid,fillcolor=white,linecolor=black](5,1.5){3pt}{y3}
\cnode[linewidth=0.5pt,fillstyle=solid,fillcolor=white,linecolor=black](6,1.5){3pt}{y4}

\cnode[linewidth=0.5pt,fillstyle=solid,fillcolor=white,linecolor=black](3,0.5){3pt}{z1}
\cnode[linewidth=0.5pt,fillstyle=solid,fillcolor=white,linecolor=black](4,0.5){3pt}{z2}
\cnode*[linewidth=0.5pt,fillstyle=solid,fillcolor=white,linecolor=black](5,0.5){3pt}{z3}
\cnode[linewidth=0.5pt,fillstyle=solid,fillcolor=white,linecolor=black](6,0.5){3pt}{z4}

\cnode[linewidth=0.5pt,fillstyle=solid,fillcolor=white,linecolor=black](3,0){3pt}{w1}
\cnode[linewidth=0.5pt,fillstyle=solid,fillcolor=white,linecolor=black](4,0){3pt}{w2}
\cnode*[linewidth=0.5pt,fillstyle=solid,fillcolor=white,linecolor=black](5,0){3pt}{w3}
\cnode[linewidth=0.5pt,fillstyle=solid,fillcolor=white,linecolor=black](6,0){3pt}{w4}

\ncline[linewidth=0.6pt]{v2}{v1}
\ncline[linewidth=0.6pt]{v2}{v3}
\ncline[linewidth=0.6pt]{x1}{v3}
\ncline[linewidth=0.6pt]{y1}{v3}
\ncline[linewidth=0.6pt]{z1}{v3}
\ncline[linewidth=0.6pt]{w1}{v3}

\ncline[linewidth=0.6pt]{x1}{x2}
\ncline[linewidth=0.6pt]{x3}{x2}
\ncline[linewidth=0.6pt]{x3}{x4}

\ncline[linewidth=0.6pt]{z1}{z2}
\ncline[linewidth=0.6pt]{z3}{z2}
\ncline[linewidth=0.6pt]{z3}{z4}

\ncline[linewidth=0.6pt]{y1}{y2}
\ncline[linewidth=0.6pt]{y3}{y2}
\ncline[linewidth=0.6pt]{y3}{y4}

\ncline[linewidth=0.6pt]{w1}{w2}
\ncline[linewidth=0.6pt]{w3}{w2}
\ncline[linewidth=0.6pt]{w3}{w4}

\rput(.5,1.75){$G_i$}

\rput(4.5,1.1){$\vdots$}

\rput(-1,2.2){\small (a)}
}
\endpspicture
\pspicture(0,-0.2)(4.8,1)\scalebox{0.8}{

\cnode[linewidth=0.5pt,fillstyle=solid,fillcolor=gray7,linecolor=black](0,1){3pt}{v1}
\cnode*[linewidth=0.5pt,fillstyle=solid,fillcolor=white,linecolor=black](1,1){3pt}{v2}
\cnode[linewidth=0.5pt,fillstyle=solid,fillcolor=gray7,linecolor=black](2,1){3pt}{v3}

\cnode*[linewidth=0.5pt,fillstyle=solid,fillcolor=white,linecolor=black](3,2){3pt}{x1}
\cnode[linewidth=0.5pt,fillstyle=solid,fillcolor=gray7,linecolor=black](4,2){3pt}{x2}
\cnode*[linewidth=0.5pt,fillstyle=solid,fillcolor=white,linecolor=black](5,2){3pt}{x3}
\cnode[linewidth=0.5pt,fillstyle=solid,fillcolor=gray7,linecolor=black](6,2){3pt}{x4}

\cnode*[linewidth=0.5pt,fillstyle=solid,fillcolor=white,linecolor=black](3,1.5){3pt}{y1}
\cnode[linewidth=0.5pt,fillstyle=solid,fillcolor=gray7,linecolor=black](4,1.5){3pt}{y2}
\cnode*[linewidth=0.5pt,fillstyle=solid,fillcolor=white,linecolor=black](5,1.5){3pt}{y3}
\cnode[linewidth=0.5pt,fillstyle=solid,fillcolor=gray7,linecolor=black](6,1.5){3pt}{y4}

\cnode*[linewidth=0.5pt,fillstyle=solid,fillcolor=white,linecolor=black](3,0.5){3pt}{z1}
\cnode[linewidth=0.5pt,fillstyle=solid,fillcolor=gray7,linecolor=black](4,0.5){3pt}{z2}
\cnode*[linewidth=0.5pt,fillstyle=solid,fillcolor=white,linecolor=black](5,0.5){3pt}{z3}
\cnode[linewidth=0.5pt,fillstyle=solid,fillcolor=gray7,linecolor=black](6,0.5){3pt}{z4}

\cnode*[linewidth=0.5pt,fillstyle=solid,fillcolor=white,linecolor=black](3,0){3pt}{w1}
\cnode[linewidth=0.5pt,fillstyle=solid,fillcolor=gray7,linecolor=black](4,0){3pt}{w2}
\cnode*[linewidth=0.5pt,fillstyle=solid,fillcolor=white,linecolor=black](5,0){3pt}{w3}
\cnode[linewidth=0.5pt,fillstyle=solid,fillcolor=gray7,linecolor=black](6,0){3pt}{w4}

\ncline[linewidth=0.6pt]{v2}{v1}
\ncline[linewidth=0.6pt]{v2}{v3}
\ncline[linewidth=0.6pt]{x1}{v3}
\ncline[linewidth=0.6pt]{y1}{v3}
\ncline[linewidth=0.6pt]{z1}{v3}
\ncline[linewidth=0.6pt]{w1}{v3}

\ncline[linewidth=0.6pt]{x1}{x2}
\ncline[linewidth=0.6pt]{x3}{x2}
\ncline[linewidth=0.6pt]{x3}{x4}

\ncline[linewidth=0.6pt]{z1}{z2}
\ncline[linewidth=0.6pt]{z3}{z2}
\ncline[linewidth=0.6pt]{z3}{z4}

\ncline[linewidth=0.6pt]{y1}{y2}
\ncline[linewidth=0.6pt]{y3}{y2}
\ncline[linewidth=0.6pt]{y3}{y4}

\ncline[linewidth=0.6pt]{w1}{w2}
\ncline[linewidth=0.6pt]{w3}{w2}
\ncline[linewidth=0.6pt]{w3}{w4}

\rput(4.5,1.1){$\vdots$}

\rput(-1,2.2){\small (b)}
}
\endpspicture
\caption{The family of graphs $G_i$. (a) Black vertices form a certified dominating set $D_\c$ with $|D_\c|=i+3$, $i \ge 2$.
(b) Black and grey vertices form a~$(D,D_2)$-pair, respectively, with $|D|=2i+1$. Observe that
if $i \ge 3$, then $G_i$ has no $(D,D_2)$-pair with $|D| \le i+3$.}\label{fig:gammacer_vs_DD2}
\end{center}
\end{figure}

\vspace{-10mm}
\paragraph{Organization of the paper.}
In Section~\ref{sec:Pre}, we present the exact values of the
certified domination number for some elementary classes of graphs.
Some upper bounds on this new parameter for an arbitrary graph are presented in Section~\ref{sec:upper}.
Then, in Section~\ref{sec:gammacer_gamma} and Section~\ref{sec:gammacer_large}, respectively,
we characterise a wide class of graphs with equal domination
and certified domination numbers and characterise graphs with large values
of certified domination numbers.
Next, in Section~\ref{sec:influence},
we examine the effects on the certified domination number
when the graph is modified by deleting/adding an edge or a vertex.
Finally, Section~\ref{sec:NG} is devoted to Nordhaus-Gaddum
type inequalities for the certified domination number, while
in Section~\ref{sec:NP-hardness},
we show that the (decision) certified domination problem
is \mbox{NP-complete}.
In addition, as a side result, in Section~\ref{sec:DD2},
we characterise a wider class of $D\!D_2$-graphs,
thus generalizing a result of~\cite{HR13}.

\subsection{Definitions and notation}
For general graph theory terminology, we follow~\cite{Die12}.
In particular,
for a vertex $v$ of a~graph $G=(V_G,E_G)$,
its {\em $($open$)$ neighbourhood\/}, denoted by $N_{G}(v)$, is the set of all vertices adjacent to $v$,
and the cardinality of $N_G(v)$, denoted by $\deg_G(v)$, is called the {\em degree} of~$v$.
The {\em closed neighbourhood\/} of $v$, denoted by $N_{G}[v]$, is the set $N_{G}(v)\cup \{v\}$.
In general, for a subset $X\subseteq V_G$ of vertices,
 the {\em $($open$)$ neighbourhood\/} of $X$, denoted by $N_{G}(X)$,
is defined to be $\bigcup_{v\in X}N_{G}(v)$, and the {\em closed\/} neighbourhood of $X$,
denoted by $N_{G}[X]$, is the set $N_{G}(X)\cup X$.
The minimum and maximum degree of a vertex in $G$ is denoted by $\delta(G)$ and $\Delta(G)$, respectively.
A vertex of degree $|V_G|-1$ is called a {\em universal} vertex of $G$.
 A vertex of degree one is called a {\em leaf}, and the only neighbour of a leaf is called its {\em support vertex} (or simply, its {\em support}).
 If a support vertex
 has at least two leaves as neighbours,  we call it a {\em strong\/} support, 
 otherwise it is a {\em weak} support. 
The set of leaves of $G$ is denoted by $L_G$.
For a leaf $v \in L_G$, its support vertex is denoted by $\supp_G(v)$,
and for a weak support $v$, the unique leaf adjacent to $v$
is denoted by $\leaf_G(v)$.
The set of weak supports of $G$ is denoted by $S_1(G)$, while
the set of strong supports of $G$ is denoted by $S_2(G)$.

\section{Elementary graph classes}\label{sec:Pre}
We begin by presenting the exact values of the
certified domination number for some elementary classes of graphs.
\begin{obs}\label{obs:Pn}
If $P_n$ is an $n$-vertex path, then
\[\gamma_{\cer}(P_n)= \left\{\begin{array}{cl}
1 & \mbox{if $n=1$ or $n=3$;}\\
2 & \mbox{if $n=2$;}\\
4 & \mbox{if $n=4$;}\\
\lceil \frac{n}{3}\rceil & \mbox{otherwise.} \end{array}\right.\]
\end{obs}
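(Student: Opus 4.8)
The approach is to handle the small cases $n \le 4$ by direct inspection, and then establish the formula $\gamma_{\cer}(P_n) = \lceil n/3 \rceil$ for $n \ge 5$ (noting that $n=3$ also fits this formula, since $\lceil 3/3 \rceil = 1$, but $n=4$ does not). Throughout, label the vertices of $P_n$ as $v_1, v_2, \dots, v_n$ in path order. The proof splits naturally into a lower bound and an upper bound.

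For the upper bound when $n \ge 5$, I would exhibit an explicit certified dominating set. Recall the standard $\gamma$-set of $P_n$ consists of the vertices $v_2, v_5, v_8, \dots$, i.e.\ every third vertex starting from $v_2$; this has size $\lceil n/3 \rceil$ and dominates $P_n$. The point is to check it is \emph{certified}: each chosen vertex $v_{3k+2}$ has neighbours $v_{3k+1}$ and $v_{3k+3}$, both outside the set, so it has exactly two outside neighbours — \emph{provided} these neighbours actually exist in the path. The only chosen vertex for which this can fail is the last one, if it happens to be $v_{n-1}$ or $v_n$ (an endpoint or next-to-endpoint). I would treat the three residue classes of $n$ modulo $3$ separately: when $n \equiv 2 \pmod 3$ the last chosen vertex is $v_{n-1}$, which has only one outside neighbour $v_n$, so I adjust by using $v_{n-2}$ instead (still dominating, since $v_{n-2}$ covers $v_{n-1}$ and then $v_n$ must be picked up — here one must be slightly careful, perhaps instead shifting the whole pattern or adding/swapping a vertex so the count stays $\lceil n/3\rceil$); when $n \equiv 0$ or $n \equiv 1 \pmod 3$ similar endpoint fixes apply. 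The cleanest route is probably: for $n \ge 5$, partition $\{v_1,\dots,v_n\}$ into $\lceil n/3\rceil$ consecutive blocks of size $3$ (with the last block possibly short), pick the middle vertex of each full block, and handle the short last block by a small case analysis, verifying in each case that every picked vertex has $0$ or $\ge 2$ outside neighbours and that domination holds.

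For the lower bound, $\gamma_{\cer}(P_n) \ge \lceil n/3 \rceil$ is immediate since $\gamma_{\cer}(G) \ge \gamma(G)$ always, and $\gamma(P_n) = \lceil n/3 \rceil$ is classical; so no extra work is needed there. The small cases: $P_1$ has $\gamma_{\cer}=1$ trivially; $P_2$ needs both vertices (a single vertex has exactly one outside neighbour, violating the certified condition, and also $\gamma_{\cer} \neq |V_G|-1 = 1$), giving $2$; $P_3$ has $\{v_2\}$ as a certified dominating set since $v_2$ has two outside neighbours $v_1, v_3$, giving $1$; $P_4$ is the interesting small case — one checks $\{v_2\}$ and $\{v_3\}$ fail to dominate, any $2$-element dominating set such as $\{v_1,v_3\}$ or $\{v_2,v_3\}$ fails the certified condition (e.g.\ in $\{v_2,v_3\}$, $v_2$ has the single outside neighbour $v_1$), any $3$-element set would force $\gamma_{\cer}=|V_G|-1$ which is impossible, so $\gamma_{\cer}(P_4)=4$.

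The main obstacle is purely bookkeeping: getting the endpoint corrections in the upper-bound construction right so that the cardinality remains exactly $\lceil n/3\rceil$ in every residue class while both the domination and the certified conditions are simultaneously satisfied. This is entirely routine but must be done carefully case by case on $n \bmod 3$ (with a separate small check for, say, $n \in \{5,6,7\}$ to anchor the pattern), and it is the only place where anything could go wrong.
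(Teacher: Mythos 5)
Your proposal is essentially correct, and since the paper states this result as an Observation with no proof at all, your argument (lower bound from $\gamma_{\cer}\ge\gamma=\lceil n/3\rceil$, explicit constructions for the upper bound, exhaustive check of $n\le 4$) is exactly the standard one the authors are implicitly relying on. The small cases are handled correctly, including the key point that $P_4$ admits no certified dominating set of size $2$ and that size $3=n-1$ is excluded.

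One detail in your bookkeeping is off, though it does not sink the plan: you assign the troublesome residue class incorrectly. For $n\equiv 2\pmod 3$, $n=3k+2$, the set $\{v_2,v_5,\dots,v_{3k-1},v_{3k+1}\}$ already works with no further adjustment, since $v_{3k+1}=v_{n-1}$ has both neighbours $v_{3k}$ and $v_{3k+2}$ outside the set. The genuinely delicate class is $n\equiv 1\pmod 3$, $n=3k+1$: there the natural one-vertex fixes both fail (adding $v_n$ gives a leaf with one outside neighbour; adding $v_{3k}$ gives a vertex whose neighbour $v_{3k-1}$ is already in the set, leaving only one outside neighbour), so you must respace the tail, e.g.\ take $\{v_2,v_5,\dots,v_{3k-4},v_{3k-2},v_{3k}\}$, which is certified and dominating of size $k+1$ for $k\ge 2$ --- and the failure of this respacing at $k=1$ is precisely why $P_4$ is exceptional. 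With that correction the case analysis closes, and your proposed anchor checks at $n\in\{5,6,7\}$ would have caught it.
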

\begin{obs}\label{obs:Cn}
If $C_n$ is an $n$-vertex cycle, $n \ge 3$, then
$\gamma_{\cer}(C_n)=\lceil \frac{n}{3}\rceil$.
\end{obs}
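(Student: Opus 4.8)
The strategy is to establish the two inequalities separately. For the upper bound, I would exhibit an explicit certified dominating set of size $\lceil n/3 \rceil$. Label the vertices of $C_n$ as $v_0, v_1, \dots, v_{n-1}$ cyclically, and take $D = \{v_0, v_3, v_6, \dots\}$, i.e. every third vertex, with a small adjustment near the ``wrap-around'' when $n$ is not divisible by $3$. One checks that $D$ dominates $C_n$ (standard, since consecutive chosen vertices are at distance at most $3$) and that every $v \in D$ has both of its two neighbours outside $D$ — this holds precisely because no two chosen vertices are adjacent, which is the case when the gaps between consecutive members of $D$ are all $2$ or $3$. I would verify that such a spacing is achievable for every $n \ge 3$: writing $n = 3q + r$ with $r \in \{0,1,2\}$, use $q$ gaps of size $3$ when $r=0$ (total $q = \lceil n/3\rceil$ vertices), $q+1$ vertices with one gap of size $2$ when $r = 1$, and $q+1$ vertices with one gap of size $2$ (or two gaps of size $2$) when $r=2$; in each case $|D| = \lceil n/3 \rceil$ and no gap equals $1$.

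For the lower bound, I would use the fact that $\gamma(C_n) = \lceil n/3 \rceil$ is classical, together with the trivial observation $\gamma_{\cer}(G) \ge \gamma(G)$ since every certified dominating set is in particular a dominating set. This immediately gives $\gamma_{\cer}(C_n) \ge \lceil n/3 \rceil$, and combined with the upper bound yields equality. I should note here the contrast with Observation~\ref{obs:Pn}, where $P_4$ is an exception: in a cycle there is no analogue of the ``endpoint'' obstruction that forces $\gamma_{\cer}(P_4) = 4 > 2 = \gamma(P_4)$, because in $C_n$ every vertex of degree $2$ can always be given a spacing that keeps its two neighbours out of $D$.

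The only point needing genuine care is confirming that a gap pattern with all gaps in $\{2,3\}$ exists for $C_n$ for every $n \ge 3$ and has exactly $\lceil n/3 \rceil$ parts — equivalently, that $n$ can be written as a sum of $\lceil n/3 \rceil$ terms each equal to $2$ or $3$. This is an elementary number-theoretic check: if $n = 3q$ use $q$ threes; if $n = 3q+1 = 3(q-1) + 4$ use $q-1$ threes and two twos, giving $q+1 = \lceil n/3\rceil$ parts; if $n = 3q + 2$ use $q$ threes and one two, giving $q+1 = \lceil n/3\rceil$ parts. (The cases $n = 3,4,5$ are covered directly by this.) Thus the main ``obstacle'' is merely bookkeeping on residues mod $3$, and no deeper argument is required.
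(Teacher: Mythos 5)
Your argument is correct: the lower bound $\gamma_{\cer}(C_n)\ge\gamma(C_n)=\lceil n/3\rceil$ is immediate, and your gap-$\{2,3\}$ construction gives a dominating set in which every chosen vertex has both neighbours outside $D$, hence is certified. The paper states this as an observation without proof, and your write-up is exactly the standard argument one would supply, so there is nothing to contrast.
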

\begin{obs}\label{obs:Kn}
If $K_n$ is an $n$-vertex complete graph, then
\[\gamma_{\cer}(K_n)= \left\{\begin{array}{cl}
1 & \mbox{if $n=1$ or $n \ge 3$;}\\
2 & \mbox{if $n=2$.} \end{array}\right.\]
\end{obs}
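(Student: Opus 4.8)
The plan is to settle each of the three ranges of $n$ by exhibiting an explicit certified dominating set of the claimed size and then arguing minimality from the elementary bounds $1\le\gamma_\cer(G)$ and $\gamma_\cer(G)\ne|V_G|-1$ already recorded in the introduction.

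For $n=1$, I would simply observe that $V_{K_1}$ is itself a certified dominating set: its unique vertex dominates $K_1$ and has zero neighbours outside the set, so $\gamma_\cer(K_1)=1$. For $n\ge 3$, I would pick any vertex $v$ and take $D=\{v\}$. Since $K_n$ is complete, $v$ is adjacent to all of the remaining $n-1\ge 2$ vertices, so $D$ dominates $K_n$ and $v$ has $n-1\ge 2$ neighbours in $V_{K_n}\setminus D$; hence $D$ is certified and $\gamma_\cer(K_n)\le 1$, which together with $\gamma_\cer(K_n)\ge 1$ forces $\gamma_\cer(K_n)=1$.

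The only case requiring a moment's thought is $n=2$. Here a singleton $\{v\}$ does dominate $K_2$, but $v$ has exactly one neighbour in the complement, violating the ``zero or at least two'' requirement, so no one-element set is certified and $\gamma_\cer(K_2)\ge 2$. Taking $D=V_{K_2}$, both vertices have zero neighbours outside $D$, so $D$ is certified and $\gamma_\cer(K_2)\le 2$; hence $\gamma_\cer(K_2)=2$. There is no genuine obstacle in any of this; the single point to be careful about is not to overlook that the single-vertex set, although a minimum \emph{dominating} set of $K_2$, fails to be \emph{certified}.
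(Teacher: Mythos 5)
Your argument is correct and is exactly the elementary case analysis the paper intends (it states this as an observation without proof): a singleton is certified in $K_n$ precisely when $n=1$ or $n\ge 3$, while for $n=2$ the singleton is half-shadowed and the whole vertex set must be taken. Nothing is missing.
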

\begin{obs}\label{obs:Knm}
If $K_{m,n}$ is a complete bipartite graph with $1\le m \le n$, then
\[\gamma_{\cer}(K_{m,n})= \left\{\begin{array}{cl}
1 & \mbox{if $m=1$ and $n>1$;}\\
2 & \mbox{otherwise.} \end{array}\right.\]
\end{obs}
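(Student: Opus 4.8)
The plan is to verify the formula for $\gamma_\cer(K_{m,n})$ by splitting on the two cases of the statement, relying on the fact that a certified dominating set $D$ must dominate $G$ and that each vertex of $D$ has either zero or at least two neighbours outside $D$.

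First, suppose $m=1$ and $n>1$, so $K_{1,n}$ is a star with centre $c$ and leaves $\ell_1,\dots,\ell_n$. I would take $D=\{c\}$: it clearly dominates, and since $n\ge 2$, the vertex $c$ has $n\ge 2$ neighbours in $V_G\setminus D$, so $D$ is certified. Hence $\gamma_\cer(K_{1,n})\le 1$, and since always $\gamma_\cer(G)\ge 1$, equality follows.

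Next, suppose we are not in that case, i.e.\ either $m=n=1$ (so $K_{1,1}=P_2$) or $m\ge 2$. If $m=n=1$, then $K_{1,1}=K_2$, and Observation~\ref{obs:Kn} (or directly the remark that $\gamma_\cer(G)\ne |V_G|-1$, together with $\gamma_\cer\ge 1$) gives $\gamma_\cer=2$. Now assume $m\ge 2$, and let $A$, $B$ be the parts with $|A|=m$, $|B|=n$. For the upper bound, pick one vertex $a\in A$ and one vertex $b\in B$ and set $D=\{a,b\}$; this dominates $K_{m,n}$, and since $n\ge m\ge 2$, the vertex $a$ has $n-1\ge 1$\,--- in fact we need at least two, and indeed $a$ has $n-1\ge m-1\ge 1$ neighbours in $B\setminus\{b\}$; here we must be a little careful, so I would instead choose $D$ more cleverly, e.g.\ $D=A$ when this works, or $D=\{a,b\}$ and check: $a$'s neighbours outside $D$ are $B\setminus\{b\}$, of size $n-1\ge 1$, which is $\ge 2$ only if $n\ge 3$. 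The clean choice is $D=\{a_1,a_2\}\subseteq A$ if that dominated, but it does not dominate the rest of $A$. So the right witness is: if $m\ge 2$ and $n\ge 2$, take $D=\{a,b\}$; then $a$ has $n-1$ neighbours outside $D$ and $b$ has $m-1$ neighbours outside $D$, and since both $m,n\ge 2$ we get $m-1,n-1\ge 1$; to force $\ge 2$ one instead observes that when $m-1=1$ (i.e.\ $m=2$) we may enlarge, but note $|V_G|=m+n\ge 4$ so $\gamma_\cer\le |V_G|-2$ is not automatically $2$. The cleanest argument: $D=\{a,b\}$ with $a\in A,b\in B$ works directly because $a$'s outside-neighbours are $B\setminus\{b\}$ and $b$'s are $A\setminus\{a\}$; if $n\ge 3$ and $m\ge 3$ both are $\ge 2$; if $m=2$ take $D=\{b,b'\}\cup\{a\}$? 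This is getting delicate, so in the write-up I would simply pick, for $m\ge 2$, the set $D=A$: it dominates (every vertex of $B$ sees all of $A$), and each $a\in A$ has all $n\ge 2$ vertices of $B$ as neighbours outside $D$, so $D$ is certified and $\gamma_\cer\le m$. But we want $\le 2$, so instead: for $m\ge 2,n\ge 2$, set $D=\{a\}\cup B$? No. The correct small witness is $D=\{a,b\}$ together with the observation that if one of $m,n$ equals $2$ we still only need each vertex of $D$ to have $0$ \emph{or} $\ge 2$ outside-neighbours; since $a\in D$ has $n-1$ and $b\in D$ has $m-1$ outside-neighbours, and $m-1\ge1$, this fails the ``$\ge 2$'' test precisely when $m=2$. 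So I take $D=\{a,b,b'\}$ with $b,b'\in B$ when $m=2<n$: then $a$ has $n-2\ge 1$ outside-neighbours (need $\ge 2$, so need $n\ge 4$)\,--- still delicate. Hence I will organize the upper bound by the honest case split $m=n=2$ versus $m\ge 2, n\ge 3$ versus $m=2,n=2$, etc., but the cleanest uniform fact is: for $m\ge 2$, $D=\{a,b\}$ is certified \emph{unless} it creates a vertex with exactly one outside-neighbour; checking shows this only happens for $K_{2,2}$, where one verifies $\gamma_\cer(K_{2,2})=2$ separately by exhibiting $D=V_G$ minus nothing\,--- actually $K_{2,2}=C_4$, so $\gamma_\cer=\lceil 4/3\rceil=2$ by Observation~\ref{obs:Cn}. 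For $K_{2,n}$ with $n\ge 3$, take $D=B$ (size $n$)? too big. I would instead present: $D=\{a,b\}\cup\{$one extra vertex of the part of size $\ge 3\}$ and verify; the honest statement is that for all $m\ge 2$ there is a certified dominating set of size exactly $2$ except this needs a short case analysis, which I expect to be the main (minor) obstacle.

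For the lower bound in the non-star case, I would argue $\gamma_\cer(K_{m,n})\ge 2$ simply because a single vertex cannot be certified-dominating: if $D=\{a\}$ with $a\in A$, then $a$ dominates $B$ but not $A\setminus\{a\}$ (nonempty since $m\ge 2$), so $D$ is not even dominating; symmetrically for $D=\{b\}$. Combined with the upper bound, $\gamma_\cer(K_{m,n})=2$ in all remaining cases. The only genuinely fiddly point, as noted, is producing a size-$2$ certified dominating set of $K_{2,2}$ and of $K_{2,n}$; for the former use $K_{2,2}\cong C_4$ and Observation~\ref{obs:Cn}, and for $K_{2,n}$ with $n\ge 3$ note $\{b_1,b_2\}\subseteq B$ dominates $A$ but not $B$; so instead $\{a_1,b_1\}$ with $a_1\in A$: $a_1$ has outside-neighbours $\{b_2,\dots,b_n\}$ of size $n-1\ge 2$ and $b_1$ has outside-neighbour $a_2$ only, size $1$\,--- fails; so take $\{a_1,a_2\}\cup\{b_1\}$: $a_i$ has outside-neighbours $B\setminus\{b_1\}$, size $n-1\ge 2$, and $b_1$ has zero outside-neighbours, so this is certified of size $3$, not $2$. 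Therefore for $K_{2,n}$, $n\ge3$, I claim $\gamma_\cer=2$ via $D=\{b_1,b_2\}$? That does not dominate $B\setminus\{b_1,b_2\}$. I conclude $\gamma_\cer(K_{2,n})$ needs $D$ to hit both sides or be one full side; the minimum such certified set has size $2$ only if $\{a_1,b_1\}$ is certified, which requires $m-1\ge2$, i.e.\ $m\ge3$. So in fact for $m\ge 3,n\ge 3$ the witness $\{a_1,b_1\}$ works cleanly, for $m=2$ one checks $K_{2,n}$ directly (and I expect $\gamma_\cer(K_{2,n})=2$ via a set like $\{a_1,a_2\}$ when... no, that leaves $B$ undominated only if $n>0$, but $\{a_1,a_2\}=A$ dominates all of $B$ and $A\setminus A=\emptyset$, so $A$ dominates $K_{2,n}$ and each $a_i$ has $n\ge 2$ outside-neighbours)\,--- yes! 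So $D=A$ has size $m=2$ and is certified. Thus the clean uniform witness for $m\ge 2$ is $D=A$ when $m=2$, and $D=\{a_1,b_1\}$ when $m,n\ge 3$, and $D=A$ ($|A|=2$) also covers $m=2=n$; writing it this way dispatches every case, and the lower bound $\gamma_\cer\ge 2$ is immediate as above.
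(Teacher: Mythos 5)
Your final argument is correct and is exactly the direct verification the paper leaves implicit (the observation is stated without proof): the lower bound $\gamma_\cer\ge 2$ because no single vertex dominates when $m\ge 2$ (and $K_{1,1}$ fails the certification condition), together with the witnesses $D=A$ when $m=2$ and $D=\{a,b\}$ when $m,n\ge 3$. Do condense the write-up to just those two witnesses and the lower-bound remark, discarding the exploratory false starts.
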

\begin{obs}\label{obs:Wn}
If $W_n$ is an $n$-vertex wheel, then $\gamma_\cer(W_n)=1$.
\end{obs}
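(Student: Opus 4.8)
The plan is to argue directly from the structure of the wheel. Recall that $W_n$ is obtained from a cycle $C_{n-1}$ on vertices $v_1,\dots,v_{n-1}$ by adding a hub vertex $u$ adjacent to all of $v_1,\dots,v_{n-1}$; thus $u$ is a universal vertex of $W_n$. Since $1\le\gamma_{\cer}(W_n)$ always holds, it suffices to exhibit a certified dominating set of size one.

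First I would take $D=\{u\}$. Since $u$ is universal, every vertex of $V_{W_n}\setminus D$ is adjacent to $u$, so $D$ is a dominating set. It remains to check the certified condition for the single vertex $u\in D$: the number of neighbours of $u$ in $V_{W_n}\setminus D$ is $\deg_{W_n}(u)=n-1$, which is either $0$ (when $n=1$, the degenerate wheel consisting of a single vertex) or at least $2$ (when $n\ge 3$, the smallest genuine wheel being $W_4$ on the cycle $C_3$). In the borderline case $n=3$, note $W_3=K_3$ and the hub still has degree $2$, so the condition holds. Hence $D=\{u\}$ is a certified dominating set, giving $\gamma_{\cer}(W_n)\le 1$, and combined with $\gamma_{\cer}(W_n)\ge 1$ we obtain $\gamma_{\cer}(W_n)=1$.

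There is essentially no obstacle here; the only point requiring a moment's care is the convention on small wheels (what "$n$-vertex wheel" means for $n\le 3$), so that the count $n-1$ of neighbours of the hub outside $D$ is never exactly $1$ — and since a wheel has at least $3$ vertices under the usual convention, $n-1\ge 2$, so the "at least two" branch of the certified definition always applies. One could alternatively note that this is immediate from the general fact that any graph with a universal vertex $u$ and at least three vertices satisfies $\gamma_{\cer}(G)=1$ via $D=\{u\}$, of which $W_n$, $K_n$ for $n\ge 3$, and $K_{1,n}$ for $n>1$ are all special cases.
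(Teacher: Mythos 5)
Your proof is correct and is exactly the intended argument: the paper states this as an observation without proof, and the reasoning is precisely that the hub $u$ is a universal vertex, so $\{u\}$ dominates and $u$ has $n-1\ge 2$ neighbours outside the set (this is the special case of the paper's Observation on universal vertices that you mention at the end). No issues.
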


In addition, we have the following two general observations
on the certified domination number of a graph.

\begin{obs}\label{obs:universal}
If $G$ is a graph of order at least three, then
$\gamma_{\cer}(G)=1$ if and only if $G$ has a universal vertex.
\end{obs}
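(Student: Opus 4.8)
The plan is to prove both implications directly from the definitions, the only subtlety being where the hypothesis $|V_G|\ge 3$ is actually used.

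For the implication ``$G$ has a universal vertex $\Rightarrow \gamma_{\cer}(G)=1$'', I would let $v$ be a universal vertex of $G$ and consider the singleton $D=\{v\}$. Since $v$ is adjacent to every other vertex, $D$ is a dominating set of $G$. It remains to check the certified condition for the unique element $v\in D$: here is exactly where $|V_G|\ge 3$ enters, since then $|V_G\setminus D|=|V_G|-1\ge 2$ and all of these vertices are neighbours of $v$, so $v$ has at least two neighbours in $V_G\setminus D$. Hence $D$ is a certified dominating set, giving $\gamma_{\cer}(G)\le 1$; combined with the trivial lower bound $\gamma_{\cer}(G)\ge 1$ noted after the definition, we conclude $\gamma_{\cer}(G)=1$.

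For the converse ``$\gamma_{\cer}(G)=1 \Rightarrow G$ has a universal vertex'', I would take a $\gamma_{\cer}$-set $D=\{v\}$. Because $D$ is in particular a dominating set, every vertex of $V_G\setminus\{v\}$ has a neighbour in $D$, i.e.\ is adjacent to $v$; thus $\deg_G(v)=|V_G|-1$ and $v$ is a universal vertex. (Note the certified condition is not even needed for this direction.)

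I do not anticipate any real obstacle here; the statement is essentially a direct unwinding of the definitions. The one point that deserves an explicit sentence is why the order hypothesis is necessary — namely that for $|V_G|=2$ a universal vertex fails the ``at least two neighbours outside $D$'' requirement, which is consistent with $\gamma_{\cer}(K_2)=2$ from Observation~\ref{obs:Kn}.

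\begin{proof}
Assume first that $G$ has a universal vertex $v$, and set $D=\{v\}$. Since $v$ is adjacent to every other vertex of $G$, the set $D$ dominates $G$. Moreover, as $|V_G|\ge 3$, we have $|V_G\setminus D|=|V_G|-1\ge 2$, and every vertex of $V_G\setminus D$ is a neighbour of $v$; thus $v$ has at least two neighbours in $V_G\setminus D$, and so $D$ is a certified dominating set of $G$. Consequently $\gamma_{\cer}(G)\le |D|=1$, and since $\gamma_{\cer}(G)\ge 1$ for every graph, we obtain $\gamma_{\cer}(G)=1$.

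Conversely, assume $\gamma_{\cer}(G)=1$ and let $D=\{v\}$ be a $\gamma_{\cer}$-set of $G$. In particular $D$ is a dominating set, so each vertex of $V_G\setminus\{v\}$ has a neighbour in $D$, i.e.\ is adjacent to $v$. Hence $\deg_G(v)=|V_G|-1$, that is, $v$ is a universal vertex of $G$.
\end{proof}
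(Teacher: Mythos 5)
Your proof is correct, and since the paper states this as an Observation without proof, your argument is exactly the intended direct unwinding of the definitions (including the correct identification of where the hypothesis $|V_G|\ge 3$ is used).
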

\begin{obs}\label{obs:many_compo}
If $G_1,\ldots,G_k$ are the connected components of a graph $G$,
then $\gamma_\cer(G)=\sum\limits_{i=1}^k\gamma_\cer(G_i)$.
\end{obs}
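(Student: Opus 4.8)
The plan is to prove the two inequalities $\gamma_\cer(G)\le\sum_{i=1}^k\gamma_\cer(G_i)$ and $\gamma_\cer(G)\ge\sum_{i=1}^k\gamma_\cer(G_i)$ separately, in each case exploiting the single structural fact that there are no edges of $G$ joining distinct components $G_i$ and $G_j$. Consequently, for any vertex $v\in V_{G_i}$ and any set $D\subseteq V_G$ we have $N_G(v)=N_{G_i}(v)$, and in particular $N_G(v)\setminus D=N_{G_i}(v)\setminus(D\cap V_{G_i})$; this is the observation that makes both ``dominating'' and ``certified'' purely local properties.

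For the upper bound, I would take a $\gamma_\cer$-set $D_i$ of each $G_i$ and set $D=\bigcup_{i=1}^k D_i$. Every vertex $x\in V_G\setminus D$ belongs to some $V_{G_i}$, hence to $V_{G_i}\setminus D_i$, so it has a neighbour in $D_i\subseteq D$; thus $D$ dominates $G$. For the certified condition, any $v\in D$ lies in a unique $D_i$, and by the locality remark its neighbours in $V_G\setminus D$ coincide with its neighbours in $V_{G_i}\setminus D_i$, of which there are zero or at least two since $D_i$ is certified in $G_i$. Hence $D$ is a certified dominating set of $G$ and $\gamma_\cer(G)\le|D|=\sum_{i=1}^k|D_i|=\sum_{i=1}^k\gamma_\cer(G_i)$.

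For the lower bound, I would take a $\gamma_\cer$-set $D$ of $G$ and put $D_i=D\cap V_{G_i}$. A vertex $x\in V_{G_i}\setminus D_i$ is in $V_G\setminus D$, so it has a neighbour in $D$; that neighbour lies in $V_{G_i}$ and hence in $D_i$, so $D_i$ dominates $G_i$. Again by locality, for $v\in D_i$ the set $N_{G_i}(v)\setminus D_i$ equals $N_G(v)\setminus D$, which has zero or at least two elements, so $D_i$ is certified in $G_i$; therefore $\gamma_\cer(G_i)\le|D_i|$. Since $V_G=\bigcup_{i=1}^k V_{G_i}$ is a disjoint union, $\sum_{i=1}^k|D_i|=|D|=\gamma_\cer(G)$, and summing the inequalities gives $\sum_{i=1}^k\gamma_\cer(G_i)\le\gamma_\cer(G)$.

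There is no real obstacle here; the only point requiring a moment's care is the bookkeeping that the pieces $D_i$ in the lower-bound argument are genuinely disjoint and together recover all of $D$, which is immediate from the partition of $V_G$ into the vertex sets of the components. Combining the two inequalities yields the claimed equality.
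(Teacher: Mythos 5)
Your proof is correct and is precisely the locality argument the paper implicitly relies on: the observation is stated without proof, being regarded as immediate from the fact that neighbourhoods do not cross component boundaries, so both the domination and the certification conditions decompose componentwise. Both inequalities are established cleanly, so nothing is missing.
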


\section{Upper bounds on the certified domination number}\label{sec:upper}
In this section we focus on 
upper bounds on the certified domination number. 
We start with two simple observations and then
present our main result of this section: an upper bound on $\gamma_\cer(G)$ with respect to the domination number $\gamma(G)$
and the number $|S_1(G)|$ of weak supports in $G$.

\begin{obs}\label{obs:support}
If $D_\c$ is a certified dominating set of a graph $G$, then
every support vertex of $G$ belongs to $D_\c$.
\end{obs}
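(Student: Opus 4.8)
The plan is to argue by contradiction, exploiting the fact that a leaf has a unique neighbour. Let $v$ be a support vertex of $G$, and let $u$ be a leaf of $G$ with $\supp_G(u)=v$, so that $N_G(u)=\{v\}$. Suppose, contrary to the claim, that $v\notin D_\c$.

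First I would locate $u$ relative to $D_\c$. Since $D_\c$ dominates $G$, if $u\notin D_\c$ then $u$ has a neighbour in $D_\c$; but the only neighbour of $u$ is $v$, which would force $v\in D_\c$, contradicting our assumption. Hence $u\in D_\c$. Now examine the certified condition at $u$: the set of neighbours of $u$ lying in $V_G\setminus D_\c$ is $N_G(u)\cap(V_G\setminus D_\c)=\{v\}$, since $v\notin D_\c$. Thus $u$ has exactly one neighbour in $V_G\setminus D_\c$, which contradicts the requirement that every vertex of the certified dominating set $D_\c$ has either zero or at least two neighbours in $V_G\setminus D_\c$. Therefore $v\in D_\c$, and as $v$ was an arbitrary support vertex, every support vertex of $G$ belongs to $D_\c$.

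There is no real obstacle here; the only thing to be careful about is the degenerate possibility that the graph consists of a single edge $K_2$, where both endpoints are (mutually) leaves and supports, but the same one-line argument still applies vertex by vertex, and in any case Observation~\ref{obs:Pn} already records $\gamma_\cer(P_2)=2$, consistent with the statement.
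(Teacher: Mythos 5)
Your proof is correct and is essentially the paper's own argument: assuming the support $v$ is outside $D_\c$ forces its leaf $u$ into $D_\c$, where $u$ then has exactly one neighbour in $V_G\setminus D_\c$, contradicting the certified condition. No further comment is needed.
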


\begin{proof}
Let $s$ be a support vertex of $G$. 
If  $s$ were not in $D$, then $\leaf_G(s)$ should be in $D_\c$.
But then $\leaf_G(s)$ would have only one neighbour
in $V_G \setminus D_\c$,
and $D_\c$ would not be a certified dominating set.
\end{proof}

\begin{obs}\label{obs:strong_support}
Let $G$ be a graph of order $n$.
If the strong supports of $G$
are adjacent to $k$ leaves in total,
then $\gamma_{\cer}(G) \le n-k$.
In particular, $\gamma_{\cer}(G) \le n-2 \, |S_2(G)|$.
\end{obs}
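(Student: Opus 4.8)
The plan is to exhibit an explicit certified dominating set of cardinality $n-k$ and then deduce both inequalities from it. Let $L'$ denote the set of all leaves of $G$ that are adjacent to a strong support; by hypothesis $|L'|=k$. I would take $D_\c := V_G\setminus L'$ as the candidate and verify the two defining properties.

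First I would check that $D_\c$ dominates $G$. Since a strong support has degree at least two, no strong support is a leaf, and therefore every strong support of $G$ lies in $D_\c$. Consequently every vertex of $L'$ is adjacent to its (strong) support, which is in $D_\c$, so each vertex of $V_G\setminus D_\c=L'$ has a neighbour in $D_\c$.

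Next I would verify the certification condition. Take any $v\in D_\c$ and look at its neighbours in $V_G\setminus D_\c=L'$. The key observation is that a vertex of $L'$ is a leaf, hence has exactly one neighbour, namely its support, which is by definition a strong support. Thus if $v$ has any neighbour in $L'$, then $v$ must be that support, i.e.\ $v$ is a strong support; and in that case \emph{all} of the (at least two) leaves adjacent to $v$ belong to $L'$, so $v$ has at least two neighbours outside $D_\c$. Otherwise $v$ has no neighbour in $L'$ at all. Either way the certification condition is satisfied, so $D_\c$ is a certified dominating set and $\gamma_\cer(G)\le |D_\c| = n-k$.

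Finally, for the ``in particular'' statement, I would use that the sets of leaves attached to distinct supports are pairwise disjoint (again because every leaf has a unique neighbour); since each of the $|S_2(G)|$ strong supports contributes at least two leaves to $L'$, we get $k\ge 2\,|S_2(G)|$, and combining this with $\gamma_\cer(G)\le n-k$ yields $\gamma_\cer(G)\le n-2\,|S_2(G)|$. I do not anticipate a real obstacle here; the only point that needs care is the step identifying neighbours in $L'$ with strong supports, which is precisely where the degree-one property of leaves is essential.
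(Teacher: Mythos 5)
Your proposal is correct and follows exactly the paper's argument: remove the set of leaf-neighbours of strong supports and check that the complement is a certified dominating set, then use $k\ge 2\,|S_2(G)|$. You simply spell out the verification that the paper leaves implicit, and all the details (strong supports are not leaves, a vertex with a neighbour among the removed leaves must itself be a strong support) are handled correctly.
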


\begin{proof}
Let $L$ be the set of all leaf-neighbours of strong supports of $G$.
Then $|L|=k$ and the set $V_G \setminus L$ is a certified dominating set of $G$.
Thus $\gamma_{\cer}(G) \le |V_G \setminus L| = n-k \le n-2 \, |S_2(G)|$ as  $|L| \ge 2\, |S_2(G)|$.
\end{proof}

Before we present our main result,
let us introduce some useful terminology.
Let $D$ be a dominating set of a graph $G$.
An element of $D$ that has all neighbours in $D$ is said to be {\em shadowed} with respect to $D$ (shortly {\em shadowed}\/), an
element of $D$ that has exactly one neighbour in $V_G \setminus D$
is said to be {\em half-shadowed} with respect to $D$ (shortly {\em half-shadowed}\/), while an element of $D$ having at least two neighbours in $V_G \setminus D$
is said to be {\em illuminated} with respect to $D$ (shortly {\em illuminated}\/).
It is easy to observe that if $D$ is a minimum dominating set of a graph with no isolated vertices, then $D$ has no shadowed element, 
and if $D$ is a certified dominating set, then
$D$ has no half-shadowed element. 

\begin{thm}\label{thm:gammacer_vs_gamma_con}
If $G$ is a connected graph, then $\gamma_{\rm cer}(G) \le \gamma(G) +|S_1(G)|$.
\end{thm}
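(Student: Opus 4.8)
The goal is to start from a minimum dominating set $D$ of $G$ (so $|D| = \gamma(G)$) and, by adding at most $|S_1(G)|$ extra vertices, turn it into a certified dominating set. The obstruction to $D$ being certified is the existence of half-shadowed vertices — vertices $v \in D$ with exactly one neighbour outside $D$. The idea is that each half-shadowed vertex can be "fixed'' either by moving its unique outside-neighbour into $D$ (which shadows $v$, harmlessly, since shadowed vertices are allowed in a certified set) or by adding a suitable vertex, and we want to argue that the total number of such repairs needed is bounded by the number of weak supports.

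First I would fix a $\gamma$-set $D$ of $G$. Since $G$ is connected with at least two vertices (the case $|V_G|=1$ is trivial, $\gamma_{\rm cer}=\gamma=1$), $D$ has no shadowed vertex. By Observation~\ref{obs:support} every support vertex of $G$ lies in every certified dominating set, and more importantly I would note which supports already lie in $D$: every strong support lies in any dominating set only if... — actually a cleaner route is to build the certified set directly rather than starting from $D$ and doing surgery. The plan is: let $H$ be the set of half-shadowed vertices of $D$, and let $D' = D \cup \{\,u_v : v \in H\,\}$, where $u_v$ denotes the unique neighbour of $v$ in $V_G \setminus D$. Then in $D'$ every original half-shadowed $v$ becomes shadowed, which is fine. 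The remaining worry is whether adding the $u_v$'s creates new half-shadowed vertices; but a newly added vertex $u_v$ has a neighbour $v \in D'$, and I must check it is not half-shadowed — this is where the weak-support bound must enter, because the only way a vertex can be forced to have exactly one neighbour outside $D'$ in a way we cannot further repair cheaply is when it is a support with a single leaf. So the key claim to isolate is: the half-shadowed vertices of a $\gamma$-set, after this repair, are in bijective-or-injective correspondence with a subset of $S_1(G)$, giving $|D'| \le |D| + |S_1(G)|$.

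The crucial combinatorial step — and the main obstacle — is bounding $|H|$, the number of half-shadowed vertices, and controlling the chain reaction of repairs. I expect the right statement is that one can choose the $\gamma$-set $D$ and the repair so that each added vertex is charged to a distinct weak support. Concretely: if $v$ is half-shadowed with unique external neighbour $u_v$, and $u_v$ is not a leaf, then $u_v$ has another neighbour, so after adding $u_v$ we should check $u_v$ is illuminated or shadowed; if $u_v$ is a leaf, then $v = \supp_G(u_v)$ is a support, and since $v$'s only external neighbour is the leaf $u_v$, $v$ has no other leaf, so $v \in S_1(G)$ — and then we charge this repair to the weak support $v$. The heart of the proof is showing that the "bad'' half-shadowed vertices (those whose repair is not free) are exactly weak supports, each counted once, while all other half-shadowed vertices can be eliminated without increasing the set size (by swapping $v$ out for $u_v$, say, or by observing $u_v$ already sits next to two set-vertices). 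I would organise the write-up as: (i) reduce to the connected nontrivial case; (ii) take a $\gamma$-set $D$ chosen to contain as many supports as possible; (iii) define the repair $D' = D \cup \{u_v : v \in H\}$; (iv) prove $D'$ is certified by a short case analysis on each vertex of $D'$; (v) prove the injective charging $\{u_v\} \hookrightarrow S_1(G)$, concluding $|D'| \le \gamma(G) + |S_1(G)|$. Step (v) is where I expect to spend the most care, since it requires ruling out that two different half-shadowed vertices get charged to the same weak support and that weak supports already in $D$ (which contribute nothing new) are not double-counted.
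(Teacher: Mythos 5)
Your overall strategy is the paper's: start from a $\gamma$-set $D$, identify the half-shadowed vertices as the only obstruction, repair each one by one extra vertex, and charge the repairs injectively to $S_1(G)$. But what you have written is a plan rather than a proof, and the step you yourself flag as "the heart of the proof" --- that after a suitable choice of $D$ every half-shadowed vertex that genuinely costs an extra vertex is a weak support, each counted once --- is exactly the step that is missing. Two concrete problems. First, your extremal choice of $D$ ("as many supports as possible") is not the one that makes the argument close. The paper chooses $D$ so that (i) $D$ contains no leaf of $G$ and (ii) among such $\gamma$-sets, $D$ minimizes the number of half-shadowed vertices. Condition (i) is essential for your charging map: without it a half-shadowed vertex $v\in D$ may itself be a leaf (e.g.\ in the spider with centre $c$ adjacent to two leaves and to a vertex $b$ carrying one leaf $l_3$, the set $\{c,l_3\}$ is a $\gamma$-set in which $l_3$ is half-shadowed), and then $v$ is not a weak support and $u_v$ need not be a leaf, so your rule "charge the repair to $v\in S_1(G)$" has nothing to charge to. Your parenthetical argument that a leaf whose support lies in $D$ cannot be in a minimum dominating set does not exclude leaves whose support lies outside $D$.

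Second, your fallback for a half-shadowed $v$ that is not a weak support --- "swap $v$ out for $u_v$, or observe $u_v$ already sits next to two set-vertices" --- is asserted, not proved, and as stated it does not terminate: the swap $(D\setminus\{v\})\cup\{u_v\}$ is again a $\gamma$-set but may a priori contain new half-shadowed vertices, so iterating swaps needs a decreasing quantity. This is precisely what condition (ii) supplies: one proves (using that $D$ has no leaves, hence $\deg_G(v)\ge 2$) that minimality of $|D|$ forces every neighbour of $u_v$ other than $v$ to lie outside $D$ and every neighbour of $v$ in $D$ to be non-shadowed, whence the swapped set is a $\gamma$-set with strictly fewer half-shadowed vertices --- contradicting (ii). That contradiction, not an explicit iteration, is what shows all half-shadowed vertices are weak supports; after that, adding the $|D_{\rm hs}|\le|S_1(G)|$ corresponding leaves creates no new half-shadowed vertices because a leaf's only neighbour is its (now shadowed) support. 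To turn your outline into a proof you need to replace your extremal choice by the paper's two conditions and actually carry out the swap/contradiction argument.
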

\begin{proof}
If $G$ is a graph of order at most two, then the inequality is obvious.
Thus assume that $G$ has at least three vertices.
Let $D$ be a $\gamma$-set of $G$ that minimizes the number of
half-shadowed vertices 
and such that $D$ does not contain any leaf of $G$. (Notice that such $D$ always exists as $G$ is connected and $|V_G|\ge 3$.)
Let $D_{\rm hs} \subseteq D$ be the set of all half-shadowed
vertices of $D$.
If $D_{\rm hs}=\emptyset$, then $\gamma_\cer(G)=\gamma(G)
\le \gamma(G)+|S_1(G)|$. Thus assume that $D_{\rm hs} \not = \emptyset$.
\begin{description}
\item[Claim 1.] \textsl{If $v \in D_{\rm hs}$, then $\deg_G(v) \ge 2$ and $v \notin S_2(G)$.}
\\[2mm] {The inequality $\deg_G(v) \ge 2$ follows from the choice of $D$, that is, from the assumption that
$D \cap L_G = \emptyset$. 
To argue the second property, suppose on the contrary that $v$ is a strong support.
Again, since $L_G \cap D = \emptyset$ and $v$ has at least two leaf-neighbours,
$v$ would not be half-shadowed, a~contradiction.}
\end{description}
Next we show that all half-shadowed vertices are weak supports.
Suppose on the contrary that there is a half-shadowed vertex $v \in D_{\rm hs} \setminus S_1(G)$
and let $u$ be the unique neighbour of $v$ in $V_G \setminus D$.
Since $v$ is neither a weak nor strong support (by assumption and Claim 1, respectively), it implies that
$u$ is not a leaf.
Furthermore, we have the following claim.
\begin{description}
\item[Claim 2.] \textsl{All but $v$ neighbours of $u$ are in $V_G \setminus D$.}
\\[2mm] {Otherwise the set $D \setminus \{v\}$
would be a smaller (than $D$) dominating set of $G$.}
\end{description}
On the other hand, as regards all (but $u$) neighbours of $v$ that are in $D$,
we have the following claim.
\begin{description}
\item[Claim 3.] \textsl{If $w \in N_G(v) \setminus \{u\}$, then $w$ is not shadowed.}
    \\[2mm] {Otherwise the set $D \setminus \{w\}$
would be a smaller (than $D$) dominating set of $G$.}
\end{description}
Consequently, keeping in mind the fact that none of neighbours of $v$
is a leaf (see Claim~1), by combining Claims~2 and~3,
we conclude that the set $(D \setminus\{v\}) \cup \{u\}$ would
be a dominating set with a smaller number of half-shadowed vertices, a contradiction.
Therefore, the set $D_{\rm hs}$ of half-shadowed vertices 
consists of weak supports of $G$ only.

Observe now that adding to $D$ all leaves adjacent
to half-shadowed weak supports results in a dominating
set $D'$ of $G$ with no half-shadowed vertices, 
that is, $D'$ is a~certified dominating set of $G$.
Therefore $\gamma_{\rm cer}(G) \le  |D'| = |D| + |D_{\rm hs}| = \gamma(G) + |D_{\rm hs}| \le \gamma(G) +|S_1(G)|$.
\end{proof}

From Observation~2.\ref{obs:many_compo}
and Theorem~3.\ref{thm:gammacer_vs_gamma_con},
we immediately obtain the following corollary.

\begin{wn}\label{wn:gammacer_vs_gamma}
If $G$ is a graph, then $\gamma_{\rm cer}(G) \le \gamma(G) +|S_1(G)|$.
\end{wn}

Finally, we have  the following corollary.

\begin{wn}\label{wn:gammacer_vs_2gamma}
If $G$ is a graph, then $\gamma_{\rm cer}(G) \le 2 \, \gamma(G)$.
\end{wn}
\begin{proof}
Let $H$ be a connected component of $G$.
If $H=K_2$, then $\gamma_\cer(H)=2\gamma(H)$.
If $H \neq K_2$, then $|S_1(H)| \le \gamma(H)$,
and thus $ \gamma_{\rm cer}(H) \le 2 \, \gamma(H)$ by
Theorem~3.\ref{thm:gammacer_vs_gamma_con}.
Consequently, taking into account Observation~2.\ref{obs:many_compo},
we conclude that  $\gamma_{\rm cer}(G) \le 2 \, \gamma(G)$.
\end{proof}

We emphasize that the above upper bounds are sharp:
the bound $\gamma_\cer(G) \le \gamma(G)+|S_1(G)|$ in terms of $|S_1(G)|$,
while the bound $\gamma_\cer(G) \le 2 \gamma(G)$ in terms of $\gamma(G)$,
and their sharpnesses are established by coronas of graphs
which we shall discuss in Section~\ref{sec:gammacer_large} (see Remark on page~\pageref{remark}).

\section{Graphs with $\gamma_\cer=\gamma$}\label{sec:gammacer_gamma}

We continue our study on the certified domination number
by focusing now on the class of graphs with $\gamma_\cer=\gamma$.
When trying to characterise this class,
one may expect that the main problem lies in leaves of a graph.
In fact, from the inequalities
$\gamma(G)\le \gamma_\cer(G) \le \gamma(G)+|S_1(G)|$
(see Corollary~3.\ref{wn:gammacer_vs_gamma}),
we immediately have the first two results of this section.

\begin{wn}\label{wn:gammacer_is_gamma1}
If $G$ is a graph with no weak support, then $\gamma_\cer(G)=\gamma(G)$.
\end{wn}
\begin{wn}\label{wn:gammacer_is_gamma_delta}
If $G$ is a graph with $\delta(G)\ge 2$, then $\gamma_\cer(G)=\gamma(G)$.
\end{wn}
The above two corollaries also follow from the next more general lemma.
\begin{lem}\label{thm:S1_gammacer_gamma}
If a connected graph $G$ has at least three vertices, then $\gamma_\cer(G)=\gamma(G)$ if and only if there exists a minimum dominating set $D$ of $G$ such that $N_G(s)\setminus L_G\not\subseteq D$ for every $s \in S_1(G)$. \end{lem}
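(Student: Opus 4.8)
The plan is to prove the two implications separately; the forward implication is short, and the converse rests on a swapping argument modelled on the proof of Theorem~3.\ref{thm:gammacer_vs_gamma_con}.

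\emph{Necessity.} Suppose $\gamma_{\cer}(G)=\gamma(G)$ and let $D$ be a $\gamma_{\cer}$-set. Then $|D|=\gamma(G)$, so $D$ is a minimum dominating set; hence it has no shadowed vertex, and, being certified, no half-shadowed vertex. In particular $D\cap L_G=\emptyset$, since a leaf of $G$ lying in $D$ is shadowed when its support is in $D$ and half-shadowed otherwise. Being leaf-free, $D$ must contain every support vertex of $G$ (else a leaf at an omitted support would be undominated). Now fix $s\in S_1(G)$: then $s\in D$ and $\leaf_G(s)$ is a neighbour of $s$ outside $D$, so, $D$ being certified, $s$ has a second neighbour $w\notin D$; and $w$ is not a leaf, for otherwise $s$ would have two leaf-neighbours, contradicting $s\in S_1(G)$. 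Thus $w\in N_G(s)\setminus L_G$ and $w\notin D$, which is exactly the asserted property.

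\emph{Sufficiency.} Conversely, assume a minimum dominating set $D$ with $N_G(s)\setminus L_G\not\subseteq D$ for all $s\in S_1(G)$ is given. I would first reduce to the case $D\cap L_G=\emptyset$, and then, among all minimum dominating sets having both properties, choose $D$ with the fewest half-shadowed vertices; it then suffices to prove that $D$ has none, for then $D$ is certified and $\gamma_{\cer}(G)\le|D|=\gamma(G)\le\gamma_{\cer}(G)$. Suppose $v\in D$ is half-shadowed with unique outside neighbour $u$. Since $D$ is leaf-free, $v$ is not a leaf; and $v$ is not a support either, because a strong support has two leaf-neighbours outside $D$, while a weak support $s$ has $\leaf_G(s)\notin D$ together with a non-leaf neighbour outside $D$ by hypothesis --- in both cases $v$ would be illuminated. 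Hence $u$ is not a leaf (a leaf $u$ would make $v=\supp_G(u)$ a support). As in the proof of Theorem~3.\ref{thm:gammacer_vs_gamma_con} (Claim~2) one shows that $v$ is the only neighbour of $u$ in $D$, so $D'=(D\setminus\{v\})\cup\{u\}$ is again a minimum dominating set; it is leaf-free, and it still satisfies the hypothesis --- if it failed at some $s'\in S_1(G)$ then $u\in N_G(s')$ and, $D'$ being leaf-free, $s'\in D'$, so $s'\in D\setminus\{v\}$ would be a neighbour of $u$ distinct from $v$, impossible. Finally, since no vertex of $D\setminus\{v\}$ is adjacent to $u$, passing from $D$ to $D'$ makes $u$ illuminated and creates no new half-shadowed vertex, so $D'$ has strictly fewer half-shadowed vertices than $D$, contradicting the choice of $D$.

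The step I expect to cost the most is the reduction to a leaf-free witness: exchanging a leaf $\ell\in D$ for its support $\supp_G(\ell)$ can destroy the property $N_G(s)\setminus L_G\not\subseteq D$ at a neighbouring weak support $s$ whose only non-leaf neighbour outside $D$ happens to be $\supp_G(\ell)$, so these exchanges have to be chased along chains of weak supports and shown to stabilise (alternatively, $D\cap L_G=\emptyset$ must be part of the hypothesis). Once a leaf-free witness is in hand, the remaining argument is a routine variant of the proof of Theorem~3.\ref{thm:gammacer_vs_gamma_con}, and Corollaries~4.\ref{wn:gammacer_is_gamma1} and~4.\ref{wn:gammacer_is_gamma_delta} fall out by taking $D$ to be any (leaf-free) $\gamma$-set, for which the condition is vacuous.
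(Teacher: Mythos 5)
Your necessity argument is correct and is essentially the paper's: a minimum certified dominating set contains no leaf (a leaf in it would be shadowed or half-shadowed), hence contains every support, and for a weak support $s$ the second outside neighbour forced by the certified condition is a non-leaf, giving $N_G(s)\setminus L_G\not\subseteq D$. Your sufficiency argument likewise follows the paper's swapping scheme, and once a \emph{leaf-free} minimum dominating set satisfying the hypothesis is in hand, your exchange $D\mapsto(D\setminus\{v\})\cup\{u\}$ and the verification that it preserves leaf-freeness and the hypothesis are sound (the only unstated point is that a vertex of $D\setminus\{v\}$ adjacent to $v$ gains an outside neighbour, which is harmless because a minimum dominating set of a graph without isolated vertices has no shadowed vertex).

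The step you flag as the costly one --- reducing to a leaf-free witness --- is, however, not merely costly: it cannot be carried out in general, because the lemma as stated is false. Take $G=P_4$ with vertices $1,2,3,4$. Then $S_1(G)=\{2,3\}$, $L_G=\{1,4\}$, and $D=\{1,4\}$ is a minimum dominating set with $N_G(2)\setminus L_G=\{3\}\not\subseteq D$ and $N_G(3)\setminus L_G=\{2\}\not\subseteq D$; yet $\gamma_\cer(P_4)=4\neq 2=\gamma(P_4)$ by Observation~2.1. So no chasing of exchanges will produce a leaf-free witness here --- none exists --- and your parenthetical alternative, namely adding $D\cap L_G=\emptyset$ to the hypothesis, is the correct repair. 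The paper's own proof silently makes exactly this assumption: it chooses, ``of all such sets'', one containing no leaf, justifying its existence only by the remark that leaf-free $\gamma$-sets exist in every connected graph of order at least three, which does not guarantee one that also satisfies the neighbourhood condition. With the hypothesis strengthened to require $D\cap L_G=\emptyset$, your proof (and the paper's) goes through; your necessity argument already produces a leaf-free witness, and Corollaries~4.1 and~4.2 are unaffected since for them $S_1(G)=\emptyset$ and any leaf-free $\gamma$-set serves.
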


\begin{proof}  Assume first that $\gamma_\cer(G)=\gamma(G)$. Let $D_\c$ be a minimum
certified dominating set of~$G$. Then $D_\c$ is a minimum dominating set of $G$. Now, if
$s\in S_1(G)$, 
then $D_\c\cap\{s, \leaf_G(s)\}\not= \emptyset$ (as $D_\c$ is dominating in $G$), $|D_\c\cap\{s, \leaf_G(s)\}|\not=2$ (otherwise $D_\c\setminus\{\leaf_G(s)\}$ would be a smaller dominating set of $G$), and $D_\c\cap\{s, \leaf_G(s)\}\not= \{\leaf_G(s)\}$ (otherwise $\leaf_G(s)$
would be half-shadowed). Thus $D_\c\cap\{s, \leaf_G(s)\}= \{s\}$ and $(N_G(s) \setminus L_G)\cap (V_G\setminus D_\c)= (N_G(s)\setminus \{\leaf_G(s)\})\cap (V_G\setminus D_\c)\not=\emptyset$ (otherwise $s$
would be half-shadowed), and so $N_G(s)\setminus L_G\not\subseteq D_\c$.

Assume now that in $G$ there exists a $\gamma$-set $D$ such that $N_G(s)\setminus L_G \not\subseteq D$ for every $s\in S_1(G)$. Of all such sets, choose one, say $D'$, that does not contain any leaf of $G$ (such $D'$ exists in every connected graph of order at least three) and minimizes the number of its half-shadowed vertices.
We claim that such $D'$ is a certified dominating set of $G$ (and therefore
$\gamma(G)= |D'|= \gamma_{\cer}(G)$). Suppose, on the contrary, that some element $v$ of $D'$ is half-shadowed. Let $v'$ be the unique element of $N_G(v)\setminus D'$.
Since $v$ is half-shadowed, $v\not\in S_2(G)$, and $v\not\in S_1(G)$ (as every element of $S_1(G)$ is illuminated by the adjacent leaf and, by the assumption, by at least one non-leaf). Finally, since $D'\cap L_G=\emptyset$ (by the choice of $D'$) and $v\in D'$, we have $v\not\in
L_G$ and $d_G(v)\ge 2$. Now, if it were $N_G(v')\cap (D'\setminus\{v\})\not= \emptyset$, then $D'\setminus\{v\}$ would be a dominating set of $G$ smaller than $D'$, a contradiction. Thus $N_G(v') \setminus\{v\}$ must be a~nonempty subset of $V_G\setminus D'$ and, then, $D''=(D'\setminus \{v\}) \cup \{v'\}$ is a minimum dominating set of $G$ and it has less half-shadowed vertices than $D'$, a final contradiction which proves that $\gamma(G)=\gamma_{\cer}(G)$. \end{proof}

Observe that if $G=\overline{K_n}$, then $\gamma_\cer(G)  = n = \gamma(G)$. Next, if $G=lK_2$,
then $\gamma_\cer(G) =2l \neq l = \gamma(G)$.
In the latter case, $S_1(G)=V_G=L_G$ and
$G$ has no minimum dominating set $D$ of $G$
such that $N_G(s)\setminus L_G\not\subseteq D$ for every $s \in S_1(G)$. Therefore, taking into account Observation~2.\ref{obs:many_compo}
and Lemma~4.\ref{thm:S1_gammacer_gamma},
we obtain the following corollary for graphs which are not necessarily connected.

 \begin{wn}\label{thm:S1_gammacer_gamma_dis}
If $G$ is a graph, then $\gamma_\cer(G)=\gamma(G)$ if and only if there exists a minimum dominating set $D$ in $G$ such that $N_G(s)\setminus L_G\not\subseteq D$ for every $s \in S_1(G)$.
\end{wn}
Furthermore, we have the following relation between graphs each of which has a unique minimum dominating set and those for which $\gamma_\cer$ and $\gamma$ are equal.
\begin{wn}\label{wn:gammacer_is_gamma2}
If a graph $G$ has a unique minimum dominating set, then $\gamma_\cer(G)=\gamma(G)$.
\end{wn}

\begin{proof}
If $S_1(G)=\emptyset$, then $\gamma_\cer(G) = \gamma(G)$ by Corollary~4.\ref{wn:gammacer_is_gamma1}. Thus assume that
$S_1(G) \neq \emptyset$. Let $D$ be the minimum dominating set of $G$. From the uniqueness and minimality of $D$ it follows that $S_1(G) \subseteq D$ and $L_G \subseteq V_G \setminus D$.
Now, if it were $\gamma_\cer(G) \neq \gamma(G)$, then,
by Lemma~4.\ref{thm:S1_gammacer_gamma},
we could find $s \in S_1(G)$
such that $N_G(s) \setminus \{\leaf_G(s)\} \subseteq D$, and
then the set $(D \setminus \{s\}) \cup \{\leaf_G(s)\}$ would be
another  minimum dominating set of~$G$, which is impossible.
\end{proof}

\section{Graphs with large values of $\gamma_\cer$}\label{sec:gammacer_large}

As we have already observed, for any graph $G$ of order $n$, $\gamma_\cer(G) \le n$, $\gamma_\cer(G) \neq n-1$,
and there are graphs $G$ with $\gamma_\cer(G)=n$,
for example,
the complement of a complete graph $K_n$
or a 4-vertex path $P_4$.
Thus it is natural to try to characterise all graphs with $\gamma_\cer=n$
and $\gamma_\cer=n-2$, respectively,
which is successfully carried out in this section.
In particular, we prove that $\gamma_\cer(G)=n$ if and only
if $G$ is the complement of a complete graph,
the corona of a graph, or the union of both of them.
Recall, the {\em corona product} (or simply, the {\em corona}) of two graphs $H$ and $F$
is the graph $G = H \circ F$ resulting from
the disjoint union of $H$ and $|V_H|$
copies of $F$ in which the $i$-th vertex of $H$ is joined
to all vertices of the $i$-th copy of $F$. 
If $F$ is a 1-vertex graph, $F = K_1$, then
the corona $H \circ K_1$ is simply called the {\em corona} of $H$.

\begin{lem}\label{lem:coronas-n} Let $G$ be a connected graph of order $n$. If $G$ is the corona of some graph, then $\gamma_\cer(G)=n$.
\end{lem}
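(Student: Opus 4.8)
The plan is to show that every vertex of $G = H \circ K_1$ must belong to any certified dominating set. Write $V_H = \{h_1,\dots,h_m\}$ for the vertices of the base graph $H$, and let $\ell_i$ be the leaf attached to $h_i$ in the corona, so $n = 2m$. Each $\ell_i$ is a leaf of $G$ whose unique neighbour is $h_i$, hence each $h_i$ is a support vertex of $G$. By Observation~3.\ref{obs:support}, every support vertex of $G$ lies in any certified dominating set, so $\{h_1,\dots,h_m\} \subseteq D_\c$ for every certified dominating set $D_\c$ of $G$.

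Next I would argue that the leaves are forced in as well. Suppose some $\ell_i \notin D_\c$. Since $h_i \in D_\c$ and $h_i$ is the only neighbour of $\ell_i$, the vertex $h_i$ is a neighbour of $\ell_i$ in $D_\c$, so $\ell_i$ is dominated — that part is fine. The obstruction is on the side of $h_i$: consider the neighbours of $h_i$ in $V_G \setminus D_\c$. Its neighbours are $\ell_i$ together with the $h_j$ for $h_j \in N_H(h_i)$, but all those $h_j$ already lie in $D_\c$ by the previous paragraph. Hence the only possible neighbour of $h_i$ outside $D_\c$ is $\ell_i$ itself, so $h_i$ has exactly one neighbour in $V_G \setminus D_\c$, i.e. $h_i$ is half-shadowed. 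This contradicts the assumption that $D_\c$ is certified. Therefore $\ell_i \in D_\c$ for every $i$, and combined with the first paragraph this gives $D_\c = V_G$, so $\gamma_\cer(G) = |V_G| = n$.

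The argument is essentially immediate once Observation~3.\ref{obs:support} is invoked, so there is no real obstacle; the only point requiring a little care is the degenerate case $H = K_1$, where $G = K_2$: then $m = 1$, $n = 2$, and indeed $\gamma_\cer(K_2) = 2 = n$ by Observation~2.\ref{obs:Kn}, consistent with the statement. One should also note that connectedness of $G$ forces $H$ to be connected, but this is not actually used — the key facts are simply that each $h_i$ is a support and that every neighbour of $h_i$ other than $\ell_i$ is itself a support (being some $h_j$), which pins down the whole vertex set.
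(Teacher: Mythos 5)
Your proof is correct and follows essentially the same route as the paper's: first force all support vertices $h_i$ into $D_\c$ via Observation~3.\ref{obs:support}, then observe that excluding any leaf $\ell_i$ would leave $h_i$ with exactly one neighbour outside $D_\c$, contradicting the certified condition. The paper merely phrases the first step as $V_G\setminus L_G\subseteq D_\c$ and disposes of $n=2$ separately, so there is nothing substantive to add.
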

\begin{proof} Let $D_\c$ be a smallest certified dominating set of $G$.
It suffices to prove that $D_\c=V_G$. This is obvious if $n=2$. Thus assume
$n>2$. In this case, since $G$ is the corona of some graph, every vertex
of $G$ either is a leaf of $G$ or is adjacent to exactly one leaf of $G$.
From this and from  Observation~3.\ref{obs:support} it follows that
$V_G \setminus L_G\subseteq D_\c$. Moreover, every leaf $l$ of $G$ also belongs to $D_\c$ (as otherwise its only neighbour $\supp_G(l)$ would be half-shadowed). Consequently, $L_G\subseteq V_G$ and therefore $D_\c=V_G$.
\end{proof}

\begin{lem}\label{lem:arbitrary}
Let $G$ be a connected graph of order $n \ge 2$.
If $\gamma_\cer(G)=n$, then $G$ is the corona of some graph.
\end{lem}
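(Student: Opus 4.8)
The plan is to assume $\gamma_\cer(G)=n$ and deduce that every vertex of $G$ is either a leaf or has exactly one leaf-neighbour, and moreover that the leaves are in bijective correspondence with the non-leaves; this is precisely the statement that $G$ is the corona $H \circ K_1$ of the subgraph $H$ induced by the non-leaf vertices. The case $n=2$ gives $G=K_2 = K_1 \circ K_1$ directly, so I would assume $n \ge 3$ throughout.

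The heart of the argument is the contrapositive of the edge/vertex-removal intuition: if some vertex fails the corona condition, I can exhibit a certified dominating set of size at most $n-1$, contradicting $\gamma_\cer(G)=n$. Concretely, I would first argue that $V_G \setminus L_G$ cannot be a certified dominating set by itself unless it equals $V_G$ (i.e. unless $L_G=\emptyset$); since $V_G\setminus L_G$ is dominating whenever $G$ is connected and has a non-leaf, the only way it fails to be certified is if some non-leaf $s$ has exactly one neighbour outside $V_G\setminus L_G$ — but its neighbours outside are exactly its leaf-neighbours, so $s$ is a weak support. Thus if $G$ has no weak support, $V_G\setminus L_G$ is already a certified dominating set, and it is proper unless $L_G=\emptyset$; in the latter case $G$ has minimum degree at least — wait, not quite, so I would instead handle the leafless case separately: if $L_G=\emptyset$, pick any vertex $v$ and check that $V_G\setminus\{v\}$ is certified dominating (it is dominating since $v$ has a neighbour, and every vertex in it has $v$ plus, as needed, its own other neighbours outside — this needs $\deg_G(v)\ge 2$ or a short case analysis), giving $\gamma_\cer(G)\le n-1$, a contradiction. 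Hence $G$ has at least one leaf, and by Observation~3.\ref{obs:support} all supports lie in every $\gamma_\cer$-set, so $V_G\setminus L_G$ is contained in the (unique, forced) $\gamma_\cer$-set $V_G$.

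The remaining work is to rule out: (i) a non-leaf that is not a support at all, and (ii) a strong support. For (i), if $v$ is a non-leaf adjacent to no leaf, then since every vertex of $G$ must be in $D_\c = V_G$ it gives no immediate contradiction, so instead I would delete $v$ from $V_G$: I claim $V_G\setminus\{v\}$ is certified dominating. It dominates $G$ because $v$ has a neighbour. A vertex $w\ne v$ in $V_G\setminus\{v\}$ that was previously illuminated-by-nothing (it had all neighbours in $V_G$) now has at most the one neighbour $v$ outside — so I must ensure no $w$ becomes half-shadowed, which forces $v$'s neighbours to each have another neighbour equal to... this is getting delicate, so the cleaner route is: since $\gamma_\cer(G)=n$, NO proper subset is a certified dominating set, and I leverage this maximality directly — for each non-leaf $v$ that is not a weak support, $V_G\setminus\{v\}$ must fail to be certified, meaning some neighbour $w$ of $v$ has all its neighbours in $V_G\setminus\{v\}$ except exactly one, i.e. $w$ has exactly one neighbour besides $v$ outside... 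Actually the transparent statement is: $V_G\setminus\{v\}$ fails $\Leftrightarrow$ some neighbour $w$ of $v$ has exactly one neighbour outside $V_G\setminus\{v\}$, and that neighbour is $v$ itself, so $w$ is a leaf — contradicting that $v$ is not a support unless $w=v$'s unique leaf, making $v$ a weak support. I expect step (ii), eliminating strong supports, to be the main obstacle: if $s$ is a strong support with leaves $l_1,l_2,\dots$, then $V_G\setminus\{l_1\}$ is still dominating (since $s$ remains), $l_1$ is gone, $s$ now has $l_2$ (and possibly more) outside so $s$ stays illuminated, and no other vertex's status changes — hence $V_G\setminus\{l_1\}$ is certified dominating of size $n-1$, the desired contradiction. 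So in fact (ii) is easy and it is (i), or rather the leafless case and the bookkeeping of "which vertex to delete," that needs the most care. Having excluded non-supports and strong supports, every non-leaf is a weak support, every leaf has a unique support, and the map $l \mapsto \supp_G(l)$ is a bijection $L_G \to V_G\setminus L_G$; writing $H = G[V_G\setminus L_G]$ we conclude $G = H\circ K_1$, as required. \eop
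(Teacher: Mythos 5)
Your overall plan (show every non-leaf is a weak support, hence $G=G[V_G\setminus L_G]\circ K_1$) is a reasonable target, but the device you use to reach it is broken. For any vertex $x$ with $\deg_G(x)\ge 1$, the set $V_G\setminus\{x\}$ is \emph{never} a certified dominating set: its complement is $\{x\}$, so every neighbour of $x$ has exactly one neighbour outside the set and is therefore half-shadowed. (This is precisely why $\gamma_\cer(G)\ne n-1$ for every graph $G$.) This one confusion --- between ``$w$ has exactly one neighbour outside $V_G\setminus\{v\}$'' and ``$w$ has exactly one neighbour in $G$'' --- invalidates all three of your deletion steps. In the leafless case $V_G\setminus\{v\}$ is not certified (try $G=C_5$), so no contradiction is produced there. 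In step (i) the failure of $V_G\setminus\{v\}$ holds for \emph{every} non-isolated $v$ and hence carries no information about whether $v$ is a weak support, and your inference that the offending neighbour $w$ ``is a leaf'' does not follow, since $w$ may have many neighbours inside the set. In step (ii) deleting only $l_1$ leaves the strong support $s$ half-shadowed, because $l_2$ remains \emph{inside} $V_G\setminus\{l_1\}$, not outside it; you must delete two leaves of $s$ simultaneously, as in Observation~3.\ref{obs:strong_support}. Any correct version of your argument must remove at least two vertices at a time, and ruling out a non-leaf that is adjacent to no leaf then requires genuine work (compare the case analysis needed for Lemma~6.\ref{lem:three_properties}).

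The paper sidesteps all of this with a short counting argument: for connected $G$ of order $n\ge 5$ there is no isolated vertex, so Ore's bound gives $\gamma(G)\le n/2$, and Corollary~3.\ref{wn:gammacer_vs_2gamma} yields $n=\gamma_\cer(G)\le 2\gamma(G)\le n$, forcing $\gamma(G)=n/2$; the Fink--Jacobson--Kinch--Roberts characterisation of connected graphs with domination number half their order then says $G$ is a corona (the only other candidate, $C_4$, is excluded by $n\ge 5$), and orders $n\le 4$ are checked directly. If you want a self-contained structural proof along your lines, you would need to repair the three steps above first.
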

\begin{proof}
The statement is obvious for connected graphs of order at most $4$.
Thus assume that $G$ is a connected graph of order $n \ge 5$
and $\gamma_\cer(G)=n$.
Now, since $\gamma(G) \le n/2$ for every graph with
no isolated vertex,
so by Corollary~3.\ref{wn:gammacer_vs_2gamma}
we have
$\gamma_\cer(G) \le 2 \gamma(G) \le n=\gamma_\cer(G)$.
Thus $\gamma(G)= n/2$ and so  $G$ is the corona of some graph
 (as it was proved in~\cite{FJKR85}).
\end{proof}

From the above lemmas, we immediately conclude with the following theorem.

\begin{thm}\label{thm:corona}
If $G$ is a graph of order $n$, then
$\gamma_\cer(G)=n$ if and only if $G$
is either the complement of a complete graph,
or the corona of a graph, 
or the union of both of them.
\end{thm}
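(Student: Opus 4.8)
The plan is to bootstrap from the two lemmas just proved, which settle the connected case, and assemble the general statement using the additivity of $\gamma_\cer$ over connected components (Observation~2.\ref{obs:many_compo}).

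For the backward implication I would argue directly. If $G=\overline{K_n}$, then every component is $K_1$ and $\gamma_\cer(G)=n$ follows at once (component by component, via Observation~2.\ref{obs:many_compo}). If $G$ is the corona $H\circ K_1$ of a graph $H$, I would note that each component of $G$ has the form $H_i\circ K_1$ for a component $H_i$ of $H$, hence is a connected corona, so Lemma~5.\ref{lem:coronas-n} applies to it; summing $\gamma_\cer$ over the components then yields $\gamma_\cer(G)=n$. The ``union of both'' case is treated in exactly the same way, component by component.

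For the forward implication, suppose $\gamma_\cer(G)=n$ and let $G_1,\ldots,G_k$ be the components of $G$, with $n_i=|V_{G_i}|$. Since $\gamma_\cer(G_i)\le n_i$ for each $i$ while $\sum_i\gamma_\cer(G_i)=\gamma_\cer(G)=n=\sum_i n_i$ by Observation~2.\ref{obs:many_compo}, equality must hold in each summand, i.e.\ $\gamma_\cer(G_i)=n_i$ for all $i$. For a component with $n_i=1$ this simply says $G_i=K_1=\overline{K_1}$; for a component with $n_i\ge 2$, Lemma~5.\ref{lem:arbitrary} tells us $G_i$ is a connected corona. Thus every component of $G$ is either an isolated vertex or a connected corona. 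Grouping the isolated vertices gives a part isomorphic to $\overline{K_m}$ (possibly empty), and grouping the remaining components gives a disjoint union of connected coronas; since $(H_1\circ K_1)\cup(H_2\circ K_1)=(H_1\cup H_2)\circ K_1$, such a union is itself the corona of a graph. Hence $G$ is $\overline{K_m}$, the corona of a graph, or the disjoint union of the two, completing the proof.

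I do not expect a real obstacle here: the substance is already in Lemmas~5.\ref{lem:coronas-n} and~5.\ref{lem:arbitrary} (and, through the latter, in the characterisation of graphs with $\gamma=n/2$ from~\cite{FJKR85}). The only points needing a moment's care are the degenerate configurations — isolated-vertex components, and the empty base graph of a corona — together with the remark that the class of coronas is closed under disjoint union, which is what allows several corona components to be folded into a single corona.
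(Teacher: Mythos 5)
Your proposal is correct and follows essentially the same route as the paper, which derives the theorem immediately from Lemmas~5.\ref{lem:coronas-n} and~5.\ref{lem:arbitrary} together with the componentwise additivity of $\gamma_\cer$ (Observation~2.\ref{obs:many_compo}); you have merely written out the component bookkeeping that the paper leaves implicit. The closure of coronas under disjoint union and the $K_1=\overline{K_1}$ degenerate case are handled correctly.
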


\noindent\textbf{Remark.}\label{remark} We incidentally observe that
the above result implies the sharpness of the upper bound
$\gamma_\cer(G) \le \gamma(G)+|S_1(G)|$ (Corollary~3.\ref{wn:gammacer_vs_gamma})
in terms of $|S_1(G)|$ as well as the sharpness
of the upper bound $\gamma_\cer(G) \le 2 \gamma(G)$ (Corollary~3.\ref{wn:gammacer_vs_2gamma}) in terms of
$\gamma(G)$, since for the corona $G$ of any graph,
we have $|S_1(G)|=\gamma(G)$ and $\gamma_\cer(G)=2 \gamma(G)$.

\subsection{Graphs with $\gamma_\cer=n-2$}\label{subsec:n2}
A {\em diadem graph of a graph $H$} is a graph obtained from the corona $H \circ K_1$
by adding a~new vertex, say $v$, and joining $v$ to one of support vertices of $H\circ K_1$ (see Fig.~\ref{fig:example_diadem_chandelier}).
%
\begin{figure}[h!]
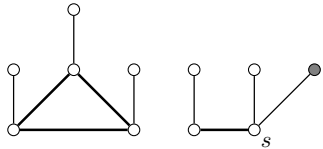

\begin{center}

\pspicture(0,-0.2)(4,2)\scalebox{0.8}{

\cnode[linewidth=0.5pt,fillstyle=solid,fillcolor=white,linecolor=black](0,0){3pt}{x1}
\cnode[linewidth=0.5pt,fillstyle=solid,fillcolor=white,linecolor=black](2,0){3pt}{x2}
\cnode[linewidth=0.5pt,fillstyle=solid,fillcolor=white,linecolor=black](4,0){3pt}{x3}
\cnode[linewidth=0.5pt,fillstyle=solid,fillcolor=white,linecolor=black](0,1){3pt}{x4}
\cnode[linewidth=0.5pt,fillstyle=solid,fillcolor=white,linecolor=black](1,1){3pt}{x5}
\cnode[linewidth=0.5pt,fillstyle=solid,fillcolor=white,linecolor=black](2,1){3pt}{x6}
\cnode[linewidth=0.5pt,fillstyle=solid,fillcolor=white,linecolor=black](3,0){3pt}{x7}
\cnode[linewidth=0.5pt,fillstyle=solid,fillcolor=white,linecolor=black](4,1){3pt}{x8}
\cnode[linewidth=0.5pt,fillstyle=solid,fillcolor=gray,linecolor=black](5,1){3pt}{x9}
\cnode[linewidth=0.5pt,fillstyle=solid,fillcolor=white,linecolor=black](1,2){3pt}{x10}
\cnode[linewidth=0.5pt,fillstyle=solid,fillcolor=white,linecolor=black](3,1){3pt}{x11}

\ncline[linewidth=0.6pt]{x1}{x4}
\ncline[linewidth=1.2pt]{x1}{x5}
\ncline[linewidth=1.2pt]{x1}{x2}
\ncline[linewidth=1.2pt]{x2}{x5}
\ncline[linewidth=1.2pt]{x3}{x7}
\ncline[linewidth=0.6pt]{x3}{x8}
\ncline[linewidth=0.6pt]{x3}{x9}
\ncline[linewidth=0.6pt]{x5}{x10}
\ncline[linewidth=0.6pt]{x6}{x2}
\ncline[linewidth=0.6pt]{x7}{x11}

\rput(4.2,-0.2){\small $s$}


}
\endpspicture
\caption{The diadem graph resulting from the corona $G=(K_3 \cup K_2) \circ K_1$ by adding a leaf to the support vertex $s$ of $G$.}\label{fig:example_diadem_chandelier}
\end{center}
\end{figure}



\begin{lem}\label{lem:diadem_n2}
If $G$ is a diadem graph of order $n$, then $\gamma_{\cer}(G)=n-2$. \end{lem}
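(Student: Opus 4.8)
The plan is to establish the bound $\gamma_{\cer}(G)=n-2$ by proving the two inequalities $\gamma_{\cer}(G)\le n-2$ and $\gamma_{\cer}(G)\ge n-2$ separately. Throughout, write $G$ as a diadem graph obtained from the corona $H\circ K_1$ by adding a new vertex $v$ adjacent to a support vertex $s$ of $H\circ K_1$; let $\ell$ be the leaf of $H\circ K_1$ attached to $s$ in the corona, so that in $G$ the vertex $s$ is a strong support with the two leaf-neighbours $\ell$ and $v$.

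For the upper bound, I would simply exhibit a certified dominating set of size $n-2$. The natural candidate is $D_\c=V_G\setminus\{\ell,v\}$: every vertex of $H$ other than $s$ keeps its private leaf (so it has at least two neighbours outside $D_\c$? no --- exactly one; here I need to be careful). Actually the cleaner choice is to invoke Observation~3.\ref{obs:strong_support}: since $s$ is a strong support adjacent to the two leaves $\ell$ and $v$, and these are the only two leaves removed, the set $V_G\setminus\{\ell,v\}$ dominates $G$ and every vertex in it has either zero or at least two neighbours outside it --- in particular, $s$ has exactly the two neighbours $\ell,v$ outside, and every other non-removed vertex has all its neighbours inside $D_\c$. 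This gives $\gamma_{\cer}(G)\le n-2$ directly, with no case analysis.

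For the lower bound, let $D_\c$ be any $\gamma_{\cer}$-set of $G$; I must show $|V_G\setminus D_\c|\le 2$. Here is where the argument needs care, and this is the step I expect to be the main obstacle. By Observation~3.\ref{obs:support}, every support vertex of $G$ lies in $D_\c$; the supports of $G$ are the supports of $H\circ K_1$ (the vertices of $H$), all of which are therefore in $D_\c$. A vertex of $V_G\setminus D_\c$ must then be either a corona-leaf or the extra vertex $v$. If a corona-leaf $\ell'$ attached to a support $t\ne s$ were outside $D_\c$, then $t$ would have $\ell'$ as its unique neighbour outside $D_\c$ (since all other neighbours of $t$ --- namely its neighbours in $H$ --- are supports, hence in $D_\c$), making $t$ half-shadowed, contradicting that $D_\c$ is certified. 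So the only vertices that can lie outside $D_\c$ are $\ell$ and $v$, whence $|V_G\setminus D_\c|\le 2$ and $\gamma_{\cer}(G)\ge n-2$.

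Combining, $\gamma_{\cer}(G)=n-2$. The only subtlety to handle cleanly is the degenerate cases where $H$ is very small (e.g. $H=K_1$, so $H\circ K_1=K_2$ and $G=P_3$, for which $n-2=1=\gamma_{\cer}(P_3)$ by Observation~2.\ref{obs:Pn}), which should be checked to confirm no vertex of $H$ other than $s$ exists but the argument still goes through; I would dispose of these small orders first and then run the general argument assuming $n$ is large enough that all the invoked structural facts (strong support $s$, supports exactly $V_H$) are literally true.
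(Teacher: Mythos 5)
Your proposal is correct and follows essentially the same route as the paper: the upper bound by exhibiting $V_G\setminus\{\ell,v\}$ (equivalently, citing Observation~3.\ref{obs:strong_support} for the unique strong support $s$ with its two leaves), and the lower bound by forcing all supports into $D_\c$ via Observation~3.\ref{obs:support} and then forcing every other corona-leaf into $D_\c$ because its weak support would otherwise be half-shadowed. No substantive difference from the paper's argument.
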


\begin{proof}Let $s$ be the unique strong support of $G$, and let $l_1,l_2$
be the two leaves of $G$ adjacent to $s$ in $G$. It is obvious that $V_G
\setminus \{l_1,l_2\}$ is a certified dominating set of $G$. Let $D_\c$ be a smallest certified dominating set of $G$. Then $V_G \setminus L_G\subseteq D_\c$
(by Observation~3.\ref{obs:support}) and $\{l_1,l_2\}\cap D_\c=\emptyset$. Moreover,
every leaf $l$ different from $l_1$ and $l_2$  belongs to $D_\c$ (otherwise $s_G(l)$
would be half-shadowed). Consequently $D_\c=V_G\setminus \{l_1,l_2\}$ and therefore $\gamma_\cer(G)=n-2$.\end{proof}

\begin{lem}\label{abc} Let $G$ be a connected graph of order $n$. If $\gamma_{\cer}(G)=n-2$, then  $G=C_3$, $G=C_4$, or $G$ is a diadem graph
$($of a connected graph$)$. \end{lem}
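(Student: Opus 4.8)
The plan is to reduce to the case $\delta(G)=1$ and then to study a $\gamma_{\cer}$-set $D_\c$ of $G$ together with the two-element set $V_G\setminus D_\c$, exploiting minimality of $|D_\c|$ throughout.

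\medskip
\noindent\emph{Step 1 (minimum degree at least two).} If $\delta(G)\ge 2$, then $\gamma_{\cer}(G)=\gamma(G)$ by Corollary~4.\ref{wn:gammacer_is_gamma_delta}, so $\gamma(G)=n-2$; since a connected graph on $n\ge 2$ vertices satisfies $\gamma(G)\le n/2$, this forces $n\le 4$, and a direct inspection of the connected graphs on at most four vertices with minimum degree $\ge 2$ leaves exactly $C_3$ and $C_4$. So from now on I assume $\delta(G)=1$; in particular $n\ge 3$ and $G$ has a leaf.

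\medskip
\noindent\emph{Step 2 (the configuration).} Fix a $\gamma_{\cer}$-set $D_\c$ and write $V_G\setminus D_\c=\{a,b\}$. Because $|V_G\setminus D_\c|=2$, every vertex of $D_\c$ is adjacent to neither or to both of $a,b$; and since $D_\c$ dominates $a$ and $b$, the set $A:=N_G(a)\cap D_\c=N_G(b)\cap D_\c$ is nonempty and each vertex of $A$ is adjacent to both $a$ and $b$. Also every support of $G$ lies in $D_\c$ by Observation~3.\ref{obs:support}. Put $B:=D_\c\setminus A$.

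\medskip
\noindent\emph{Step 3 ($\{a,b\}$ are the two leaves of a strong support).} I first claim that if one of $a,b$ is a leaf, then both are, they have a common support $s$, and $s$ is a strong support whose only leaves are $a$ and $b$. Indeed, if $a\in L_G$ then $A=\{\supp_G(a)\}$, and since every vertex of $A$ is adjacent to $b$, the vertex $\supp_G(a)$ is the unique neighbour of $b$, so $b\in L_G$ with $\supp_G(b)=\supp_G(a)=:s$; moreover, if $s$ had a third leaf $c$ (necessarily $c\in D_\c$), then $D_\c\setminus\{c\}$ would still be a certified dominating set, contradicting minimality. Next I claim that the alternative — neither $a$ nor $b$ is a leaf — is impossible: for any leaf $\ell_0\in D_\c$ with support $s_0$, minimality forces $s_0\notin A$ (otherwise $D_\c\setminus\{\ell_0\}$ is certified of size $n-3$) and forces $s_0$ to be a weak support (otherwise deleting its two leaves gives a certified set of size $n-4$); one then plays the same game with the set $(B\setminus\{\ell_0\})\cup\{a,b\}$ and its variants — splitting into the cases $a\sim b$, $a\not\sim b$, and $|A|=1,2,\ge 3$, and tracking which vertices of $B$ have exactly one neighbour in $A$ — to produce a certified dominating set of size at most $n-3$, a contradiction.

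\medskip
\noindent\emph{Step 4 (reconstruction).} So $\{a,b\}=\{\ell_1,\ell_2\}$ are the two leaves of a strong support $s$, and $s$ has no further leaf. The same deletion argument now shows that every other leaf of $G$ has its support in $B$ (hence all remaining supports are weak) and that $D_\c$ contains no vertex that is simultaneously a non-leaf and a non-support. Consequently $D_\c\setminus\{s\}$ splits into the weak supports of $G$ and their unique leaves, matched bijectively; and since deleting all leaves of the connected graph $G$ leaves a connected graph $H$ on the set of supports of $G$, all edges of $G$ lie either inside $H$ or between a support and one of its leaves. Therefore $G$ arises from $H\circ K_1$ by adding the single extra leaf $\ell_1$ at $s$; that is, $G$ is the diadem graph of $H$ (and $H=K_1$, giving $G=P_3$, is permitted). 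The step I expect to be the real obstacle is the second half of Step 3 — excluding that $a$ and $b$ are both non-leaves — together with the twin claim in Step 4 that $D_\c$ has no superfluous (non-leaf, non-support) vertex: both reduce to exhibiting a certified dominating set smaller than $D_\c$, but since the required deletions must be accompanied by the removal of suitable leaves in order to re-certify the affected vertices, a somewhat delicate case analysis on the adjacencies among $a$, $b$, and $A$ seems unavoidable.
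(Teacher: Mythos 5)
Your Steps 1 and 2 are fine and match the paper's opening reduction, and the first half of Step 3 (if one of $a,b$ is a leaf then both are leaves of a common strong support with no third leaf) is a correct and complete exchange argument. But the proof has a genuine gap exactly where you flag it: the two claims that carry the entire difficulty of the lemma --- (i) that $a$ and $b$ cannot both be non-leaves, and (ii) that $D_\c$ contains no shadowed vertex other than weak supports and their leaves --- are not proved. For (i) you only establish that a leaf's support $s_0$ must lie in $B$ and be weak, and then write that one ``plays the same game with $(B\setminus\{\ell_0\})\cup\{a,b\}$ and its variants'' through an unspecified case analysis; no candidate smaller certified dominating set is actually exhibited, and it is not evident one exists by a single local exchange, because deleting $\ell_0$ leaves $s_0$ half-shadowed and the repair cascades through $s_0$'s other shadowed neighbours, which may themselves be weak supports whose leaves must also be removed, and so on. Claim (ii) is essentially Lemma~6.\ref{lem:three_properties}(a) of the paper, whose proof is itself a nontrivial case analysis resting on the corona characterisation of Theorem~5.\ref{thm:corona}; asserting that ``the same deletion argument'' gives it is not a proof.

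The paper gets structural control over precisely this cascading difficulty by a different device: it takes a \emph{smallest} counterexample, passes to $G'=G-N_G[\{v,u\}]$, observes that $\gamma_\cer(G')=|V_{G'}|$ so that Theorem~5.\ref{thm:corona} forces every component of $G'$ to be a corona, and then eliminates $K_2$-components by induction on the minimal counterexample before verifying properties (1)--(3) that pin down the diadem structure. Your purely local-exchange plan would need to reconstruct an equivalent of that corona structure on the shadowed part of $D_\c$ before the deletions you invoke can be certified; as written, Steps 3 (second half) and 4 assert the conclusion of that analysis rather than performing it, so the argument is incomplete.
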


\begin{proof2}
If $G$ is a connected graph of order at most $n \le 4$ and $\gamma_\cer(G)=n-2$, then $G=K_{1,2}$, $G=C_3$ or $G=C_4$. Thus assume that $n \ge 5$. In this case $\delta (G)=1$, as otherwise, since $\gamma_{\cer}(G)=\gamma(G)$ (by  Corollary~4.\ref{wn:gammacer_is_gamma_delta}), $\gamma_{\cer}(G)=n-2$,  and $\gamma(G)\leq \frac{n}{2}$, we would have  $n-2=\gamma_{\cer}(G)=\gamma(G)\leq \frac{n}{2}$, which is impossible. We now claim that $G$ is
a~diadem graph.

By way of contradiction, suppose that the claim is false. Let $G$ be a smallest counterexample, say of order $n$ ($n\ge 5$), such that  $\gamma_{\cer}(G)=n-2$ and
$G$ is not a diadem graph.
 Let $D_\c$ be a $\gamma_\cer$-set of $G$, and let $v$ and $u$ be the only elements of $V_G \setminus D_\c$. From the fact that $D_\c=V_G\setminus \{v,u\}$ is a certified dominating set
of $G$ it follows that if $x\in D_\c$, then either $x\in N_G(v)\cap N_G(u)$ or $x\not\in
N_G(v)\cup N_G(u)$. This proves that $N_G(v)\cap D_\c= N_G(u)\cap D_\c$. In addition, the set
$V_G\setminus N_G[\{v,u\}]$ is nonempty, as otherwise $\{v,u\}$ would be a certified dominating set of $G$ and we would have $n-2=\gamma_\cer(G)\le |\{v,u\}|=2$, which is impossible.

Let $G'$ denote the subgraph $G- N_G[\{v,u\}]$ of $G$. From the assumption $\gamma_\cer(G) = n-2$ it easily follows that $\gamma_\cer(G')=|V_{G'}|$.
Thus, by
Theorem~5.\ref{thm:corona}, every connected component of $G'$ is an isolated vertex
or the corona of a graph.


Let $H$ be a connected component of $G'$. From the fact that
$D_\c=V_G\setminus \{v,u\}$ is a~minimum certified dominating set of $G$ it follows that
at least one vertex of $H$ is not adjacent to any vertex belonging to $N_G[\{v,u\}]
\setminus \{v,u\}$ as otherwise $D_\c\setminus V_H$ would be a~certified dominating set of $G$, which is impossible as $\gamma_\cer(G)\le |D_\c\setminus V_H|<|D_\c|=\gamma_\cer(G)$. From this we conclude that $G'$ has no isolated vertex. Consequently, every connected component of $G'$ is the corona of a graph.

We now claim that $K_2$ is not a connected component of $G'$. Suppose on the contrary that $K_2$ on vertices  $a$ and $b$ is a connected component of $G'$. Then one of the vertices $a$ and $b$ is a leaf in $G$ and the latter one is adjacent to a vertex in $N_G[\{v,u\}]\setminus\{v,u\}$, say $a\in L_G$ and $b$ is adjacent to a vertex $w\in N_G[\{v,u\}]\setminus\{v,u\}$. Let $\widetilde{G}$ denote the graph $G-\{a, b\}$ (of order $n-2$). For this graph either $\gamma_\cer(\widetilde{G})< n-4$, or $\gamma_\cer(\widetilde{G})=n-4$, or $\gamma_\cer(\widetilde{G})>n-4$. Assume first that $\gamma_\cer(\widetilde{G})< n-4$.
Let $\widetilde{D}_\c$ be a smallest certified dominating set of $\widetilde{G}$. Then
$\widetilde{D}_\c\cup \{b\}$ (if $(N_G(b)\setminus\{a\})\setminus  \widetilde{D}_\c \not=\emptyset$) or $\widetilde{D}_\c\cup \{a,b\}$ (if $N_G(b)\setminus\{a\}\subseteq \widetilde{D}_\c$) is a certified dominating set of $G$ and $\gamma_\cer(G)\le |\widetilde{D}_\c\cup \{a,b\}|= \gamma_\cer(\widetilde{G})+2< n-2$, a contradiction.
Assume now that $\gamma_\cer(\widetilde{G})>n-4$. Then $\gamma_\cer(\widetilde{G})=n-2
=|V_{\widetilde{G}}|$ and, by Theorem~5.\ref{thm:corona},  $\widetilde{G}$ is
the corona of a graph. But this is impossible as no vertex of $N_G[\{v,u\}] \setminus \{v,u\}$ is a leaf or a neighbour of exactly one leaf. Finally, assume that $\gamma_\cer( \widetilde{G})=n-4 = |V_{\widetilde{G}}|-2$. In this case the choice of $G$ implies
that $\widetilde{G}$ is the diadem graph in which $v$ and $u$ are leaves and $w$ is
their only common neighbour. Now, it is obvious that the graph $G$ (obtained from $\widetilde{G}$ by the addition of the vertices $a$ and $b$, and the edges $ab$
and $bw$) is a diadem graph, a~contradiction.

Now, to complete the proof, it suffices to show that this smallest counterexample
is not a counterexample, that is, it suffices to show that $G$ is a diadem graph.
It is enough to prove that: (1) no vertex belonging to $N_G[\{v,u\}]$ is adjacent to
a leaf of a connected component of $G'$ of order at least four, (2) $v$ and $u$ have exactly one common neighbour, and (3) $v$ and $u$ are not adjacent in $G$.

\begin{itemize}
\item[(1)] Suppose on the contrary that there is a vertex in $N_G[\{v,u\}]$ adjacent to a leaf $l$ of a connected component $H$ (of order at least four) of $G'$. Let $L$ be the set of leaves of $H$ within the distance at most $2$ from $s_H(l)$. Then $D=D_\c \setminus \big(L \cup \{s_{H}(l)\}\big)$ is a~certified dominating set of $G$ and $|D|<|D_\c|$, a contradiction.

\item[(2)]   Suppose on the contrary that $|N_G[\{v,u\}] \setminus \{v,u\}| \ge 2$.
 Let us consider the set $S=\{x\in V_{G'}
\colon N_G(x)\cap N_G(\{v,u\})\not= \emptyset\}$. By (1), $S$ is a subset of
$V_{G'} \setminus L_{G'}$. In addition, since $G'$ is the corona of a graph,
every vertex of $S$ is adjacent to a vertex of $L_{G'}$. From the supposition
$|N_G[\{v,u\}] \setminus \{v,u\}| \ge 2$ and from properties of elements of $S$
it follows that $D= \{v,u\}\cup (V_{G'} \setminus L_{G'})\cup (L_{G'}\setminus
N_{G'}(S))$ ($=\{v,u\}\cup (V_{G'} \setminus (N_G(S)\cap L_G))$) is a~certified dominating set
of $G$ and $|D|<|D_\c|$, a  contradiction.

\item[(3)]  Suppose on the contrary that $vu \in E_G$, and consider the graph $G''=G-vu$ of
order $n$, in which, by (2), $v$ and $u$ are leaves, and they have exactly one common neighbour, say $w$. In this graph we have either $\gamma_\cer(G'') >n-2$ (and therefore
$\gamma_\cer(G'') =n$), or $\gamma_\cer(G'')=n-2$, or $\gamma_\cer(G'')<n-2$.
Assume first that $\gamma_\cer(G'')=n$. Then, by Theorem~5.\ref{thm:corona}, $G''$ is the corona of a graph, but this is impossible as leaves $v$ and $v$ share the same neighbour $w$. Assume now that $\gamma_\cer(G'')=n-2$. Then, by the choice of $G$,  $G''$ is a diadem graph. Let $L$ be the set of leaves of $G''$ within the distance at most $3$ from $v$ (and $u$). Then $D=(D_\c\setminus (L \cup \{w\})) \cup \{v\}$ is a certified dominating set of $G$ and $|D|<|D_\c|$, a contradiction.
Finally, assume that $\gamma_\cer(G'')<n-2$. Let $D''_\c$ be a smallest
certified dominating of $G''$.
Since $w$ is a strong support of $G''$,
$w \in D''_\c$ by Observation~3.\ref{obs:support},
and $v,u \notin D''_\c$ by minimality of $D''_\c$.
But then, $D''_\c$ is also a~certified dominating set of $G$ and so $\gamma_\cer(G) < n-2$,
a~final contradiction. \eop \end{itemize}\end{proof2}

From Theorem~5.\ref{thm:corona}, Lemma~5.\ref{lem:diadem_n2} and Lemma~5.\ref{abc}, we have the final characterisation of graphs of order $n$ with $\gamma_\cer=n-2$.

\begin{thm}\label{thm:diadem}
Let $G$ be a graph of order $n \ge 3$. Then
$\gamma_\cer(G)=n-2$ if and only if $G$ is $C_3, C_4$, or a diadem graph,
or $G$ is one of these three graphs with possible number of isolated vertices,
or $G$ is the union of one of these three graphs
with the corona of some graph, with possible number of isolated vertices.\eop
\end{thm}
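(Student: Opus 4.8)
The plan is to derive Theorem~5.\ref{thm:diadem} by assembling the pieces already in place. The forward direction (graphs with $\gamma_\cer=n-2$ are of the claimed form) follows from Observation~2.\ref{obs:many_compo} together with Lemma~5.\ref{abc} and Theorem~5.\ref{thm:corona}; the backward direction follows from Observation~2.\ref{obs:many_compo}, Lemma~5.\ref{lem:diadem_n2}, Observation~2.\ref{obs:Cn} (which gives $\gamma_\cer(C_3)=\gamma_\cer(C_4)=\ldots$, actually $\gamma_\cer(C_3)=1=3-2$ and $\gamma_\cer(C_4)=2=4-2$), and Theorem~5.\ref{thm:corona}. So the whole argument is a bookkeeping of how the certified domination number behaves under disjoint unions.

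\medskip
\noindent\textsl{Proof.} Write $G$ as the disjoint union of its connected components $G_1,\ldots,G_k$, with $|V_{G_i}|=n_i$, so $n=\sum_i n_i$, and recall from Observation~2.\ref{obs:many_compo} that $\gamma_\cer(G)=\sum_{i=1}^k\gamma_\cer(G_i)$. Since each $\gamma_\cer(G_i)\le n_i$, we have $\gamma_\cer(G)\le n$ with equality on the ``deficit'' $n-\gamma_\cer(G)=\sum_i\bigl(n_i-\gamma_\cer(G_i)\bigr)$, a sum of nonnegative integers. Hence $\gamma_\cer(G)=n-2$ if and only if the multiset of deficits $\{n_i-\gamma_\cer(G_i)\}_{i=1}^k$ sums to $2$; equivalently, either exactly one component has deficit $2$ and all others deficit $0$, or exactly two components have deficit $1$ and all others deficit $0$.

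\medskip
\noindent First I would identify the components of deficit $0$: by definition these are the connected graphs $H$ with $\gamma_\cer(H)=|V_H|$, which by Theorem~5.\ref{thm:corona} are exactly $K_1$ (the complement of $K_1$) and the coronas of connected graphs (a connected $\overline{K_m}$ forces $m=1$). Next, the components of deficit $1$: a connected graph $H$ with $\gamma_\cer(H)=|V_H|-1$ does not exist, since the excerpt already records that $\gamma_\cer(H)\ne|V_H|-1$ for every graph $H$. Therefore the two-components-of-deficit-$1$ case is vacuous, and $\gamma_\cer(G)=n-2$ forces exactly one component $H$ of deficit $2$ and all remaining components of deficit $0$. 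Finally, the components of deficit $2$: a connected graph $H$ with $\gamma_\cer(H)=|V_H|-2$ is, by Lemma~5.\ref{abc}, one of $C_3$, $C_4$, or a diadem graph of a connected graph; conversely, Observation~2.\ref{obs:Cn} gives $\gamma_\cer(C_3)=1$ and $\gamma_\cer(C_4)=2$, while Lemma~5.\ref{lem:diadem_n2} gives $\gamma_\cer=|V|-2$ for every diadem graph, so these are precisely the connected graphs of deficit $2$.

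\medskip
\noindent Combining the three classifications: $\gamma_\cer(G)=n-2$ if and only if $G$ is the disjoint union of exactly one graph among $\{C_3,C_4,\text{a diadem graph}\}$ with zero or more components, each of which is either $K_1$ or the corona of a connected graph. Grouping the $K_1$'s as ``isolated vertices'' and noting that a disjoint union of coronas of connected graphs is the corona of the (disconnected) union, this is exactly the statement: $G$ is one of $C_3,C_4$, a diadem graph, possibly with isolated vertices added, or one of these three graphs together with the corona of some graph, possibly with isolated vertices added. \eop

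\bigskip
\noindent The only step that requires any thought beyond citation-chasing is the observation that the deficit-$1$ case cannot occur, which is why exactly one ``exceptional'' component appears rather than possibly two; this hinges on the early remark that $\gamma_\cer$ never equals $n-1$, and without it the statement of the theorem would need an extra clause. I expect no genuine obstacle here — the work was all done in Lemma~5.\ref{abc} and Theorem~5.\ref{thm:corona}; this theorem is their routine corollary via additivity over components.
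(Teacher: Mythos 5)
Your proposal is correct and matches the paper's intent exactly: the paper derives Theorem~5.\ref{thm:diadem} by simply citing Theorem~5.\ref{thm:corona}, Lemma~5.\ref{lem:diadem_n2} and Lemma~5.\ref{abc}, leaving the component-wise bookkeeping implicit, and your deficit argument (using Observation~2.\ref{obs:many_compo}, the fact that $\gamma_\cer\neq n-1$ to rule out two deficit-$1$ components, and the classification of deficit-$0$ components via Theorem~5.\ref{thm:corona}) is precisely the assembly the authors have in mind. No gaps.
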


\section{Influence of deleting/adding edge/vertex}\label{sec:influence}
In this section, following~\cite{BHHH15,BVV15,CFFS12,EG12},
to mention but a recent few,
we examine the effects on the certified domination number
when the graph is modified by deleting/adding an edge or a vertex.
We observe that deleting an edge or a vertex may arbitrarily increase the certified domination number.
For example, for the graph $G_i$ of order $2i+4$ illustrated in Fig.~\ref{fig:edge_removing_adding}(a)
we have $\gamma_\cer(G_i)=i+1$ and $\gamma_\cer(G_i - e)= 2i+4$.
To argue a similar influence of deleting a vertex, consider a wheel graph $W_n$ with the hub $v$. We have $\gamma_\cer(W_n)=1$
and $\gamma_\cer(W_n - v)=\lceil \frac{n-1}{3}\rceil$.

\begin{figure}[!h]
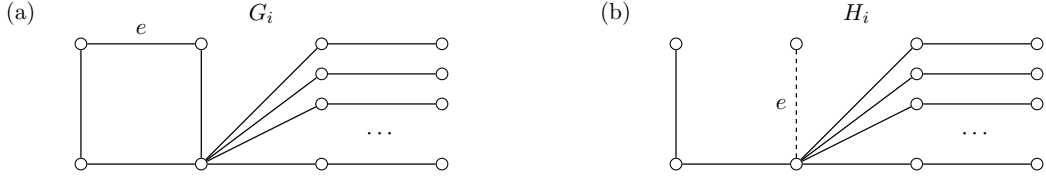

\begin{center}
\pspicture(0,-0.2)(7.8,2.2)\scalebox{0.8}{
\cnode[linewidth=0.5pt,fillstyle=solid,fillcolor=white,linecolor=black](0,2){3pt}{v1}
\cnode[linewidth=0.5pt,fillstyle=solid,fillcolor=white,linecolor=black](0,0){3pt}{v2}
\cnode[linewidth=0.5pt,fillstyle=solid,fillcolor=white,linecolor=black](2,0){3pt}{v3}
\cnode[linewidth=0.5pt,fillstyle=solid,fillcolor=white,linecolor=black](2,2){3pt}{v4}
\cnode[linewidth=0.5pt,fillstyle=solid,fillcolor=white,linecolor=black](4,0){3pt}{l11}
\cnode[linewidth=0.5pt,fillstyle=solid,fillcolor=white,linecolor=black](6,0){3pt}{l12}
\cnode[linewidth=0.5pt,fillstyle=solid,fillcolor=white,linecolor=black](4,1){3pt}{l21}
\cnode[linewidth=0.5pt,fillstyle=solid,fillcolor=white,linecolor=black](6,1){3pt}{l22}
\cnode[linewidth=0.5pt,fillstyle=solid,fillcolor=white,linecolor=black](4,1.5){3pt}{l31}
\cnode[linewidth=0.5pt,fillstyle=solid,fillcolor=white,linecolor=black](6,1.5){3pt}{l32}
\cnode[linewidth=0.5pt,fillstyle=solid,fillcolor=white,linecolor=black](4,2){3pt}{l41}
\cnode[linewidth=0.5pt,fillstyle=solid,fillcolor=white,linecolor=black](6,2){3pt}{l42}

\ncline[linewidth=0.6pt]{v2}{v1}
\ncline[linewidth=0.6pt]{v2}{v3}
\ncline[linewidth=0.6pt]{v4}{v3}
\ncline[linewidth=0.6pt]{v4}{v1}
\ncline[linewidth=0.6pt]{v3}{l11}
\ncline[linewidth=0.6pt]{l12}{l11}
\ncline[linewidth=0.6pt]{v3}{l21}
\ncline[linewidth=0.6pt]{l22}{l21}
\ncline[linewidth=0.6pt]{v3}{l31}
\ncline[linewidth=0.6pt]{l32}{l31}
\ncline[linewidth=0.6pt]{v3}{l41}
\ncline[linewidth=0.6pt]{l42}{l41}

\rput(1,2.25){\small $e$}
\rput(5,.5){\small $\cdots$}

\rput(3,2.5){\small $G_i$}
\rput(-1,2.5){\small (a)}

}
\endpspicture
\pspicture(0,-0.2)(4.8,1.84)\scalebox{0.8}{

\cnode[linewidth=0.5pt,fillstyle=solid,fillcolor=white,linecolor=black](0,2){3pt}{v1}
\cnode[linewidth=0.5pt,fillstyle=solid,fillcolor=white,linecolor=black](0,0){3pt}{v2}
\cnode[linewidth=0.5pt,fillstyle=solid,fillcolor=white,linecolor=black](2,0){3pt}{v3}
\cnode[linewidth=0.5pt,fillstyle=solid,fillcolor=white,linecolor=black](2,2){3pt}{v4}
\cnode[linewidth=0.5pt,fillstyle=solid,fillcolor=white,linecolor=black](4,0){3pt}{l11}
\cnode[linewidth=0.5pt,fillstyle=solid,fillcolor=white,linecolor=black](6,0){3pt}{l12}
\cnode[linewidth=0.5pt,fillstyle=solid,fillcolor=white,linecolor=black](4,1){3pt}{l21}
\cnode[linewidth=0.5pt,fillstyle=solid,fillcolor=white,linecolor=black](6,1){3pt}{l22}
\cnode[linewidth=0.5pt,fillstyle=solid,fillcolor=white,linecolor=black](4,1.5){3pt}{l31}
\cnode[linewidth=0.5pt,fillstyle=solid,fillcolor=white,linecolor=black](6,1.5){3pt}{l32}
\cnode[linewidth=0.5pt,fillstyle=solid,fillcolor=white,linecolor=black](4,2){3pt}{l41}
\cnode[linewidth=0.5pt,fillstyle=solid,fillcolor=white,linecolor=black](6,2){3pt}{l42}

\ncline[linewidth=0.6pt]{v2}{v1}
\ncline[linewidth=0.6pt]{v2}{v3}
\ncline[linewidth=0.6pt,linestyle=dashed,dash=2pt 2pt]{v4}{v3}
\ncline[linewidth=0.6pt]{v3}{l11}
\ncline[linewidth=0.6pt]{l12}{l11}
\ncline[linewidth=0.6pt]{v3}{l21}
\ncline[linewidth=0.6pt]{l22}{l21}
\ncline[linewidth=0.6pt]{v3}{l31}
\ncline[linewidth=0.6pt]{l32}{l31}
\ncline[linewidth=0.6pt]{v3}{l41}
\ncline[linewidth=0.6pt]{l42}{l41}

\rput(1.75,1){\small $e$}
\rput(5,.5){\small $\cdots$}

\rput(3,2.5){\small $H_i$}
\rput(-1,2.5){\small (b)}

}
\endpspicture
\caption{Adding or deleting an edge may arbitrarily increase the certified domination number.
}\label{fig:edge_removing_adding}
\end{center}
\end{figure}

Adding an edge to a graph
may also arbitrarily increase the certified domination number.
Namely, consider the disconnected graph $H_i$ of order $2i+4$ illustrated in Fig.~\ref{fig:edge_removing_adding}(b).
We have $\gamma_\cer(H_i)= i+2$ and $\gamma_\cer(H_i + e)= 2i+4$.
However, adding an edge to a connected graph does not increase
the certified domination number, that is,
$\gamma_\cer(G+e) \le \gamma_\cer(G)$ for any connected graph~$G$.
To argue this property, we use the following lemma.

\begin{lem}\label{lem:three_properties}
Let $D_\c$ be a $\gamma_\cer$-set of a connected graph $G$ of order $n \ge 2$. Then:
\begin{itemize}
\item[$a)$] Every shadowed vertex in $D_\c$ is a weak support or a leaf.
    \item[$b)$] Every non-leaf neighbour of a shadowed weak support
    is either an illuminated vertex or a shadowed weak support.
\end{itemize}
\end{lem}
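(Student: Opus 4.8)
The plan is to analyze the structure of a $\gamma_\cer$-set $D_\c$ locally around shadowed vertices and exploit minimality, much in the spirit of the proofs of Observation~3.\ref{obs:support} and Corollary~4.\ref{wn:gammacer_is_gamma1}. Throughout, I will repeatedly use the basic exchange argument: if replacing (or removing) certain vertices of $D_\c$ still yields a certified dominating set of no larger cardinality, we obtain a contradiction with $|D_\c|=\gamma_\cer(G)$, or at least a structural restriction.

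For part $a)$, let $v \in D_\c$ be shadowed, so every neighbour of $v$ lies in $D_\c$. First I would dispose of the case $\deg_G(v)=1$: then $v$ is a leaf and we are done. So suppose $\deg_G(v)\ge 2$ and $v$ is not a weak support; I want a contradiction. If $v$ has no leaf-neighbour at all, the idea is that $D_\c \setminus \{v\}$ is still dominating (every neighbour of $v$ is in $D_\c$, hence still dominated, and $v$ itself is dominated by any of its neighbours since $\deg_G(v)\ge 2 \ge 1$), and it remains certified because removing $v$ can only turn a neighbour $w\in D_\c$ from some state into a state with \emph{more} neighbours outside $D_\c$ — so no half-shadowed vertex is created; this contradicts minimality. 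The remaining subcase is that $v$ has exactly one leaf-neighbour $\ell$ (it cannot have two or more, else $v$ would be a strong support, and a strong support is certainly not shadowed since its leaf-neighbours lie outside $D_\c$ — wait, leaves may be in $D_\c$; but if both leaves were in $D_\c$ each would be half-shadowed, impossible, so at least one leaf of a strong support is outside $D_\c$, contradicting $v$ shadowed). So $v$ has exactly one leaf-neighbour $\ell\in D_\c$, making $v$ a weak support — contradicting our assumption. Hence $v$ is a weak support or a leaf.

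For part $b)$, let $s$ be a shadowed weak support with leaf $\ell=\leaf_G(s)\in D_\c$, and let $w\in N_G(s)\setminus\{\ell\}$ be a non-leaf neighbour; since $s$ is shadowed, $w\in D_\c$. I must show $w$ is illuminated or is itself a shadowed weak support; equivalently, rule out that $w$ is half-shadowed and rule out that $w$ is a \emph{strong} support that is shadowed and — more importantly — rule out that $w$ is shadowed but \emph{not} a weak support. That $w$ is not half-shadowed is immediate: $D_\c$ is certified, hence has no half-shadowed vertex. So suppose $w$ is shadowed. If $w$ is a weak support we are done, so assume $w$ is shadowed and not a weak support; applying part $a)$ to $w$ forces $w$ to be a leaf — but $w$ is a non-leaf by hypothesis, contradiction. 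This handles $b)$ cleanly once $a)$ is in hand.

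The main obstacle is getting part $a)$ airtight in the subcase where $v$ has a leaf-neighbour: one must be careful that ``exactly one leaf-neighbour'' really does imply ``weak support'' (it does, by definition) and that ``two or more leaf-neighbours'' is genuinely incompatible with being shadowed (it is, since a strong support always has a leaf outside any certified dominating set). A secondary subtlety is the removal argument when $v$ has no leaf-neighbour: I must confirm that deleting $v$ does not leave $v$'s neighbours underdominated — this is where connectedness and $n\ge 2$ enter, guaranteeing $\deg_G(v)\ge 1$ so that $v$ has a dominator in $D_\c\setminus\{v\}$, and that no neighbour of $v$ becomes half-shadowed, which holds because deleting a vertex of $D_\c$ weakly increases every other $D_\c$-vertex's count of outside-neighbours. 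Once these two points are nailed down, the rest is routine casework.
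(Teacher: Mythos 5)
There is a genuine gap in your proof of part $a)$, in the subcase where the shadowed vertex $v$ has no leaf-neighbour. You delete $v$ from $D_\c$ and claim the result stays certified ``because deleting a vertex of $D_\c$ weakly increases every other $D_\c$-vertex's count of outside-neighbours --- so no half-shadowed vertex is created.'' This is exactly backwards: the forbidden state is having \emph{exactly one} outside neighbour, so raising a neighbour's count from $0$ to $1$ is precisely how a half-shadowed vertex is created. Every neighbour $w$ of $v$ lies in $D_\c$ (as $v$ is shadowed) and is either illuminated or shadowed; if some $w$ is shadowed, then after deleting $v$ it has exactly one neighbour outside the set, namely $v$ itself, and $D_\c\setminus\{v\}$ is not certified. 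Your argument therefore works only if \emph{all} neighbours of $v$ are illuminated, which you have not established --- ruling out clusters of mutually adjacent shadowed vertices is the whole difficulty of this lemma. The paper handles it by taking a maximal connected set $X$ of shadowed vertices containing $v$ (none of them a shadowed weak support), noting that every $D_\c$-neighbour of $X$ outside $X$ is then illuminated or a shadowed weak support, and splitting into cases according to $\gamma_\cer(G[X])$, using Theorem~5.\ref{thm:corona} to pin down the structure of $G[X]$ when $\gamma_\cer(G[X])=|X|$; in each case one excises a carefully chosen set that also includes nearby shadowed leaves, so that the weak supports left behind become illuminated rather than half-shadowed, contradicting minimality.

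A second, smaller error: to exclude that $v$ is a strong support you argue that if both leaf-neighbours of $v$ were in $D_\c$ then ``each would be half-shadowed, impossible.'' They would not: a leaf whose unique neighbour lies in $D_\c$ has \emph{zero} neighbours outside $D_\c$, i.e., it is shadowed, which the certified condition permits. The correct reason a shadowed strong support cannot occur in a $\gamma_\cer$-set is minimality: deleting two of its leaf-neighbours from $D_\c$ leaves that support illuminated and the deleted leaves dominated, producing a strictly smaller certified dominating set. Your part $b)$ is fine and coincides with the paper's argument, but it rests entirely on part $a)$.
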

\begin{proof} (a) Consider a shadowed vertex $v \in D_\c$.
Suppose on the contrary that $v$ is neither a weak support
nor a leaf in $G$. By minimality of $D_\c$, there are no shadowed strong supports in $D_\c$,
 in particular, $v$ is not a strong support, and thus
all neighbours of $v$ are of degree at least two.  Let $X \subseteq D_\c $
be a maximal subset of shadowed vertices such that
(i) $v \in X$, (ii) the induced subgraph $G[X]$ is connected,
and (iii) none of elements of $X$ is an illuminated vertex
or a shadowed weak support.
Next, for a vertex $x\in X$, define the set $B_G(x) = N_G(x) \setminus X$.
Analogously, define the set $B_G(X) = \bigcup_{x \in X} B_G(x)$.

Observe that by minimality of $D_\c$, 
each vertex $x \in X$ is a non-support vertex,
and by the choice of $X$,
and every element in $B_G(X)$ is either an illuminated vertex
or a shadowed weak support.
\\[2mm]
\underline{Case $1$}: {\em $G[X]$ is a $1$-vertex graph}.
Let $L$ be the set of shadowed leaves within the distance $2$ from $v$.
Then the set $D=D_\c \setminus (L \cup \{v\}\big)$
would be a certified dominating set of $G$ and $|D|<|D_\c|$, a contradiction.
\\[2mm]
\underline{Case $2$}: {\em $|X| \ge 2$ and $\gamma_\cer(G[X]) = |X|$}.
By Theorem~5.\ref{thm:corona}, $G[X]$ is the corona of some connected graph.
Observe that by the choice of $X$ and minimality of $D_\c$,
if $x \in X$ is a leaf of $G[X]$, then the set $B_G(x)$ is non-empty.

Consider now a weak support $s$ in $G[X]$.
Let $L_1$ be the set of leaves of $G[X]$ within the distance at most $2$
from $s$ and let $L_2$ be the set of shadowed leaves of $G$
within the distance $2$ from $L_1 \cup \{s\}$.
Then the set $D=D_\c  \setminus (L_1 \cup L_2  \cup \{s\})$
is a certified dominating set of $G$ and $|D|<|D_\c|$, a contradiction.
\\[2mm]
\underline{Case $3$}: {\em $|X| \ge 3$ and $\gamma_\cer(G[X]) \le |X|-2$} (as the case $\gamma_\cer(G[X]) = |X|-1$ is impossible).
Let $D_X$ be a $\gamma_\cer$-set of $G[X]$ and let $\overline{D}_X = X \setminus D_X$.
Let $L_3$ be the set of shadowed leaves within the distance $2$
from $\overline{D}_X$.
Then the set $D=D_\c \setminus (\overline{D}_X \cup L_3)$
is a certified dominating set of $G$ and $|D|<|D_\c|$,
a contradiction.
\\[2mm]
(b) A non-leaf neighbour of a shadowed weak support $s \in  D_\c$
is either illuminated or shadowed. If $s$ is shadowed,
then, since it is not a leaf, it must be a weak support by (a).
\end{proof}

\begin{thm}\label{thm:gammacer_Ge}
If $G$ is a connected graph of order $n \ge 2$, then $\gamma_\cer(G+e) \le \gamma_\cer(G)$.
\end{thm}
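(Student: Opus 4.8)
The plan is to start with a $\gamma_\cer$-set $D_\c$ of $G$ and, for each added edge $e=xy$, to transform $D_\c$ into a certified dominating set of $G+e$ of size at most $|D_\c|$. The only way $D_\c$ can fail to be certified in $G+e$ is that some vertex of $D_\c$ becomes half-shadowed: this can happen only to a vertex that was shadowed in $G$ but now has exactly one neighbour outside $D_\c$ in $G+e$. Concretely, the problematic vertices are precisely those shadowed $v\in D_\c$ for which exactly one of $x,y$ equals $v$ while the other lies outside $D_\c$ and is adjacent to no other vertex of $D_\c$-complement; more carefully, adding $e$ moves an endpoint out of, or into, the complement only in the sense of changing degrees, so I would do a short case analysis on where $x$ and $y$ sit with respect to $D_\c$. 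In the only genuinely new situation, one endpoint, say $x$, is a shadowed vertex of $D_\c$ and $y\notin D_\c$, so now $x$ has exactly one neighbour ($y$) outside $D_\c$ in $G+e$, i.e. $x$ is half-shadowed in $G+e$.

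To repair this, I invoke Lemma~5.\ref{lem:three_properties}. By part~(a), the newly half-shadowed vertex $x$ is a weak support or a leaf of $G$. If $x$ is a leaf of $G$, then its unique $G$-neighbour $\supp_G(x)$ lies in $D_\c$ (Observation~3.\ref{obs:support}); since $x$ was shadowed in $G$, this forces $n=\deg$-considerations that I would dispatch directly (in fact $x$ being a shadowed leaf means its support has all neighbours in $D_\c$ except possibly not — I would simply note $D_\c\setminus\{x\}\cup\{y\}$ or an even simpler swap works, or handle the tiny cases $n\le 3$ by the elementary-class observations). The main case is $x$ a shadowed weak support with leaf $\leaf_G(x)\in V_G\setminus D_\c$. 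Now in $G+e$ the vertex $x$ has the two complement-neighbours $\leaf_G(x)$ and $y$, so $x$ is already illuminated in $G+e$ — wait, that means $D_\c$ is still certified. So the truly delicate configuration is when adding $e$ destroys certification by a mechanism not covered above; tracking this is the main obstacle, and Lemma~5.\ref{lem:three_properties}(b) is exactly the tool: if a shadowed weak support $s$ becomes half-shadowed, then its relevant non-leaf neighbour is illuminated or again a shadowed weak support, and I would push a local modification along such a chain, removing a bounded set of shadowed leaves and replacing a support by its leaf, exactly as in the repeated constructions of the Lemma~5.\ref{lem:three_properties} proof, to obtain a certified dominating set of $G+e$ of size $\le |D_\c|$.

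Thus the skeleton is: (1) reduce to connected graphs of order $\ge 3$ (Section~\ref{sec:Pre} handles $n\le 3$); (2) classify the position of the endpoints of $e$ relative to $D_\c$, discarding the cases where $D_\c$ is trivially still certified in $G+e$; (3) in each remaining case, use Lemma~5.\ref{lem:three_properties}(a) to identify the newly half-shadowed vertex as a weak support or leaf, then use part~(b) to walk to an illuminated anchor, and perform the swap ``support $\rightarrow$ its leaf'' together with deleting the now-unneeded shadowed leaves, yielding $\gamma_\cer(G+e)\le|D_\c|=\gamma_\cer(G)$. The hard part is the bookkeeping in step~(3): ensuring the local modification simultaneously keeps domination, removes all half-shadowed vertices created by $e$, and does not create new ones — but this is precisely the kind of argument already carried out in the proof of Lemma~5.\ref{lem:three_properties}, so I would lean on that machinery rather than redo it from scratch.
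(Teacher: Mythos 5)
Your skeleton is the same as the paper's: fix a $\gamma_\cer$-set $D_\c$, observe that the only problematic case is a shadowed $w\in D_\c$ acquiring the new neighbour $v\notin D_\c$, invoke Lemma~6.\ref{lem:three_properties}(a) to see that $w$ is a weak support or a leaf, and repair locally. But the execution contains a genuine error that derails the argument. In your ``main case'' ($w$ a shadowed weak support) you place $\leaf_G(w)$ in $V_G\setminus D_\c$ and conclude that $w$ is already illuminated in $G+e$, so that nothing needs repair; you then go hunting for some further ``truly delicate configuration.'' This is backwards: $w$ being \emph{shadowed} with respect to $D_\c$ means by definition that every neighbour of $w$, including $\leaf_G(w)$, lies in $D_\c$. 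So $w$ genuinely becomes half-shadowed in $G+e$, and the repair is the one-line move you talked yourself out of: delete $\leaf_G(w)$ from $D_\c$. Then $w$ has the two outside neighbours $\leaf_G(w)$ and $v$, the deleted leaf is still dominated by $w$, and no other vertex changes status since $w$ is the leaf's only neighbour. There is no residual delicate configuration to chase.

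The leaf case is also not ``tiny,'' and the swap $(D_\c\setminus\{w\})\cup\{v\}$ you suggest does not work: since $w$ is a shadowed leaf, its support $\supp_G(w)$ lies in $D_\c$ and, by minimality of $D_\c$, is itself a shadowed weak support; moving $w$ out of the set leaves $\supp_G(w)$ with exactly one outside neighbour, i.e.\ half-shadowed, and there is no reason for the newly inserted $v$ to be certified either. This is precisely where Lemma~6.\ref{lem:three_properties}(b) is needed: the correct repair deletes $\supp_G(w)$ together with all shadowed leaves within distance $2$ of it, and part (b) guarantees that every non-leaf neighbour of $\supp_G(w)$ is illuminated or a shadowed weak support, so $\supp_G(w)$ remains dominated and no new half-shadowed vertices are created. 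Deferring this to ``the machinery of the Lemma'' without carrying out the bookkeeping is exactly the missing content of the proof.
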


\begin{proof}
One can verify the validity of the theorem for graphs of order at most $n \le 4$.
So assume $n \ge 5$ and let $D_\c$ be a $\gamma_\cer$-set of $G$.

Let $e=vw$, $v,w \in V_G$,
be the added edge to $G$. If both $v,w \in D_\c$,
then $D_\c$ is also a certified dominating set of the graph $G+e$.
Similarly, if either both $v, w \notin D_\c$,
or $v \notin D_\c$ and $w \in D_\c$ is illuminated,
or $v \in D_\c$ is illuminated and $w \notin D_\c$,
then $D_\c$ is a~certified dominating set of $G+e$ as well.
Therefore, in all aforementioned cases,
we have $\gamma_\cer(G+e) \le |D| = \gamma_\cer(G)$ as required.

Without loss of generality assume now that $v \notin D_\c$ and $w \in D_\c$ is shadowed (the case $w \notin D_\c$ and $v\in D_\c$ is shadowed can be analysed in a similar way).
By Lemma~6.\ref{lem:three_properties}(a), $w$~is either a weak support or a leaf of $G$.
\\[2mm]
\underline{Case $1$}: {\em $w$ is a weak support of $G$}.
Then the set $D = D_\c \setminus \{\leaf_G(w)\}$ is a certified
dominating set of $G+e$, and thus, $\gamma_\cer(G+e) \le |D| < |D_\c|= \gamma_\cer(G)$.
%
\\[2mm]
\underline{Case $2$}: {\em $w$ is a leaf of $G$}. By the choice of $D_\c$,
it follows that the support vertex $\supp_G(w)$ is weak and shadowed.
Therefore, by Lemma~6.\ref{lem:three_properties}(b),
every non-leaf neighbour of the weak support $\supp_G(w)$ in $G$
is either an illuminated vertex or a shadowed weak support of $G$.
Let $L$ be the set of shadowed leaves within the distance $2$ from $\supp_G(w)$ in $G$.
Then, the set $D = D_\c \setminus (L \cup \{\supp_G(w)\})$
is a certified dominating set in $G+e$,
and hence $\gamma_\cer(G+e) \le |D| \le  |D_\c| -1 < \gamma_\cer(G)$.
\end{proof}

\begin{figure}[!h]
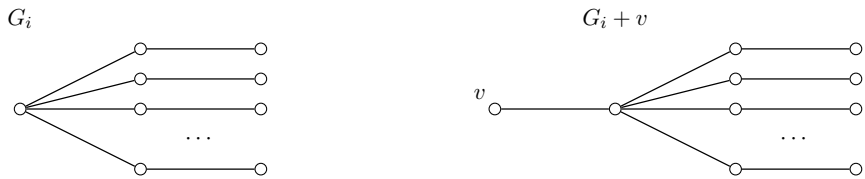
\begin{center}

\pspicture(2,-0.2)(9.8,1.84)\scalebox{0.8}{

\cnode[linewidth=0.5pt,fillstyle=solid,fillcolor=white,linecolor=black](2,1){3pt}{v3}
\cnode[linewidth=0.5pt,fillstyle=solid,fillcolor=white,linecolor=black](4,0){3pt}{l11}
\cnode[linewidth=0.5pt,fillstyle=solid,fillcolor=white,linecolor=black](6,0){3pt}{l12}
\cnode[linewidth=0.5pt,fillstyle=solid,fillcolor=white,linecolor=black](4,1){3pt}{l21}
\cnode[linewidth=0.5pt,fillstyle=solid,fillcolor=white,linecolor=black](6,1){3pt}{l22}
\cnode[linewidth=0.5pt,fillstyle=solid,fillcolor=white,linecolor=black](4,1.5){3pt}{l31}
\cnode[linewidth=0.5pt,fillstyle=solid,fillcolor=white,linecolor=black](6,1.5){3pt}{l32}
\cnode[linewidth=0.5pt,fillstyle=solid,fillcolor=white,linecolor=black](4,2){3pt}{l41}
\cnode[linewidth=0.5pt,fillstyle=solid,fillcolor=white,linecolor=black](6,2){3pt}{l42}

\ncline[linewidth=0.6pt]{v3}{l11}
\ncline[linewidth=0.6pt]{l12}{l11}
\ncline[linewidth=0.6pt]{v3}{l21}
\ncline[linewidth=0.6pt]{l22}{l21}
\ncline[linewidth=0.6pt]{v3}{l31}
\ncline[linewidth=0.6pt]{l32}{l31}
\ncline[linewidth=0.6pt]{v3}{l41}
\ncline[linewidth=0.6pt]{l42}{l41}

\rput(5,.5){\small $\cdots$}

\rput(2,2.5){\small $G_i$}

}
\endpspicture
\pspicture(2,-0.2)(4.8,1.84)\scalebox{0.8}{

\cnode[linewidth=0.5pt,fillstyle=solid,fillcolor=white,linecolor=black](0,1){3pt}{v1}
\cnode[linewidth=0.5pt,fillstyle=solid,fillcolor=white,linecolor=black](2,1){3pt}{v3}
\cnode[linewidth=0.5pt,fillstyle=solid,fillcolor=white,linecolor=black](4,0){3pt}{l11}
\cnode[linewidth=0.5pt,fillstyle=solid,fillcolor=white,linecolor=black](6,0){3pt}{l12}
\cnode[linewidth=0.5pt,fillstyle=solid,fillcolor=white,linecolor=black](4,1){3pt}{l21}
\cnode[linewidth=0.5pt,fillstyle=solid,fillcolor=white,linecolor=black](6,1){3pt}{l22}
\cnode[linewidth=0.5pt,fillstyle=solid,fillcolor=white,linecolor=black](4,1.5){3pt}{l31}
\cnode[linewidth=0.5pt,fillstyle=solid,fillcolor=white,linecolor=black](6,1.5){3pt}{l32}
\cnode[linewidth=0.5pt,fillstyle=solid,fillcolor=white,linecolor=black](4,2){3pt}{l41}
\cnode[linewidth=0.5pt,fillstyle=solid,fillcolor=white,linecolor=black](6,2){3pt}{l42}

\ncline[linewidth=0.6pt]{v3}{v1}
\ncline[linewidth=0.6pt]{v3}{l11}
\ncline[linewidth=0.6pt]{l12}{l11}
\ncline[linewidth=0.6pt]{v3}{l21}
\ncline[linewidth=0.6pt]{l22}{l21}
\ncline[linewidth=0.6pt]{v3}{l31}
\ncline[linewidth=0.6pt]{l32}{l31}
\ncline[linewidth=0.6pt]{v3}{l41}
\ncline[linewidth=0.6pt]{l42}{l41}

\rput(-0.25,1.25){\small $v$}
\rput(5,.5){\small $\cdots$}

\rput(2,2.5){\small $G_i+v$}

}
\endpspicture
\caption{Graph $G_i$ has $2i+1$ vertices, and $\gamma_\cer(G_i)= i$, while $\gamma_\cer(G_i +v)= 2i+2$.}\label{fig:vertex_adding_in}
\end{center}
\end{figure}

As regards adding a vertex, we claim that it may arbitrarily increase the certified domination number
which is not the case as in the model of classic domination.
Indeed, for the graph $G_i$ of order $2i+1$ depicted in Fig.~\ref{fig:vertex_adding_in}, we have $\gamma_\cer(G_i)= i$, while $\gamma_\cer(G_i +v)= 2i+2$.
However, bearing in mind Corollary~4.\ref{wn:gammacer_is_gamma1},
one can expect that the clue of the above construction lies in adding a leaf.
Indeed, this is the case since one can prove that adding a non-leaf vertex
does not effect the certified domination number significantly.
Namely, we have the following theorem.
\begin{thm}\label{thm:gammacer_Gv_deg2}
If we add a non-leaf vertex $v$ to a graph $G$, then $\gamma_\cer(G+v) \le \gamma_\cer(G)+1$.
\end{thm}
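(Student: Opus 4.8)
The plan is to start from a $\gamma_\cer$-set $D_\c$ of $G$ and build a certified dominating set of $G+v$ of size at most $|D_\c|+1$. Let $N_{G+v}(v)=A$ be the neighbourhood of the new vertex $v$; since $v$ is not a leaf we have $|A|\ge 2$. The first case to dispose of is when $A\cap D_\c\neq\emptyset$: pick any $a\in A\cap D_\c$. If we add $v$ to $D_\c$, then $v$ has at least one neighbour, namely $a$, in the new dominating set $D_\c\cup\{v\}$, but $v$ itself may be half-shadowed (exactly one neighbour outside). To repair this I would instead consider $D_\c$ itself as a candidate dominating set of $G+v$: $v$ is dominated by $a$, and every old vertex is still dominated. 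The only thing that can go wrong is that some $x\in A\cap D_\c$ was previously illuminated/shadowed but now has one fewer neighbour outside — but adding $v$ can only \emph{increase} a vertex's number of neighbours outside $D_\c$ (if $v\notin D_\c$) or leave it unchanged (if $v\in D_\c$, which it is not here since $v$ is new). So in fact, if $D_\c$ dominates $G+v$, the only obstruction to $D_\c$ being certified in $G+v$ is that $v\notin D_\c$ might be the \emph{only} outside-neighbour making some $x\in D_\c$ illuminated — but that can't happen since $v$ is newly added and $x$'s old outside-neighbours are untouched. Hence if $v$ has a neighbour in $D_\c$, then $D_\c$ is already a certified dominating set of $G+v$ and $\gamma_\cer(G+v)\le\gamma_\cer(G)\le\gamma_\cer(G)+1$.

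The remaining case is $A\cap D_\c=\emptyset$, i.e. all neighbours of $v$ lie in $V_G\setminus D_\c$. Now $v$ is not dominated by $D_\c$, so I must add at least one vertex. Form $D=D_\c\cup\{v\}$. Then $|D|=|D_\c|+1$, $D$ dominates $G+v$, and I claim $D$ is certified. Indeed, $v$ has all of $A\subseteq V_G\setminus D_\c=V_{G+v}\setminus D$ as neighbours outside $D$, so $v$ has $|A|\ge 2$ neighbours outside $D$ and is illuminated. For an old vertex $x\in D_\c$: its neighbours within $G$ are unchanged, and if $v\in A$ then $x$ gains $v$ as a neighbour \emph{inside} $D$, so $x$'s count of outside-neighbours is unchanged; if $v\notin N(x)$ nothing changes. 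Therefore $x$ has the same number of neighbours in $V_{G+v}\setminus D$ as it had in $V_G\setminus D_\c$, namely $0$ or $\ge 2$. Hence $D$ is a certified dominating set of $G+v$ of size $|D_\c|+1$, giving $\gamma_\cer(G+v)\le\gamma_\cer(G)+1$.

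The main subtlety — and the place I would be most careful in writing this up — is the verification that no old vertex of $D_\c$ becomes \emph{half-shadowed} in $G+v$. The point is exactly that adding the single new vertex $v$ can only change an old vertex's outside-neighbour count by at most one, and it changes it \emph{upward} only when $v$ is placed outside the set while being adjacent to that vertex; in both cases we handle (either $v\in D$, or $v$ has no neighbour in $D_\c$ so the relevant old vertices keep $v$ outside but were already counted without $v$), the count stays in the allowed range $\{0\}\cup\{2,3,\dots\}$. I would also note explicitly that $v$ being a non-leaf is used \emph{only} to guarantee $|A|\ge 2$, which is what makes $v$ illuminated in the second case — if $v$ were a leaf, adding it would force its unique neighbour into the set and could cascade, exactly the phenomenon exhibited by the family $G_i$ in Fig.~\ref{fig:vertex_adding_in}.
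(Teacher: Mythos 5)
Your Case 2 (all neighbours of $v$ lie outside $D_\c$) is correct, but Case 1 contains a genuine gap that is in fact the entire difficulty of this theorem. You claim that if $v$ has a neighbour in $D_\c$, then $D_\c$ itself remains a certified dominating set of $G+v$, arguing that ``adding $v$ can only increase a vertex's number of neighbours outside $D_\c$.'' That is exactly the problem: a \emph{shadowed} vertex $w\in D_\c$ (zero neighbours outside $D_\c$ in $G$) that happens to be adjacent to the new vertex $v$ acquires exactly one neighbour outside $D_\c$ in $G+v$, namely $v$, and thus becomes half-shadowed; $D_\c$ is then no longer certified. This is not a vacuous worry: minimum certified dominating sets routinely contain shadowed vertices (e.g.\ $D_\c=V_G$ for $G=K_2$ or for any corona, by Theorem~5.\ref{thm:corona}). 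Concretely, take $G=K_2$ with $D_\c=V_G$ and attach $v$ to both vertices: each old vertex now has exactly one neighbour outside $D_\c$, so your candidate set fails. The bound still holds there, but your argument does not establish it.

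Repairing this case is where the paper's proof does its real work. It isolates the bad subcase ($v$ adjacent to a shadowed $w\in D_\c$) and invokes Lemma~6.\ref{lem:three_properties}, which says that any shadowed vertex of a $\gamma_\cer$-set is a weak support or a leaf, and that the non-leaf neighbours of a shadowed weak support are illuminated or shadowed weak supports. With that structure in hand one can \emph{delete} vertices from $D_\c$ (the leaf of $w$ if $w$ is a weak support; or $\supp_G(w)$ together with nearby shadowed leaves if $w$ is a leaf) and check the result is still certified in $G+v$, yielding $\gamma_\cer(G+v)\le\gamma_\cer(G)$ in those subcases. Without an argument of this kind your Case 1 is incomplete, so the proposal as written does not prove the theorem.
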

\begin{proof}
Let $D_\c$ be a $\gamma_\cer$-set of a graph $G$ and let $v$ be a new added vertex.
\\[2mm]
\underline{Case $1$}: {\em $\deg_{G+v}(v)=2$}. Let $u$ and $w$ be
the two neighbours of $v$ in $G+v$. If either $v,w \notin D_\c$ or both $v,w \in D_\c$,
then the set $D_\c \cup \{v\}$ is a certified dominating set of $G+v$, and
thus $\gamma_\cer(G+v) \le \gamma_\cer(G)+1$.
Otherwise, without loss of generality, we consider two subcases.
\\[2mm]
\underline{Subcase $1$.a}: {\em  $u \notin D_\c$, $w \in D_\c$, and $w$ is illuminated.}
Then the set $D_\c$ remains a certified dominating set of $G+v$,
and in this case, $\gamma_\cer(G+v) \le \gamma_\cer(G)$ holds.
\\[2mm]
\underline{Subcase $1$.b}: {\em  $u \notin D_\c$, $w \in D_\c$, and $w$ is shadowed.}
If  the vertex $w$ constitutes a $1$-vertex component of $G$,
then  the set $(D_\c \setminus \{w\}) \cup \{v\}$
 is  a certified dominating set in $G+v$, thus getting $\gamma_\cer(G+v) \le \gamma_\cer(G)+1$.
Otherwise, by Lemma~6.\ref{lem:three_properties}(a),
$w$ is either a weak support or a leaf of $G$.
Now, similarly as in the proof of Theorem~6.\ref{thm:gammacer_Ge},
we consider two subcases.
\\[2mm]
\underline{Subcase $1$.b.$1$}: {\em  $w$ is a weak support of $G$}.
(We emphasize that this subcase includes the case when $w$ and the $\leaf_G(w)$
constitute a $2$-vertex component of $G$.)
Then the set $D_\c \setminus \{\leaf_G(w)\}$ is a certified
dominating set in $G+v$. In this case, $\gamma_\cer(G+v) \le \gamma_\cer(G)-1$ holds.
\\[2mm]
\underline{Subcase $1$.b.$2$}: {\em $w$ is a leaf of $G$, and
$w$ together with the support vertex} $\supp_G(w)$ {\em does not constitute a $2$-vertex component of $G$}.
By the choice of $D_\c$,
the support vertex $\supp_G(w)$ is weak and shadowed.
By Lemma~6.\ref{lem:three_properties}(b), every non-leaf neighbour of $\supp_G(w)$ in $G$
is either an illuminated vertex or a shadowed weak support of $G$.
Again, let $L$ be the set of shadowed leaves within the distance $2$ from $\supp_G(w)$ in $G$.
Then, the set $D = D_\c \setminus (L \cup \{\supp_G(w)\})$
is a~certified dominating set in $G+v$.
In this case, $\gamma_\cer(G+v) \le \gamma_\cer(G)-1$.
\\[2mm]
\underline{Case $2$}: {\em $\deg_{G+v}(v) \ge 3$}. Then, when adding $v$ to $G$, we first add only two
edges, thus obtaining a temporary graph $G'$, where $\deg_{G'}(v)=2$.
Now, taking into account Case 1, we conclude that $\gamma_\cer(G') \le \gamma_\cer(G)+1$.
Next, when adding all the remaining edges to $G'$, sequentially, to obtain the final graph $G+v$,
we apply Theorem~6.\ref{thm:gammacer_Ge}, sequentially, for each of added edge,
thus getting $\gamma_\cer(G+v) \le \gamma_{\cer}(G') \le \gamma_\cer(G)+1$ as required.
\end{proof}

\section{Nordhaus-Gaddum type results}\label{sec:NG}
Following the precursory paper of Nordhaus and Gaddum~\cite{NG56},
the literature has became abundant in
inequalities of a similar type for many graph invariants,
see a recent survey by~Aouchiche and Hansen~\cite{AH13}.
In particular, the following result is known for the domination number.
\begin{thm}\label{thm:ALL}{\em\cite{B76,CH77,JP72}}
If $\overline{G}$ is the complement of a graph $G$ of order $n$, then:
\begin{itemize} \item[a)] $\gamma(G)\,  \gamma(\overline{G}) \le n$;
\item[b)] $\gamma(G) +\gamma(\overline{G}) \le n+1$ with equality if and only
if $G = K_n$ or $\overline{G} =  K_n$.\end{itemize}\end{thm}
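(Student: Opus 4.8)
The plan is to treat the two inequalities separately, in both cases splitting according to the elementary dichotomy that $\gamma(H)=1$ exactly when $H$ has a universal vertex, equivalently when $\overline H$ has an isolated vertex. Throughout I would use the classical Ore bound $\gamma(H)\le |V_H|/2$ for graphs $H$ without isolated vertices (already invoked in the proof of Lemma~5.\ref{lem:arbitrary}), together with the trivial fact that $\gamma(H)=|V_H|$ holds precisely when $H$ is edgeless.

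For part (b): if both $\gamma(G)\ge 2$ and $\gamma(\overline G)\ge 2$, then neither $G$ nor $\overline G$ has a universal vertex, hence neither has an isolated vertex, and applying the Ore bound to each gives $\gamma(G)+\gamma(\overline G)\le \tfrac n2+\tfrac n2=n<n+1$, so equality is impossible in this case. Otherwise one of the two numbers equals $1$, say $\gamma(G)=1$ (for $n\ge 2$ both cannot equal $1$ simultaneously, since a universal vertex and an isolated vertex cannot coexist in the same graph). Then $\gamma(G)+\gamma(\overline G)=1+\gamma(\overline G)\le 1+n$, and equality forces $\gamma(\overline G)=n$, i.e.\ $\overline G$ edgeless, i.e.\ $G=K_n$; the symmetric case yields $\overline G=K_n$. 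This establishes the bound and its equality characterisation at once.

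For part (a): if $\gamma(G)=1$ or $\gamma(\overline G)=1$ the product equals the other factor, which is at most $n$, so I may assume $\gamma(G),\gamma(\overline G)\ge 2$, whence neither $G$ nor $\overline G$ has an isolated vertex. Writing $k=\gamma(G)$ and fixing a minimum dominating set $D=\{v_1,\dots,v_k\}$, I would assign every vertex $u$ to some $v_i$ with $u\in N_G[v_i]$ (and each $v_i$ to itself), obtaining a partition $V_G=V_1\cup\dots\cup V_k$ with $v_i\in V_i\subseteq N_G[v_i]$ and $\sum_i|V_i|=n$, so the smallest part $V_m$ satisfies $|V_m|\le n/k$. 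The goal is to show that a suitable $V_m$ dominates $\overline G$, for then $\gamma(\overline G)\le |V_m|\le n/\gamma(G)$, which rearranges to the claimed product bound. Here I would use the reformulation that a set $S$ dominates $\overline G$ if and only if no vertex outside $S$ is $G$-adjacent to every vertex of $S$; thus $V_m$ fails to dominate $\overline G$ exactly when some $u\notin V_m$ is a common $G$-neighbour of all of $V_m$. If such a $u$ already lies in $D$, then that dominator alone covers $V_m$ and $D\setminus\{v_m\}$ is a smaller dominating set of $G$, contradicting the minimality of $D$.

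The main obstacle is the remaining case, where the common neighbour $u$ lies outside $D$ (say $u\in V_j$): replacing $v_m$ by $u$ keeps a minimum dominating set but merely moves $u$ into the small part, so a naive swap need not make progress. I expect to resolve this by an extremal choice of $D$ together with the assignment --- for instance minimising a potential such as $\sum_i|V_i|^2$ --- and then showing that an outside common neighbour always permits a potential-decreasing modification. The genuinely delicate point is controlling the tie cases $|V_j|\in\{|V_m|,|V_m|+1\}$, where the swap is potential-neutral or increasing, so the careful bookkeeping concentrates there. I would keep the self-complementary $3\times 3$ rook's graph $K_3\,\square\,K_3$ (with $n=9$ and $\gamma(G)=\gamma(\overline G)=3$, so the product equals $n$) as the guiding tight example: it shows both that the bound is sharp and that no shortcut of the form ``$\min\{\gamma(G),\gamma(\overline G)\}\le 2$'' can hold, so the partition-with-potential argument really is needed; the graph $C_4$, with $\overline{C_4}=2K_2$ and product $4=n$, is the small companion instance of equality.
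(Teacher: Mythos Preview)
The paper does not prove this theorem at all: it is quoted as a known result with citations \cite{B76,CH77,JP72}, so there is no ``paper's own proof'' to compare against. I can therefore only assess your proposal on its own merits.

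Your argument for part~(b) is complete and correct: the Ore bound applied to both $G$ and $\overline G$ (once you have ruled out isolated vertices via the $\gamma\ge 2$ hypothesis) gives the strict inequality, and the residual case $\min\{\gamma(G),\gamma(\overline G)\}=1$ is handled cleanly, including the equality characterisation.

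Part (a), however, is not a proof but a plan with an acknowledged gap, and the gap is real. In the problematic case $u\notin D$, $u\in V_j$, transferring $u$ from $V_j$ to $V_m$ is a legal move, but when $|V_j|=|V_m|+1$ it leaves the multiset of part sizes (and hence the potential $\sum_i|V_i|^2$ you propose, and indeed any functional of that multiset) \emph{unchanged}: the sizes $s$ and $s+1$ simply swap roles. So neither ``minimise $\sum_i|V_i|^2$'' nor ``lexicographically minimise the sorted size vector'' detects progress here, and nothing prevents the procedure from cycling. You flag exactly this case as ``genuinely delicate'' and say the bookkeeping ``concentrates there,'' but you do not actually do that bookkeeping; you only ``expect'' it to work. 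That is the missing idea. To close the gap along these lines one needs an auxiliary invariant that strictly decreases even in the $|V_j|=s+1$ case (for instance something that tracks more than the part sizes, such as which vertices sit in minimum parts), or an entirely different argument for $\gamma(G)\,\gamma(\overline G)\le n$. As written, part~(a) is incomplete.
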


Further sharpening of bounds was done for the case when, for example,
both $G$ and $\overline{G}$ are connected~\cite{LP85}
or for graphs with specified minimum degree~\cite{DHH05}, to mention but a~few. In particular, the following theorem was proved in~\cite{JA95}.

\begin{thm}\label{thm:NOISOLATES}{\em\cite{JA95}}
If a graph $G$ is a graph of order $n$
and neither $G$ nor $\overline{G}$
has an isolated vertex, that is $1 \le \delta(G) \le n-2$,
then $$\gamma(G)+\gamma(\overline{G})\le \left\lfloor\frac{n}{2}\right\rfloor + 2.$$
Moreover, if $n \neq 9$, the bound is attained if and only if\/
$\{\gamma(G), \gamma(\overline{G})\} =
\{\left\lfloor\frac{n}{2}\right\rfloor,2\}$.
\end{thm}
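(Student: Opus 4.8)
\noindent\emph{Proof plan.}
The hypotheses force $n\ge 4$, and for the smallest orders (say $n\le 7$) the inequality is verified by inspection, so assume $n$ is large. Throughout I would use three standard facts about $\gamma$: \emph{(i)} Ore's inequality $\gamma(H)\le\lfloor |V_H|/2\rfloor$ for every isolate-free graph $H$, together with the classical description of the graphs attaining it (for even order, exactly $C_4$ and the coronas, with the analogous list of families for odd order); \emph{(ii)} $\gamma(H)\le |V_H|-\Delta(H)$, realised by a vertex of maximum degree together with the complement of its closed neighbourhood, which applied to $\overline G$ gives $\gamma(\overline G)\le n-\Delta(\overline G)=\delta(G)+1$ and symmetrically $\gamma(G)\le\delta(\overline G)+1=n-\Delta(G)$; and \emph{(iii)} minimum-degree bounds — McCuaig--Shepherd's $\gamma(H)\le\tfrac{2}{5}|V_H|$ for connected $H$ with $\delta(H)\ge 2$ outside a finite list of small graphs, together with the greedy estimate $\gamma(H)\le |V_H|\,\frac{1+\ln(\delta(H)+1)}{\delta(H)+1}$. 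As both $G$ and $\overline G$ are isolate-free, \emph{(i)} and \emph{(iii)} apply to each.

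The first step is a chain of easy reductions. If $\delta(G)=1$, let $v$ be a vertex with $\deg_{\overline G}(v)=n-2$: its unique non-neighbour in $\overline G$ is the $G$-neighbour of $v$, so adjoining that vertex to $\{v\}$ dominates $\overline G$ and $\gamma(\overline G)=2$, while $\gamma(G)\le\lfloor n/2\rfloor$ finishes the case. If $G$ is disconnected, then in $\overline G$ every vertex of one component is adjacent to every vertex of another, so one vertex from each of two components dominates $\overline G$ and again $\gamma(\overline G)=2$. By the $G\leftrightarrow\overline G$ symmetry of the statement, the cases $\Delta(G)=n-2$ and "$\overline G$ disconnected" are handled identically. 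Hence I may assume that $G$ and $\overline G$ are both connected with $\delta(G),\delta(\overline G)\ge 2$, i.e.\ $2\le\delta(G)\le n-3$, and — interchanging $G$ and $\overline G$ if necessary — that $\delta(G)\le\delta(\overline G)$, so that both minimum degrees are at least $d:=\delta(G)\ge 2$.

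In this core situation I would argue by a dichotomy on $d$. If $d$ is large — say $d\ge 16$, so that $\frac{1+\ln(d+1)}{d+1}<\tfrac14$ — then, since $G$ and $\overline G$ both have minimum degree at least $d$, the greedy bound in \emph{(iii)} gives $\gamma(G),\gamma(\overline G)<\frac{n}{4}$, and so $\gamma(G)+\gamma(\overline G)<\lfloor n/2\rfloor+2$. If $d$ is bounded, then $\gamma(\overline G)\le d+1$ is bounded, while by \emph{(iii)} (and its refinements for $\delta\ge 3$) a connected graph with $\delta\ge 2$ satisfies $\gamma\le\tfrac{2}{5}|V|$ apart from finitely many small graphs; thus once $n$ is large enough $\gamma(G)\le\tfrac{2}{5}n\le\lfloor n/2\rfloor-(d+1)$, and again $\gamma(G)+\gamma(\overline G)\le\lfloor n/2\rfloor$. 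The genuinely delicate part — and what I expect to be the main obstacle — is the still-unbounded residual range of orders $n$ in the bounded-$d$ subcase: there one has to combine $\gamma(\overline G)\le\min\{d+1,\lfloor n/2\rfloor\}$ with the best available minimum-degree bound on $\gamma(G)$ and with the extremal description of \emph{(i)}, and account carefully for the short exceptional lists, and it is exactly this bookkeeping that forces one anomalous order.

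Finally, the equality characterisation is obtained by running each inequality backwards. After the reductions, a tight instance forces one of $\gamma(G),\gamma(\overline G)$ to equal $2$ and the other to equal $\lfloor n/2\rfloor$; the graphs with $\gamma=\lfloor |V|/2\rfloor$ are classified by \emph{(i)} and its odd-order counterpart — essentially the coronas of graphs on $\lfloor n/2\rfloor$ vertices, the disjoint unions of copies of $C_4$, and, for odd $n$, a few further explicit families — and for each of them $\{\gamma(G),\gamma(\overline G)\}=\{\lfloor n/2\rfloor,2\}$; conversely this condition is readily seen to be sufficient. The lone exception is $n=9$, where a further, self-complementary graph on $9$ vertices with domination number $3$ on both $G$ and $\overline G$ also attains the bound — and this is precisely why the order $9$ must be excluded from the characterisation.
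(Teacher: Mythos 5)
The paper does not prove this statement at all: Theorem~7.\ref{thm:NOISOLATES} is imported verbatim from the cited reference \cite{JA95} and is used as a black box, so there is no in-paper argument to compare yours against. Judged on its own terms, your proposal is an outline with a genuine, self-acknowledged hole rather than a proof. Your opening reductions are sound (if $\delta(G)=1$ or $G$ is disconnected then $\gamma(\overline G)=2$ and Ore's bound on $G$ finishes; by symmetry one reduces to both graphs connected with minimum degree at least $2$), and the two extremes of your dichotomy on $d=\delta(G)$ are fine: for $d\ge 16$ the greedy bound gives $\gamma(G)+\gamma(\overline G)<n/2$, and for bounded $d$ and sufficiently large $n$ the combination $\gamma(\overline G)\le d+1$ with $\gamma(G)\le \tfrac{2}{5}n$ works. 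But the residual regime --- $2\le d\le 15$ and $n$ up to roughly $10(d+1)$, i.e.\ potentially all orders up to about $140$ --- is exactly where the theorem is delicate, and you dispose of it with ``it is exactly this bookkeeping that forces one anomalous order.'' That is not an argument; it is the statement of the remaining problem. A finite case range of that size cannot be ``verified by inspection,'' and nothing in your sketch explains why the only failure of strict inequality in the core case occurs at $n=9$.

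The equality characterisation has the same defect. You assert that a tight instance ``forces'' $\{\gamma(G),\gamma(\overline G)\}=\{\lfloor n/2\rfloor,2\}$, but this requires tracing tightness through every branch of the inequality proof --- in particular showing that in the connected, $\delta\ge 2$ core the sum is strictly below $\lfloor n/2\rfloor+2$ for all $n\ne 9$ --- and that is precisely the part you have not supplied. (Your identification of the $n=9$ exception as a self-complementary graph with $\gamma(G)=\gamma(\overline G)=3$, e.g.\ the $3\times 3$ rook's graph, is correct and reassuring, but it is an example, not the required classification.) In short: the strategy is reasonable and the easy cases are handled correctly, but the central quantitative step and the extremal analysis are both missing, so the proposal does not establish the theorem.
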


In this section we provide some Nordhaus-Gaddum
type inequalities for the certified domination number.
First, taking into account Corollary~4.\ref{wn:gammacer_is_gamma1},
Theorem~7.\ref{thm:ALL} and Theorem~7.\ref{thm:NOISOLATES},
we obtain the following corollary.

\begin{wn}\label{wn:NG52}
If $G$ is a graph of order $n$ and $\min\{\delta(G),\delta(\overline{G})\} \ge 2$, then
$$\gamma_\cer(G)+\gamma_\cer(\overline{G})\le \left\lfloor\frac{n}{2}\right\rfloor + 2 \ \ \ \textnormal{and} \ \ \  \gamma_\cer(G) \, \gamma_\cer(\overline{G})\leq\ n.$$
\end{wn}

By enumerating all graphs of order at most $4$,
we obtain the following observation.

\begin{obs}\label{lem:NG234}
Let $G$ be a graph of order $n$. Then:
\begin{itemize}
\item[$a)$]  $\gamma_\cer(G)+\gamma_\cer(\overline{G}) = \gamma_\cer(G)\, \gamma_\cer(\overline{G}) = 4$ if $n=2$;
\item[$b)$] $\gamma_\cer(G)+\gamma_\cer(\overline{G}) = 4$ and $\gamma_\cer(G)\, \gamma_\cer(\overline{G}) = 3$ if $n=3$;
\item[$c)$]
$\big(\gamma_\cer(G)+\gamma_\cer(\overline{G}), \gamma_\cer(G)\, \gamma_\cer(\overline{G})\big) \in \{(3,2),(5,4),(6,6),(8,16)\}$ if $n=4$.
\end{itemize}
\end{obs}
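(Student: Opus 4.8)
The plan is to argue by brute-force enumeration organized by the order $n$, since for $n \le 4$ there are only finitely many graphs up to isomorphism and the certified domination number of each can be read off from the elementary observations in Section~\ref{sec:Pre}. First I would observe that $\gamma_\cer$ is isomorphism-invariant, so for each $n \in \{2,3,4\}$ it suffices to run through the (few) isomorphism types, pair each $G$ with its complement $\overline{G}$, and record $\gamma_\cer(G)+\gamma_\cer(\overline{G})$ and $\gamma_\cer(G)\,\gamma_\cer(\overline{G})$.

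For $n=2$, the only graphs are $K_2$ and $\overline{K_2}$; these are complementary to each other, and $\gamma_\cer(K_2)=\gamma_\cer(\overline{K_2})=2$ (by Observation~2.\ref{obs:Kn} and the fact that $V_G$ is always certified), whence both the sum and product equal $4$, giving part~$a)$. For $n=3$, the isomorphism types are $K_3$, $P_3=K_{1,2}$, $\overline{K_3}$ and $K_1\cup K_2$; here $\overline{K_3}$ and $K_3$ are complementary, as are $P_3$ and $K_1\cup K_2$. Using Observation~2.\ref{obs:Kn} ($\gamma_\cer(K_3)=1$), the fact that $\gamma_\cer(\overline{K_3})=3$, Observation~2.\ref{obs:Pn} ($\gamma_\cer(P_3)=1$), and Observation~2.\ref{obs:many_compo} together with Observation~2.\ref{obs:Kn} ($\gamma_\cer(K_1\cup K_2)=1+2=3$), both complementary pairs give sum $1+3=4$ and product $1\cdot 3=3$, establishing part~$b)$. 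For $n=4$, I would list all eleven graphs on four vertices: $\overline{K_4}$, $K_2\cup\overline{K_2}$, $2K_2$, $P_3\cup K_1$, $P_4$, $K_{1,3}$, $C_4$, the "paw" $K_3$ with a pendant, the diamond $K_4-e$, and $K_4$ (plus $K_3 \cup K_1$), pair them by complementation, and compute $\gamma_\cer$ of each from the observations of Section~\ref{sec:Pre} (notably $\gamma_\cer(P_4)=4$, $\gamma_\cer(C_4)=2$, $\gamma_\cer(K_4)=1$, $\gamma_\cer(\overline{K_4})=4$, and for graphs with a universal vertex $\gamma_\cer=1$ by Observation~2.\ref{obs:universal}), reading off the four admissible pairs $(3,2)$, $(5,4)$, $(6,6)$, $(8,16)$ as claimed in part~$c)$.

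The only real obstacle is bookkeeping: for $n=4$ one must make sure the list of isomorphism types is complete and that each complementary pair is correctly identified (for instance, $C_4$ is self-complementary, $P_4$ is self-complementary, $K_{1,3}$ and $K_3\cup K_1$ are complementary, the paw and $K_2\cup\overline{K_2}$ are complementary, and the diamond and $2K_2$ are complementary). Once the pairing is correct, every value of $\gamma_\cer$ needed is supplied directly by the observations already proved, so no further argument is required.
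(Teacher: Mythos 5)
Your overall strategy --- exhaust the finitely many isomorphism types for $n\le 4$, pair each graph with its complement, and read off $\gamma_\cer$ from the observations of Section~2 --- is exactly what the paper does (it offers no proof beyond ``by enumerating all graphs of order at most $4$''), and your $n=2$ and $n=3$ cases are correct. The problem is the complementation bookkeeping for $n=4$, which is wrong in three places and, as written, cannot even be completed. The complement of $C_4$ is $2K_2$ (four edges versus two), so $C_4$ is \emph{not} self-complementary; the complement of the paw is $P_3\cup K_1$, not $K_2\cup\overline{K_2}$; and the complement of the diamond is $K_2\cup\overline{K_2}$, not $2K_2$. Indeed, with two alleged self-complementary graphs among the eleven types you would be left with nine graphs to match into complementary pairs, which is impossible, and your list leaves $P_3\cup K_1$ unmatched. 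Moreover, ``$C_4$ self-complementary'' would contribute the value pair $(4,4)$, which is not in the claimed set, so the computation as you set it up does not close.

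When the pairing is corrected ($K_4\leftrightarrow\overline{K_4}$, diamond $\leftrightarrow K_2\cup\overline{K_2}$, paw $\leftrightarrow P_3\cup K_1$, $K_{1,3}\leftrightarrow K_3\cup K_1$, $C_4\leftrightarrow 2K_2$, and $P_4$ self-complementary), the enumeration gives the value pairs $(5,4)$, $(5,4)$, $(3,2)$, $(3,2)$, $(6,8)$ and $(8,16)$ respectively, using $\gamma_\cer(C_4)=2$ and $\gamma_\cer(2K_2)=2+2=4$. Note that this produces $(6,8)$, not the $(6,6)$ printed in part $c)$; in fact $(6,6)$ is arithmetically impossible, since $x+y=6$ and $xy=6$ has no solution in positive integers. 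So a careful enumeration not only repairs your pairing but also shows that the observation as stated contains a typo: the entry $(6,6)$ should read $(6,8)$.
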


Next, we have the following theorem.

\begin{thm}\label{thm:NG50} If $G$ is a graph of order $n \ge 3$ and $\min\{\delta(G),
 \delta(\overline{G})\}=0$, then  $$\gamma_\cer(G)+\gamma_\cer( \overline{G})\leq n+1 \ \ \ \textnormal{and} \ \ \  \gamma_\cer(G) \, \gamma_\cer(\overline{G})\leq n.$$
In addition, if  $\min\{\delta(G),  \delta(\overline{G})\}=0$, then each of the above upper bounds is attainable, and the following statements are equivalent: \begin{itemize}
\item[$a)$] $\gamma_\cer(G)+\gamma_\cer(\overline{G})=n+1$; \item[$b)$]
$\gamma_\cer(G)\, \gamma_\cer(\overline{G})=n$;
\item[$c)$] $G$ or $\overline{G}$ is the complement of $K_n$ or the union of
the corona of some graph and a~positive number of isolated vertices.
    \end{itemize}\end{thm}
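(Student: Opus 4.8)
The plan is to establish the two inequalities first and then prove the equivalence of the three extremal conditions by a cycle of implications, exploiting that $\min\{\delta(G),\delta(\overline G)\}=0$ forces a very rigid structure. First I would record the elementary fact that $\delta(G)=0$ means $G$ has an isolated vertex $v$, hence $v$ is universal in $\overline G$; by Observation~2.\ref{obs:universal} this gives $\gamma_\cer(\overline G)=1$ whenever $n\ge 3$. Symmetrically, $\delta(\overline G)=0$ forces $\gamma_\cer(G)=1$. So, after possibly swapping $G$ and $\overline G$, we may assume $\gamma_\cer(\overline G)=1$, and the two claimed bounds reduce to $\gamma_\cer(G)\le n$ and $\gamma_\cer(G)\le n$ — both of which are just the universal bound $\gamma_\cer(G)\le |V_G|$ noted in the introduction. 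This disposes of the inequalities and simultaneously shows that, under $\min\{\delta(G),\delta(\overline G)\}=0$, both sums and products are controlled entirely by the single graph whose complement has the isolated vertex.

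For the equivalence, with $\gamma_\cer(\overline G)=1$ fixed, statement (a) becomes $\gamma_\cer(G)=n$, statement (b) becomes $\gamma_\cer(G)=n$ as well, and statement (c) (in the relevant orientation) becomes: $G$ is the complement of $K_n$ or the union of the corona of some graph with a positive number of isolated vertices. So (a)$\Leftrightarrow$(b) is immediate, and it remains to identify exactly when $\gamma_\cer(G)=n$, given that $G$ itself has an isolated vertex (this is the "positive number of isolated vertices" clause, and it is also what $\delta(G)=0$ would give in the other orientation — I would handle the case $\delta(G)=\delta(\overline G)=0$ by noting it forces $n$ small or $G\cong \overline{K_n}$, a quick check). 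The main tool here is Theorem~5.\ref{thm:corona}, which already characterises all graphs with $\gamma_\cer=n$ as disjoint unions of $\overline{K_m}$'s and coronas of graphs. Thus $\gamma_\cer(G)=n$ iff every component of $G$ is an isolated vertex or a corona; since $G$ has at least one isolated vertex, this is precisely "$G=\overline{K_n}$, or $G$ is the union of the corona of some graph and a positive number of isolated vertices" — which is condition (c). This closes the cycle (a)$\Rightarrow$(b)$\Rightarrow$(c)$\Rightarrow$(a), the last implication being Lemma~5.\ref{lem:coronas-n} together with Observation~2.\ref{obs:many_compo} (a corona plus isolated vertices has $\gamma_\cer$ equal to its order, and so does $\overline{K_n}$).

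Finally, for attainability I would simply exhibit an example: take $G = (K_2\circ K_1)\cup \overline{K_1}$ or, even more transparently, $G = K_{1,n-2}$ for the complement side — here $\overline G$ has an isolated vertex and $\gamma_\cer$ of the other graph equals its order, so both bounds are met. Actually the cleanest witness is $G = \overline{K_{n-1}}\cup\{$one more vertex joined to nothing$\}=\overline{K_n}$ itself, whose complement $K_n$ has $\delta=n-1$ — wait, that has $\delta(\overline G)=n-1\ne 0$; so instead I would take $\overline G = \overline{K_n}$, i.e.\ $G=K_n$, giving $\gamma_\cer(G)=1$, $\gamma_\cer(\overline G)=n$, sum $n+1$, product $n$. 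I expect the only genuinely fiddly point to be the bookkeeping when \emph{both} $G$ and $\overline G$ have an isolated vertex simultaneously (possible only for very small $n$, or when one of them is edgeless), which must be reconciled with Observation~7.\ref{lem:NG234} for $n\le 4$ and otherwise ruled out or absorbed into the $\overline{K_n}$ case; everything else follows mechanically from Theorem~5.\ref{thm:corona} and Observation~2.\ref{obs:universal}.
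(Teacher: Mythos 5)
Your proposal is correct and follows essentially the same route as the paper: an isolated vertex in one graph yields a universal vertex in the complement (so that side contributes $\gamma_\cer=1$ by Observation~2.\ref{obs:universal}), the trivial bound $\gamma_\cer\le n$ gives both inequalities, and Theorem~5.\ref{thm:corona} translates $\gamma_\cer(G)=n$ into condition $(c)$. The only superfluous worry is the case where both $G$ and $\overline{G}$ have isolated vertices, which in fact cannot occur for $n\ge 2$.
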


\begin{proof} From the assumption  $\min\{\delta(G), \delta(\overline{G} )\}=0$ it follows that $\max\{\Delta(\overline{G}), \Delta(G)\}=n-1$ and, therefore,
$\gamma_\cer(\overline{G})=1$ or $\gamma_\cer({G})=1$. Now, since $\gamma_\cer({G})
\le n$ and $\gamma_\cer(\overline{G})\le n$, we get $\gamma_\cer(G)+\gamma_\cer( \overline{G})\leq n+1$ and $\gamma_\cer(G)\, \gamma_\cer( \overline{G})\leq n$.

Assume now that $\min\{\delta(G), \delta(\overline{G} )\}=0$, say $\delta(G)=0$.
Then $\Delta(\overline{G})=n-1$, and so $\gamma_\cer(\overline{G})=1$. Now,
since $\gamma_\cer(\overline{G})=1$, it follows from each of the equalities $\gamma_\cer(G)+\gamma_\cer(\overline{G})=n+1$ and $\gamma_\cer(G) \, \gamma_\cer(\overline{G})=n$ that $\gamma_\cer({G})=n$. Finally, since
$\delta(G)=0$ and  $\gamma_\cer({G})=n$, we conclude from Theorem~5.\ref{thm:corona} that $G$
is the complement of $\overline{K}_n$ or the union of the corona of some graph and a~positive number of isolated vertices. This proves the implications $a) \Rightarrow
c)$ and $b)\Rightarrow c)$. Opposite implications are straightforward. \end{proof}

Finally, we have the following theorem.

\begin{thm} If $G$ is a graph of order $n\ge 5$,
then \[\gamma_\cer(G)+\gamma_\cer(\overline{G})\le n+2
\quad \mbox{and}\quad \gamma_\cer(G)\, \gamma_\cer(\overline{G})\le 2n.\] In addition, each of the above upper bounds is attainable, and the following statements are equivalent: \begin{itemize}
\item[$a)$] $\gamma_\cer(G)+\gamma_\cer(\overline{G})=n+2$; \item[$b)$]
$\gamma_\cer(G)\, \gamma_\cer(\overline{G})=2n$;
\item[$c)$] $G$ or $\overline{G}$ is the corona of some graph. \end{itemize}\end{thm}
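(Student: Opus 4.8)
The plan is to reduce everything to the characterisation of graphs with $\gamma_\cer$ equal to the order (Theorem~5.\ref{thm:corona}) together with the Nordhaus--Gaddum machinery already developed in Theorem~7.\ref{thm:NG50} and Corollary~7.\ref{wn:NG52}. First I would dispose of the case $\min\{\delta(G),\delta(\overline G)\}=0$ using Theorem~7.\ref{thm:NG50}: there the sum is at most $n+1<n+2$ and the product at most $n<2n$, so neither bound can be met, and in that case $G$ or $\overline G$ is the complement of a complete graph or the union of a corona with isolated vertices---in particular, if such a graph is itself to be "the corona of some graph" (which has no isolated vertex), this case contributes nothing to the equality cases and is consistent with the claimed equivalence. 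Hence for the rest of the argument I may assume $\min\{\delta(G),\delta(\overline G)\}\ge 1$.

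Next I would split on whether $\min\{\delta(G),\delta(\overline G)\}\ge 2$ or exactly one of $G,\overline G$ has a vertex of degree one. In the first subcase Corollary~7.\ref{wn:NG52} gives the much stronger bounds $\gamma_\cer(G)+\gamma_\cer(\overline G)\le\lfloor n/2\rfloor+2$ and $\gamma_\cer(G)\gamma_\cer(\overline G)\le n$, both comfortably below $n+2$ and $2n$ for $n\ge 5$, so again equality is impossible and no corona arises (a corona of a graph on $\ge 1$ vertex has leaves, so $\delta=1$). So the only way to approach the claimed bounds is the subcase $\delta(G)=1$ and $\delta(\overline G)\ge 1$ (or symmetrically). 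Here I would argue that $\gamma_\cer(\overline G)$ is small---more precisely, a graph whose complement has minimum degree $1$ has two vertices $u,v$ with $u$ non-adjacent to all but one vertex; this forces $\overline G$ to have a dominating vertex or a near-dominating configuration, giving $\gamma_\cer(\overline G)\le 2$. Combined with $\gamma_\cer(G)\le n$ this yields $\gamma_\cer(G)+\gamma_\cer(\overline G)\le n+2$ and $\gamma_\cer(G)\gamma_\cer(\overline G)\le 2n$.

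For the equivalences, the implications $(a)\Rightarrow$ (the above forces $\gamma_\cer(G)=n$ and $\gamma_\cer(\overline G)=2$) and $(b)\Rightarrow$ (same) are the heart of the matter: once we know $\gamma_\cer(G)=n$, Theorem~5.\ref{thm:corona} tells us $G$ is the complement of a complete graph, a corona, or a union of both; the extra hypothesis that $\delta(G)=1$ rules out isolated vertices and the empty-graph summand, so $G$ is exactly the corona of some (possibly disconnected) graph, giving $(c)$. The implication $(c)\Rightarrow(a),(b)$ is then a direct computation: if $G=H\circ K_1$ then $\gamma_\cer(G)=n$ by Lemma~5.\ref{lem:coronas-n} (extended over components via Observation~2.\ref{obs:many_compo}), and one checks directly that $\overline G$ has a vertex of large degree but no universal vertex while $\delta(\overline G)=n-2\ge 2$ forces, via Observation~2.\ref{obs:universal} and a short argument, $\gamma_\cer(\overline G)=2$; hence the sum is $n+2$ and the product is $2n$. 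Attainability follows by exhibiting any single corona, e.g.\ $G=P_2\circ K_1$ scaled up.

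The main obstacle I anticipate is the delicate bookkeeping in the subcase $\delta(G)=1$, $\delta(\overline G)\ge 1$: pinning down exactly when $\gamma_\cer(\overline G)=2$ versus $\gamma_\cer(\overline G)=1$, and verifying that the complement of a corona really does have certified domination number $2$ (not $1$), since a corona on $\ge 2$ vertices has no universal vertex in its complement only if its complement also has no universal vertex---this needs the structure of coronas (each vertex is a leaf or adjacent to exactly one leaf) to conclude no vertex of $\overline G$ dominates. I would handle this by a case analysis on small coronas ($H=K_1$, i.e.\ $G=K_2$, is the degenerate case where $\overline G=\overline{K_2}$ has $\gamma_\cer=2$ anyway) and then the general structural observation for $|V_H|\ge 2$.
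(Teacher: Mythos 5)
Your overall architecture matches the paper's: trichotomy on $\min\{\delta(G),\delta(\overline{G})\}$, with the cases $0$ and $\ge 2$ dispatched by Theorem~7.\ref{thm:NG50} and Corollary~7.\ref{wn:NG52}, and the equality cases funnelled through Theorem~5.\ref{thm:corona}. However, there is a genuine gap exactly where you anticipate trouble: in the case $\min\{\delta(G),\delta(\overline{G})\}=1$ you assert that $\delta(G)=1$ forces $\gamma_\cer(\overline{G})\le 2$ because $\overline{G}$ has a near-universal vertex. That assertion is false. Take $n=6$ with vertices $l,s,t,a,b,c$, where $s$ is adjacent to everything except $t$, $l$ is a leaf attached to $s$, $t$ is adjacent to $a,b,c$, and $a,b,c$ form a triangle. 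Then $\delta(G)=1$ and $\delta(\overline{G})=1$, and $\overline{G}$ is a spider with centre $l$ (three pendant legs $a,b,c$ and one leg $l$--$t$--$s$ of length two); every certified dominating set of $\overline{G}$ must contain the supports $l$ and $t$, and then $t$ is half-shadowed unless $s$ is added, so $\gamma_\cer(\overline{G})=3$. A vertex of degree $n-2$ does not by itself yield a $2$-element certified dominating set, because the one undominated vertex can force a half-shadowed partner.

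What is actually true, and what the paper proves, is the disjunction $\gamma_\cer(G)=2$ \emph{or} $\gamma_\cer(\overline{G})=2$, and establishing it requires a further case split that your sketch does not contain. With $l$ a leaf of $G$ and $s=\supp_G(l)$: if $\deg_G(s)\le n-3$, then $\{l,s\}$ is certified dominating in $\overline{G}$; if $\deg_G(s)=n-2$ with unique non-neighbour $t$, then for $\deg_G(t)\ge 2$ the certified $2$-set is $\{s,t\}$ \emph{in $G$ itself} (in my example above this is the only option), while for $\deg_G(t)=1$ the set $\{l,t\}$ works in $\overline{G}$. Since either bound being attained forces one of $\gamma_\cer(G),\gamma_\cer(\overline{G})$ to equal $n$ and the other to equal $2$, and coronas are exactly the graphs without isolated vertices achieving $\gamma_\cer=n$, the equivalence of $a)$, $b)$, $c)$ then goes through as you outline; your converse direction $(c)\Rightarrow(a),(b)$ is essentially the paper's (modulo the slip where you write $\delta(\overline{G})=n-2$ for what is the degree of the leaf in the complement). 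So the proposal is a correct plan for everything except the one step that carries the weight of the theorem, and that step needs to be replaced by the disjunctive case analysis above.
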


\begin{proof}
If $\min\{\delta(G),\delta(\overline{G})\} \ge 2$, then
$\gamma_\cer(G)+\gamma_\cer(\overline{G})\le \left\lfloor\frac{n}{2}\right\rfloor + 2 \le n+2$ and $\gamma_\cer(G) \, \gamma_\cer(\overline{G})$
$\leq n \le 2n$ by Corollary~7.\ref{wn:NG52}. 
If $\min\{\delta(G),\delta(\overline{G})\} = 0$, then
$\gamma_\cer(G)+\gamma_\cer(\overline{G})\le n + 1 \le n+2$ and $\gamma_\cer(G) \, \gamma_\cer(\overline{G})\leq n \le 2n$ by Theorem~7.\ref{thm:NG50}.

Thus assume $\min\{\delta(G), \delta(\overline{G})\}=1$. Then $\max\{\Delta(G), \Delta(\overline{G})\}=n-2$. This also implies that
$\gamma_\cer(G)>1$ and $\gamma_\cer( \overline{G})>1$. Thus, since $\gamma_\cer(G)\le n$ and $\gamma_\cer( \overline{G})\le n$, it suffices to show that $\gamma_\cer(G)= 2$ or $\gamma_\cer( \overline{G}) = 2$. Without loss of generality assume that $\delta(G)=1$. Let $l$ be a leaf of $G$ and let $s$ be the only element of $N_G(l)$. We consider two cases: $\deg_G(s)=n-2$, and $\deg_G(s)\le n-3$.
\\[2mm]
\underline{Case 1}: {\em  $\deg_G(s)=n-2$}. Let $t$ be the only element of $V_G\setminus N_G[s]$.
Assume first that $d_G(t)\ge 2$. Let $u$ and $w$ be two neighbours of $t$ (and $s$).
Now, because $N_G[\{s,t\}]= N_G[s]\cup N_G[t]= (V_G\setminus \{t\})\cup N_G[t]=V_G$,
$\{u,w\}\subseteq N_G(s)\cap (V_G\setminus\{s,t\})$, $\{u,w\}\subseteq N_G(t)\cap (V_G\setminus\{s,t\})$, and $\gamma_\cer(G)>1$, we conclude that $\{s, t\}$ is a minimum certified dominating set of ${G}$, and $\gamma_\cer(G)=2$. Assume now that
$d_G(t)=1$. In this case, let $u$ and $w$ be vertices such that $N_G(t)=\{u\}$ and
$w\in N_G(s)\setminus \{l,u\}$. Since $N_{\overline{G}}[\{l,t\}]= N_{\overline{G}}[l]\cup N_{\overline{G}}[t]= (V_G\setminus \{s\})\cup (V_G\setminus \{u\})= V_G$,  the set $\{l,t \}$ is dominating in $\overline{G}$. In addition, since $\{u, w\}\subseteq N_{\overline{G}}(l) \cap (V_G\setminus \{l,t\})$ and   $\{s, w\}\subseteq N_{\overline{G}}(t) \cap (V_G\setminus \{l,t\})$, the set $\{l,t \}$ is certified dominating in $\overline{G}$. From this and from the fact that $\gamma_\cer(\overline{G})>1$
it follows that $\gamma_\cer(\overline{G}) = 2$.
\\[2mm]
\underline{Case 2}: {\em  $\deg_G(s)\le n-3$}.
Let $t$ and $u$ be two elements of the
set $V_G\setminus N_G[s]$. In this case, $\{l, s\}$ is a certified dominating set of $\overline{G}$, since $N_{\overline{G}}[\{l, s\}]=V_G$,
$\{t,u\}\subseteq N_{\overline{G}}(l) \cap (V_G\setminus \{l,s\})$, and
$\{t,u\}\subseteq N_{\overline{G}}(s) \cap (V_G\setminus \{l,s\})$. From this it
again follows that $\gamma_\cer(\overline{G}) = 2$.
\\[2mm]
\indent We now prove the equivalence of a), b), and c). Let $G$ be a graph of order $n\ge 5$
such that $\gamma_\cer(G)+\gamma_\cer(\overline{G})=n+2$ ($\gamma_\cer(G)\, \gamma_\cer(\overline{G})=2n$, respectively). From this assumption, from Corollary~7.\ref{wn:NG52} and Theorem~7.\ref{thm:NG50} it follows that $\min\{\delta(G), \delta(\overline{G})\}=1$. Then, as we have already proved, $\gamma_\cer(G)= 2$ or $\gamma_\cer( \overline{G}) = 2$, and therefore  $\gamma_\cer( \overline{G}) = n$ or $\gamma_\cer(G)= n$, respectively. From this and from Theorem~5.\ref{thm:corona} it follows that $\overline{G}$ or $G$ is the corona of some graph.
Thus, we have proved the implications $a)\Rightarrow c)$ and $b)\Rightarrow c)$.
Finally, assume that $G$ is the corona of some graph and $G$ is of order $n\ge 5$. Then $\gamma_\cer(G)= n$ by Theorem~3.\ref{thm:corona}. From the fact that the corona has no
isolated vertex, it follows that $\gamma_\cer( \overline{G})>1$. Now, since $\delta(G)=1$, as in Case 2, we get $\gamma_\cer( \overline{G})=2$. Consequently,  $\gamma_\cer(G)+\gamma_\cer(\overline{G}) = n+2$  and $\gamma_\cer(G) \, \gamma_\cer(\overline{G}) = 2n$. This proves the implications  $c)\Rightarrow a)$ and $c)\Rightarrow b)$. \end{proof}

\section{The complexity status}\label{sec:NP-hardness}
When introducing a new model of domination,
no discussion would be complete without mentioning
its complexity status. Define the (decision) certified domination problem
as follows.

\bigskip\noindent
\begin{tabularx}{\textwidth}{lX}
  \toprule
  \multicolumn{2}{l}{The certified domination problem}\\
  \midrule
  \bfseries{Instance}: & A graph $G$ and an integer $k \ge 1$.\\
  \bfseries{Question}: & Does there exist a certified dominating set of $G$
  of cardinality at most $k$?\\
  \bottomrule
\end{tabularx}

\bigskip
As one can expect, the certified domination problem is NP-complete.
Specifically, the certified domination problem remains
NP-complete even if we restrict ourselves to bipartite planar subcubic graphs. The~proof immediately follows from Corollary~4.\ref{wn:gammacer_is_gamma1} and
the fact that the (original) domination problem is NP-complete
in bipartite planar subcubic graphs with no leaves~\cite{KYK80,Su10}.

\begin{thm}\label{thm:NP}
The certified domination problem is NP-complete
in bipartite planar subcubic graphs with no leaves.
\end{thm}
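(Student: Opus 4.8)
The plan is to reduce the ordinary domination problem, restricted to the very same graph class, to the certified domination problem, exploiting Corollary~4.\ref{wn:gammacer_is_gamma1}. The reduction will in fact be the identity map on instances, so the whole argument reduces to two observations: membership in NP, and the coincidence of $\gamma$ and $\gamma_\cer$ on leaf-free graphs.

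First I would verify membership in NP. Given a graph $G$, an integer $k$, and a set $D\subseteq V_G$ offered as a certificate, one checks in polynomial time that $|D|\le k$, that every vertex of $V_G\setminus D$ has a neighbour in $D$, and that every vertex of $D$ has either no neighbour or at least two neighbours in $V_G\setminus D$; if all three checks succeed, $D$ witnesses a ``yes'' instance. Hence the problem lies in NP.

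For NP-hardness I would invoke the fact (from~\cite{KYK80,Su10}) that deciding $\gamma(G)\le k$ is already NP-complete when $G$ is a bipartite planar subcubic graph with no leaves. Take such an instance $(G,k)$. Since $G$ has no leaf, it has no support vertex, so in particular $S_1(G)=\emptyset$; by Corollary~4.\ref{wn:gammacer_is_gamma1} this gives $\gamma_\cer(G)=\gamma(G)$. As every certified dominating set is in particular a dominating set, we get $\gamma(G)\le k$ if and only if $\gamma_\cer(G)\le k$. Thus $(G,k)$ is a ``yes'' instance for domination exactly when it is a ``yes'' instance for certified domination; the map $(G,k)\mapsto (G,k)$ is computable in linear time and keeps us inside the class of bipartite planar subcubic leaf-free graphs. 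Together with membership in NP, this establishes NP-completeness.

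The only delicate point — and the step I expect to carry the actual work — is ensuring that the cited NP-completeness of domination genuinely holds in the \emph{leaf-free} subclass: the standard reductions producing planar subcubic bipartite graphs have to be inspected, and if necessary lightly patched (e.g. by subdividing edges or attaching a small fixed leaf-free gadget to any pendant vertex they create), so that the resulting graph has minimum degree at least two while still being bipartite, planar, subcubic, and having domination number controlled up to a known additive constant. This is bookkeeping rather than a new idea; once it is in place, Corollary~4.\ref{wn:gammacer_is_gamma1} supplies the rest of the argument for free.
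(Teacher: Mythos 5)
Your argument is exactly the paper's: the paper proves this theorem by citing Corollary~4.\ref{wn:gammacer_is_gamma1} together with the NP-completeness of ordinary domination in bipartite planar subcubic leaf-free graphs from~\cite{KYK80,Su10}, so the identity reduction works and membership in NP is immediate. Your closing caveat about double-checking that the cited hardness result really lives in the leaf-free subclass is reasonable diligence, but the paper simply takes that statement as given from the references.
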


Following several algorithmic results for
the domination problem and its variations~\cite{Ch13,HHS98,K13},
one can focus on designing effective (exponential time) algorithms,
faster (polynomial time) algorithms for some restricted graph classes,
or approximation algorithms 
for the variant of certified domination.

\section{Certified domination and $D\!D_2$-graphs}\label{sec:DD2}
Here, we briefly continue our discussion from the introductory section
on the relation between the concepts of certified domination and $D\!D_2$-graphs.
In~\cite{HR13}, the following theorem was established.

\begin{thm}{\em \cite{HR13}}\label{thm:HR13}
A graph of a minimum degree at least two is a $D\!D_2$-graph.
\end{thm}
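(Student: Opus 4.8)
The plan is to produce a $D\!D_2$-pair of the special shape $(D,V_G\setminus D)$, leveraging the fact, already available in the paper, that $\gamma_\cer(G)=\gamma(G)$ whenever $\delta(G)\ge 2$. We may assume $V_G\ne\emptyset$; then $\delta(G)\ge 2$ forces $|V_G|\ge 3$, guarantees that $G$ has no isolated vertex, and (having no leaves) gives $S_1(G)=\emptyset$. By Corollary~4.\ref{wn:gammacer_is_gamma_delta} --- or equivalently by Corollary~3.\ref{wn:gammacer_vs_gamma}, since $|S_1(G)|=0$ --- we have $\gamma_\cer(G)=\gamma(G)$, so we may fix a set $D$ that is at once a $\gamma_\cer$-set of $G$ and a minimum dominating set of $G$.

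The heart of the argument is then the observation that \emph{every} vertex of $D$ is illuminated with respect to $D$, i.e.\ has at least two neighbours in $V_G\setminus D$. This is just the combination of two facts recalled in the paragraph preceding Theorem~3.\ref{thm:gammacer_vs_gamma_con}: a certified dominating set has no half-shadowed vertex, and a minimum dominating set of a graph without isolated vertices has no shadowed vertex. Since $D$ is both a certified dominating set and a minimum dominating set, no vertex of $D$ is shadowed or half-shadowed, hence each is illuminated.

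To finish, I would put $D_2:=V_G\setminus D$ and check the three defining requirements of a $D\!D_2$-pair. The sets $D$ and $D_2$ are disjoint by construction, and $D_2\ne\emptyset$ because $G$ has an edge and therefore $\gamma(G)<|V_G|$; the set $D$ is a dominating set of $G$ since it is certified dominating; and because every vertex of $V_G\setminus D_2=D$ has at least two neighbours in $D_2$, the set $D_2$ is $2$-dominating in $G$ (the ``dominating set'' clause in that definition comes for free from the ``at least two neighbours'' clause). Hence $(D,D_2)$ is a $D\!D_2$-pair and $G$ is a $D\!D_2$-graph.

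I do not expect a genuine obstacle here: once Corollary~4.\ref{wn:gammacer_is_gamma_delta} is in hand, the whole proof is a bookkeeping translation of ``certified $+$ minimum dominating'' into ``the complement is $2$-dominating'', and the only points needing a moment's care are that $D_2$ is nonempty and that the definition of a $2$-dominating set is matched verbatim. If one wished to make the proof self-contained, one could instead take $D$ to be a minimum dominating set minimising the number of half-shadowed vertices and repeat the swap argument from the proof of Theorem~3.\ref{thm:gammacer_vs_gamma_con} (replacing a half-shadowed vertex by its unique neighbour outside $D$); but since $\delta(G)\ge2$ already rules out leaves and supports, this only re-derives the corollary in the case at hand.
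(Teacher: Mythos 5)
Your proof is correct and follows essentially the same route the paper itself takes in Section~9: use $\gamma_\cer(G)=\gamma(G)$ (Corollary~4.\ref{wn:gammacer_is_gamma_delta}) to get a set $D$ that is simultaneously a minimum dominating set (hence with no shadowed vertex) and a certified dominating set (hence with no half-shadowed vertex), so that $V_G\setminus D$ is $2$-dominating and $(D,V_G\setminus D)$ is a $D\!D_2$-pair. This is precisely the argument the paper uses to derive its generalization, Theorem~9.\ref{thm:gammacer_gamma_DD2}, of which the present statement is the special case $\delta(G)\ge 2$.
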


Since any minimal dominating set has no shadowed vertices,
one can observe that if a~graph $G$ has a~minimal dominating set $D$
being also certified 
then its complement $V_G \setminus D$ is a $2$-dominating set of~$G$.
Therefore, taking into account Corollaries~4.\ref{wn:gammacer_is_gamma1} and~4.\ref{wn:gammacer_is_gamma2},
we conclude with the following theorem
which generalizes Theorem~9.\ref{thm:HR13}.

\begin{thm}\label{thm:gammacer_gamma_DD2}
\begin{itemize}
\item[$a)$] A graph of a minimum degree at least one and without weak supports is a $D\!D_2$-graph.
\item[$b)$] A graph of order at least two having a unique dominating set is a $D\!D_2$-graph.
\end{itemize}
Moreover, any such graph in $a)$ or $b)$ has a $(D,D_2)$-pair such that $|D|=\gamma(G)$.\eop
\end{thm}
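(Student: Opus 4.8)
The strategy is to read off both parts directly from the corollaries on graphs with $\gamma_\cer = \gamma$, together with the elementary observation about minimal dominating sets that is recalled just before the theorem statement. The key link is this: if $G$ admits a minimum dominating set $D$ that is also certified, then $D$ is in particular a \emph{minimal} dominating set with no shadowed vertex (minimality forbids shadowed vertices in a graph without isolated vertices, and being certified forbids half-shadowed vertices), so every vertex of $D$ has at least two neighbours in $V_G \setminus D$; equivalently, $V_G \setminus D$ is a $2$-dominating set. Hence the pair $(D, V_G\setminus D)$ is a $D\!D_2$-pair with $|D| = \gamma(G)$, which simultaneously shows that $G$ is a $D\!D_2$-graph and gives the ``moreover'' clause.

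\textbf{Part (a).} Let $G$ have minimum degree at least one and no weak support. Since $\delta(G)\ge 1$, $G$ has no isolated vertex. Since $G$ has no weak support, $S_1(G)=\emptyset$, so by Corollary~4.\ref{wn:gammacer_is_gamma1} we have $\gamma_\cer(G)=\gamma(G)$. Therefore a $\gamma_\cer$-set $D_\c$ of $G$ is a minimum dominating set; being certified and minimum (hence minimal, since there are no isolated vertices) it has neither shadowed nor half-shadowed vertices, so every vertex of $D_\c$ has at least two neighbours in $V_G\setminus D_\c$. Thus $(D_\c, V_G\setminus D_\c)$ is a $D\!D_2$-pair with $|D_\c|=\gamma(G)$.

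\textbf{Part (b).} Let $G$ have order at least two and a unique minimum dominating set $D$. By Corollary~4.\ref{wn:gammacer_is_gamma2}, $\gamma_\cer(G)=\gamma(G)$, so $D$ is a minimum \emph{certified} dominating set; moreover $G$ has no isolated vertex, since an isolated vertex would lie in every dominating set but could be harmlessly exchanged only if it is also a whole component --- more directly, uniqueness of $D$ forces $\delta(G)\ge 1$, because a component that is a single vertex would already have to be in $D$ and the argument still goes through once we note $D$ is minimal. As in part (a), minimality plus the certified property means every vertex of $D$ has at least two neighbours outside $D$, so $V_G\setminus D$ is $2$-dominating and $(D, V_G\setminus D)$ is the desired $D\!D_2$-pair with $|D|=\gamma(G)$.

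\textbf{Anticipated obstacle.} The only subtlety is the bookkeeping around isolated vertices and the precise sense in which ``minimal dominating set with no half-shadowed vertex'' yields a $2$-dominating complement; this is exactly the observation stated (without proof) immediately before the theorem, so no real work is needed --- one just has to invoke it cleanly. A secondary point worth a sentence is that in part (b) one should confirm $G$ genuinely has no isolated vertex so that ``minimum $\Rightarrow$ no shadowed vertex'' applies; if $G$ had an isolated vertex, that vertex would be shadowed in $D$, but then it would be a trivial component and the complement-within-that-component is empty, so the $2$-domination condition is vacuous there and the conclusion still holds. Since each part reduces to one application of the relevant corollary followed by the minimal/certified $\Rightarrow$ $D\!D_2$ observation, the proof is short, and the statement as a whole generalizes Theorem~9.\ref{thm:HR13} because minimum degree at least two implies both no leaves (hence no weak supports) and no isolated vertices.
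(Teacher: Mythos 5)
Your overall strategy is exactly the paper's: the paper gives no separate proof of this theorem, justifying it entirely by the remark preceding it (a minimal dominating set of a graph without isolated vertices has no shadowed vertex, and a certified one has no half-shadowed vertex, so its complement is $2$-dominating) together with Corollaries~4.\ref{wn:gammacer_is_gamma1} and~4.\ref{wn:gammacer_is_gamma2}. Your part (a) carries this out correctly, and the ``moreover'' clause falls out as you say. One small slip there: a minimum dominating set is minimal regardless of isolated vertices; the hypothesis $\delta(G)\ge 1$ is needed for the implication ``minimal $\Rightarrow$ no shadowed vertex,'' not for ``minimum $\Rightarrow$ minimal.''

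In part (b), however, both of your attempts to dispose of isolated vertices are false. Uniqueness of the minimum dominating set does \emph{not} force $\delta(G)\ge 1$: the graph $\overline{K_n}$ has the unique $\gamma$-set $V_G$. And your fallback claim that the $2$-domination condition is ``vacuous'' on a trivial component is wrong in the other direction: an isolated vertex $v$ necessarily lies in $D$, hence in $V_G\setminus D_2$, and the definition of a $2$-dominating set then demands two neighbours of $v$ inside $D_2$, which $v$ cannot have. So $\overline{K_n}$ (order $\ge 2$) is a genuine counterexample to statement (b) as literally written, and no argument can paper over it; the hypothesis must implicitly exclude isolated vertices (the paper itself is silent on this point, so the gap is inherited rather than introduced by you, but your proof asserts two incorrect statements where it should instead either add the hypothesis $\delta(G)\ge 1$ or flag the exception). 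Once isolated vertices are excluded, your argument for (b) is the paper's and is correct: the unique $\gamma$-set coincides with any $\gamma_\cer$-set by Corollary~4.\ref{wn:gammacer_is_gamma2}, hence is certified, minimal, and shadow-free, and its complement is $2$-dominating.
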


\section{Concluding remarks}
Since over the years researchers have published thousands of papers
on the topic of domination in graphs, our paper cannot claim the right
to cover the new model even partially,
it should only be thought of as a very beginning, a small contribution to.
In this section, we present two exemplary open problems
that we find interesting and which research on we feel worth of being continued.

\paragraph{Stable and critical graphs.}
It is natural to characterise the class of {\em critical}  graphs where the certified domination
number increases on the removal of any edge/vertex as well as the class of {\em stable} graphs where the certified domination number remains unchanged on the removal of any edge/vertex.
We point out that by Corollary~4.\ref{wn:gammacer_is_gamma1},
the class of critical (resp.\ stable) (with respect to the certified domination number) graphs
with minimum degree $\delta \ge 3$ is the same as the class of critical (resp.\ stable) graphs
with respect to domination number, see for example~\cite{BCD88,FHG95,MK12}.
Therefore, we are left with characterising critical (resp.\ stable) graphs
with minimum degree $\delta \le 2$. This is an open problem.

\paragraph{Trees with $\gamma_\cer=\gamma$.}
The problem of constructive characterisations of trees with equal
domination parameters has received attention
in the literature, see for example~\cite{C10, H08, LHXL10},
to mention but a few recent. Following this concept,
we leave as an open problem to provide a constructive characterisation of
$(\gamma,\gamma_\cer)$-trees,
that is, the class of trees with $\gamma_\cer=\gamma$.


\begin{thebibliography}{99}

\bibitem{AH13}
M.~Aouchiche and P.~Hansen,
A survey of Nordhaus-Gaddum type relations,
{\em Discrete Applied Mathematics} 161(4-5) (2013), 466--546.

\bibitem{B76}
M.~Borowiecki,
On the external stability number of a graph and its complement,
{\em Prace Naukowe Instytutu Matematyki Politechniki Wroc\l{}awskiej} 12  (1976), 39--43.

\bibitem{BCD88}
R.~C.~Brigham, P.~Z.~Chinn, and R.~D.~Dutton,
Vertex domination-critical graphs,
{\em Networks} 18(3) (1988), 173--179.

\bibitem{BHHH15}
C.~Balbuena, A.~Hansberg, T.~W.~Haynes, and M.~A.~Henning,
Total domination edge critical graphs with total domination number three and many dominating pairs,
{\em Graphs and Combinatorics} 31(5) (2015), 1163--1176.

\bibitem{BVV15}
A.~P.~Burger, A.~P.~de Villiers, and J.~H.~van Vuuren, 
Edge stability in secure graph domination,
{\em Discrete Mathematics and Theoretical Computer Science} 17(1) (2015), 103--122.

\bibitem{Ch13}
G.~J.~Chang, ``Algorithmic aspects of domination in graphs'', 
Handbook of Combinatorial Optimization,
Springer, New York (2013), pp.~221--282.

\bibitem{CFFS12}
X.-G.~Chen, S.~Fujita, M.~Furuya, and M.~Y.~Sohn,
Constructing connected bicritical graphs with edge-connectivity $2$,
{\em Discrete Applied Mathematics} 160(4-5) (2012), 488--493.

\bibitem{CH77}
E.~J.~Cockayne and S.~T.~Hedetniemi, 
Toward a theory of domination in graphs,
{\em Networks} 7  (1977), 247--261.

\bibitem{C10}
J.~Cyman, 
Total outer-connected domination in trees,
{\em Discussiones Mathematicae Graph Theory} 30(3) (2010), 377--383.

\bibitem{Die12}
R.~Diestel, Graph Theory, Springer, Heidelberg, 2012.

\bibitem{DHH05}
J.~E.~Dunbar, T.~W.~Haynes, and S.~T.~Hedetniemi,
Nordhaus-Gaddum bounds for domination sums in graphs with specified minimum degree,
{\em Utilitas Mathematica} 67 (2005), 97--105.

\bibitem{EG12}
M.~Edwards and G.~MacGillivray, 
The diameter of total domination and independent domination vertex-critical graphs,
{\em The Australasian Journal of Combinatorics} 52 (2012), 33--39.

\bibitem{FHG95}
J.~Fulman, D.~Hanson, and G.~MacGillivray,
Vertex domination-critical graphs,
{\em Networks} 25  (1995), 41--43.

\bibitem{FJ85a}
J.~Fink and M. Jacobson, ``$n$-domination in graphs'',
Graph Theory with Applications to Algorithms and Computer Science,
Wiley, New York (1985), pp.~282--300.

\bibitem{FJ85b}
J.~Fink and M. Jacobson: ``On $n$-domination, $n$-dependence and forbidden subgraphs'',
Graph Theory with Applications to Algorithms and Computer Science,
Wiley, New York (1985), pp.~301--311.

\bibitem{FJKR85}
J.~F.~Fink, M.~S.~Jacobson, L.~F.~Kinch, and J.~Roberts,
On graphs having domination number half their order,
{\em Periodica Mathematica Hungarica} 16(4) (1985), 287--293.

\bibitem{HHS98}
T.~W.~Haynes, S.~T.~Hedetniemi, and P.~J.~Slater,
Fundamentals of Domination in Graphs,
Marcel Dekker, New York, 1998.

\bibitem{HHS98b}
T.~W.~Haynes, S.~T.~Hedetniemi, and P.~J.~Slater,
Domination in Graphs: Advanced Topics, 
Marcel Dekker, New York, 1998.

\bibitem{HR13}
M.~A.~Henning and D.~F.~Rall,
On graphs with disjoint dominating and $2$-dominating sets,
{\em Discussiones Mathematicae Graph Theory} 33(1) (2013), 139--146.

\bibitem{H08}
X.~Hou,
A characterization of $(2\gamma,\gamma_p)$-trees,
{\em Discrete Mathematics} 308(15)  (2008), 3420--3426.

\bibitem{JP72}
F.~Jaegar and C.~Payan, 
Relations du type Nordhaus-Gaddum pour le nombre d'absorption d'un graphe simple,
{\em Comptes Rendus de l'Académie des Sciences} (Paris) 274 (1972), 728--730.

\bibitem{JA95}
J.~P.~Joseph and S.~Arumugam, 
Domination in graphs,
{\em International Journal of Management and Systems} 11(2)  (1995), 177--182.

\bibitem{K13}
L.~Kang,
``Variations of dominating set problem'',
Handbook of Combinatorial Optimization,
Springer, New York (2013), pp.~3363--3394.

\bibitem{KYK80}
T.~Kikuno, N.~Yoshida, and Y.~Kakuda,
The NP-completeness of the dominating set problem in cubic planar graphs,
{\em IEICE Transactions} E64(6) (1980), 443--444.

\bibitem{LP85}
R.~Laskar and K.~Peters, 
Vertex and edge domination parameters in graphs,
{\em Congressus Numerantium} 48  (1985), 291--305.

\bibitem{LHXL10}
Y.~Lu, X.~Hou, J.-M.~  Xu,  and N.~Li,
A characterization of $(\gamma_t, \gamma_2)$-trees,
{\em Discussiones Mathematicae Graph Theory} 30(3) (2010), 425--435.

\bibitem{MK12}
Y.~Manimuthu and K.~Kumarasamy, 
Domination Stable Graphs,
LAP Lambert Academic Publishing,  Saarbr\"ucken, 2012.

\bibitem{NG56}
E.~A.~Nordhaus and J.~Gaddum, On complementary graphs,
{\em The American Mathematical Monthly} 63  (1956), 175--177.

\bibitem{Su10}
J.~Suomela: Is the dominating set problem restricted to planar bipartite graphs of maximum degree 3 NP-complete?
(posted by Florent Foucaud). Theoretical Computer Science Stack Exchange:
\texttt{http://cstheory.stackexchange.com/questions/2505} [2016-01-08].

\end{thebibliography}
\end{document}